\def\Ind#1#2{#1\setbox0=\hbox{$#1x$}\kern\wd0\hbox to 0pt{\hss$#1\mid$\hss}
\lower.9\ht0\hbox to 0pt{\hss$#1\smile$\hss}\kern\wd0}
\def\ind{\mathop{\mathpalette\Ind{}}}
\def\Notind#1#2{#1\setbox0=\hbox{$#1x$}\kern\wd0\hbox to 0pt{\mathchardef
\nn="3236\hss$#1\nn$\kern1.4\wd0\hss}\hbox to 0pt{\hss$#1\mid$\hss}\lower.9\ht0
\hbox to 0pt{\hss$#1\smile$\hss}\kern\wd0}
\def\nind{\mathop{\mathpalette\Notind{}}}
\def\cl{\mathrm{cl}}
\def\dcl{\mathrm{dcl}}
\def\acl{\mathrm{acl}}
\def\cb{\mathrm{cb}}
\def\P{\mathbb P}
\def\M{\mathfrak M}
\def\wU{\mathrm{U_\omega}}
\def\U{\mathrm{U}}
\def\Aut{\mathrm{Aut}}
\def\tp{\mathrm{tp}}
\def\k{k}
\def\stp{\mathrm{stp}}
\def\wind{\ind^\omega}
\def\nwind{\nind^\omega}
\theoremstyle{plain}
\newtheorem{theorem}{Theorem}[section]
\newtheorem{prop}[theorem]{Proposition}
\newtheorem{fact}[theorem]{Fact}
\newtheorem{lemma}[theorem]{Lemma}
\newtheorem{cor}[theorem]{Corollary}
\theoremstyle{definition}
\newtheorem{defn}[theorem]{Definition}
\newtheorem{remark}[theorem]{Remark}
\newtheorem{expl}[theorem]{Example}
\newtheorem{quest}{Question}
\def\pf{\par\noindent{\em Proof. }}
\title{On the class of flat stable theories}
\author{Daniel Palac\'in and Saharon Shelah}
\thanks{This paper corresponds to 1133 in Shelah's publication list. \newline Both authors were partially supported by the European Research Council grant 338821. The first author was also partially supported by the project MTM2014-59178-P}
\address{Einstein Institute of Mathematics, The Hebrew University of Jerusalem, Givat Ram 9190401, Jerusalem, Israel}
\email{daniel.palacin@mail.huji.ac.il}
\address{Einstein Institute of Mathematics, The Hebrew University of Jerusalem, Givat Ram 9190401, Jerusalem, Israel \newline \indent Department of Mathematics, Hill Center - Busch Campus,
Rutgers, The State University of New Jersey, 110 Frelinghuysen Road, Piscataway, NJ 08854-8019
USA} 
\email{shelah@math.huji.ac.il}
\keywords{stable theory, strong, weight, regular types}
\subjclass[2000]{03C45}
\begin{document}

\begin{abstract} 
A new notion of independence relation is given and associated to it, the class of flat theories, a subclass of strong stable theories including the superstable ones is introduced. More precisely, after introducing this independence relation, flat theories are defined as an appropriate version of superstability. It is shown that in a flat theory every type has finite weight and therefore flat theories are strong. Furthermore, it is shown that under reasonable conditions any type is non-orthogonal to a regular one. Concerning groups in flat theories, it is shown that type-definable groups behave like superstable ones, since they satisfy the same chain condition on definable subgroups and also admit a normal series of definable subgroup with semi-regular quotients.
\end{abstract}

\maketitle

\section{Introduction}

The notions of forking, orthogonality and regular types, among others, play a fundamental role in understanding the structure of stable theories. These were not only essential to carry out the classification programme, inside stable theories, but also have turned out to be relevant for the developments of geometric stability theory.

A stationary type is {\em regular} if it is orthogonal to all its forking extensions; recall that two stationary types $p$ and $q$ are {\em orthogonal} if, for any set $C$ over which both types are based and any realizations $a\models p|C$ and $b\models q|C$, we have that $a\ind_C b$. Minimal types are the simplest example of regular types, where forking means being algebraic. Similar to minimal ones, regular types carry a notion of geometry associated to their set of realizations, and hence a dimension. 
Their main feature is that any type can be coordinatizated by regular ones, as long as the theory contains enough regular types. Consequently, their associated geometries determine many properties of the theory. 

Formally, the fact that a theory has enough regular types can be rephrased as follows: Every type is non-orthogonal to a regular one. This holds for superstable theories but this property is not exclusive of superstability. Therefore, one may try to find reasonable conditions beyond superstability which yield the existence of enough regular types. In this paper we pursue this line of investigation on an attempt to find some reasonable structure theory beyond superstability.  

We introduce the class of flat theories, a subclass of stable theories which extends superstability, and analyse the existence of regular types in this context. More precisely, in Section 2 we define the notion of $\omega$-forking\footnote{Originally, called gorking by the second author.}, which implies the usual notion of forking, and show that in a stable theory it satisfies the usual properties of independence (see Theorem \ref{T:Main1}), except algebraicity since it can be the trivial relation.  Afterwards, a flat theory is defined as a stable theory where every type does not $\omega$-fork over a finite set. Since non-forking implies non-$\omega$-forking, it follows immediately that a superstable theory is flat. As in the superstable case, a notion of ordinal-valued rank among types, called $\wU$-rank, is available and we point out some of its basic properties, such as the Lascar inequalities. 

In the third section, a more careful analysis of flat theories is carried out. Roughly speaking, we see that any type has a non-$\omega$-forking extension which is non-orthogonal to a regular type. Consequently, every type is close to be non-orthogonal to a regular one, see Theorem \ref{T:Main2}. In particular, if all forking extensions of a type are also $\omega$-forking, then it is non-orthogonal to a regular type. This is Corollary \ref{C:Reg}. Nevertheless, we cannot ensure that in general every type is non-orthogonal to a regular one, but we show that flat theories are strong (Theorem \ref{T:Strong}) and consequently every type is non-orthogonal to a type of weight one. In fact, this holds locally for a flat type under the mere assumption that the theory is stable.

Finally, in the last section groups in flat theories are analysed. We show that any type-definable group in a flat theory looks like a superstable one, in the sense that they satisfy the same descending chain condition on definable subgroups and also admit a semi-regular decomposition. It should be noted that, while the notion of $p$-semi-regularity (also $p$-simplicity) originated in \cite[Chapter V]{SheClas}, here semi-regularity corresponds to a reformulation due to Hrushovski. Hence, in Theorem \ref{T:Group}, by a semi-regular decomposition we mean that every such flat group admits a finite series of normal subgroups such that any generic type of each quotient is domination-equivalent to suitable finite product of some regular type.

\section{A new independence relation}

From now on, we work inside the monster model of a complete stable first-order theory, and we assume that the reader is familiarized with the general theory of stability theory.

\subsection{Skew dividing and $\omega$-forking} We introduce the notion of skew $k$-dividing and $\k$-forking for a natural number $k \ge 1$. 

\begin{defn}
Let $\pi(\bar x)$ be a partial type. It is said to 
skew $k$-divide over $A$ if there is an $A$-indiscernible sequence $(\bar b_n)_{n<\omega}$ and a formula $\varphi(\bar x; \bar y_0,\ldots,\bar y_{\k-1})$ such that $$\pi(\bar x)\vdash \varphi(\bar x;\bar b_0,\bar b_2,\ldots,\bar b_{2(\k-1)}) \mbox{ and  }
\pi(\bar x)\vdash \neg\varphi(\bar x;\bar b_{i_0},\ldots,\bar b_{i_{\k-1}})$$ for any  $i_0<\ldots<i_{\k-1}<2\k$ with $(i_0,i_1,\ldots,i_{\k-1})\neq (0,2,\ldots,2(\k-1)).$
\end{defn}

In fact, in the definition of skew dividing we may allow formulas with parameters.

\begin{remark}\label{RemDivideParam}
A partial type $\pi(\bar x)$ skew $k$-divides over $A$ if and only if there are a formula $\varphi(\bar x; \bar y_0,\ldots,\bar y_{\k-1},\bar z)$, a tuple $\bar c$, and an $A\bar c$-indiscernible sequence $(\bar b_n)_{n<\omega}$  such that $$\pi(\bar x)\vdash \varphi(\bar x;\bar b_0,\bar b_2,\ldots,\bar b_{2(\k-1)},\bar c) \ \mbox{ and  } \
\pi(\bar x)\vdash \neg\varphi(\bar x;\bar b_{i_0},\ldots,\bar b_{i_{\k-1}},\bar c)$$ for any  $i_0<\ldots<i_{\k-1}<2\k$ with $(i_0,i_1,\ldots,i_{\k-1})\neq (0,2,\ldots,2(\k-1)).$
\end{remark}
\pf Left to right is obvious by the definition of skew $k$-dividing. To prove the other direction, assume that the condition holds for $\varphi(\bar x; \bar y_0,\ldots,\bar y_{\k-1},\bar z)$, a tuple $\bar c$, and a sequence $(\bar b_n)_{n<\omega}$. Set $\bar y_i'=\bar y_i\bar z$ and $\bar b'_n=\bar b_n\bar c$. Then the formula $\psi(\bar x;\bar y_0',\ldots,\bar y_{\k-1}')$ defined as $\varphi(\bar x;\bar y_0,\ldots,\bar y_{\k-1},\bar z)$ and the sequence $(\bar b_n')_{n<\omega}$ witness that $\pi(\bar x)$ skew $k$-divides over $A$. \qed

\begin{defn}
A partial type $\pi(\bar x)$ is said to $k$-fork over $A$ if it implies a finite disjunction of formulas, each of them skew $k$-dividing over $A$. 
\end{defn}

In other words, the set of formulas that $k$-fork over $A$ is nothing else than the ideal generated by the formulas that skew $k$-divide over $A$. Furthermore, note that both notions are preserved under automorphisms of the ambient model. 

\begin{remark}\label{R:1} The following holds:
\begin{enumerate} 
 \item If $\pi_1(\bar x)\vdash \pi_2(\bar x)$ and $\pi_2(\bar x)$ skew $k_2$-divides over $A_2$, then $\pi_1(\bar x)$ skew $k_1$-divides over $A_1$ for any $k_1\le k_2$ and $A_1\subseteq A_2$. The same holds for $k$-forking. 
 \item If $\pi(\bar x)$ skew $k$-divides over $A$, then so does some finite subset $\pi_0(\bar x)$ of $\pi(\bar x)$. Similarly for $k$-forking. 
  \item If a partial type $\pi(\bar x)$ skew $k$-divides over $A$, then so does it over $\acl(A)$. 
 \item Extension property. If a partial type $\pi(\bar x)$ over $B$ does not $\k$-fork over $A$, then there is $p(\bar x)\in S(B)$ extending $\pi(\bar x)$ which does not $\k$-fork over $A$.
\end{enumerate}
\end{remark}
\pf We only prove (1) for skew dividing, the rest is standard. Assume that $\varphi=\varphi(\bar x;\bar y_0,\ldots,\bar y_{\k_2-1})$ and $(\bar b_\alpha)_{\alpha<\omega}$ witness that $\pi_2(\bar x)$ skew $k_2$-divides over $A_2$. Now, set $\bar z=\bar y_{\k_1}\ldots \bar y_{\k_2-1}$ and let $\psi(\bar x;\bar y_0,\ldots,\bar y_{\k_1-1},\bar z)=\varphi$.  Then the result follows from Remark \ref{RemDivideParam} by enlarging the sequence $(\bar b_\alpha)_{\alpha<\omega}$ to $(\bar b_\alpha)_{\alpha<\omega+\omega}$ and taking $\bar c=\bar b_\omega\ldots \bar b_{\omega+\k_2-\k_1+1}$. \qed

\begin{lemma}\label{L:k=1}
A partial type $\pi(\bar x)$ does not fork over $A$ if and only if it does not $1$-fork over $A$.
\end{lemma}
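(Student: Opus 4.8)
The plan is to prove the equivalent statement that $\pi(\bar x)$ forks over $A$ if and only if it $1$-forks over $A$. Recall that the formulas forking over $A$ form the ideal generated by the dividing ones, that the formulas $1$-forking over $A$ form the ideal generated by the skew $1$-dividing ones (as observed right after the definition), and that a partial type $k$-forks (resp.\ forks) over $A$ exactly when it implies a finite disjunction of formulas that skew $k$-divide (resp.\ divide) over $A$. Hence it suffices to prove, for a formula $\varphi(\bar x)=\varphi(\bar x;\bar b)$ with parameters $\bar b$: (i) if $\varphi(\bar x;\bar b)$ divides over $A$ then it $1$-forks over $A$; and (ii) if $\varphi(\bar x;\bar b)$ skew $1$-divides over $A$ then it forks over $A$. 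Indeed, by (i) a partial type forking over $A$ implies a finite disjunction of dividing formulas, each of which $1$-forks over $A$; since these form an ideal the disjunction $1$-forks, hence so does the type, and symmetrically (ii) yields the converse.

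For (i), fix an $A$-indiscernible sequence $(\bar b_n)_{n<\omega}$ with $\bar b_0=\bar b$ and $\{\varphi(\bar x;\bar b_n):n<\omega\}$ inconsistent, which exists since $\varphi(\bar x;\bar b)$ divides over $A$. Every realisation of $\varphi(\bar x;\bar b_0)$ must then fail $\varphi(\bar x;\bar b_n)$ for some $n\ge 1$, so $\varphi(\bar x;\bar b_0)$ implies $\bigvee_{n\ge 1}\big(\varphi(\bar x;\bar b_0)\wedge\neg\varphi(\bar x;\bar b_n)\big)$, and by compactness a finite sub-disjunction already suffices. Each conjunct $\varphi(\bar x;\bar b_0)\wedge\neg\varphi(\bar x;\bar b_n)$ skew $1$-divides over $A$ for a trivial reason: since $(\bar b_0,\bar b_1)\equiv_A(\bar b_0,\bar b_n)$, an automorphism fixing $A$ carries $(\bar b_m)_{m<\omega}$ to an $A$-indiscernible sequence $(\bar c_m)_{m<\omega}$ with $\bar c_0=\bar b_0$ and $\bar c_1=\bar b_n$, and $\varphi(\bar x;\bar b_0)\wedge\neg\varphi(\bar x;\bar b_n)$ plainly implies $\varphi(\bar x;\bar c_0)$ and $\neg\varphi(\bar x;\bar c_1)$. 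I expect this is the only spot calling for the right idea: one should \emph{not} try to show the dividing formula $\varphi(\bar x;\bar b_0)$ itself skew $1$-divides over $A$ — that would force $k$-inconsistency down to $2$-inconsistency, which need not be possible — but instead break it up into the trivially skew $1$-dividing pieces above. This step uses nothing about stability.

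For (ii), apply Remark \ref{RemDivideParam} with $k=1$ to obtain a formula $\psi(\bar x;\bar y;\bar z)$, a tuple $\bar c$, and an $A\bar c$-indiscernible sequence $(\bar b_n)_{n<\omega}$ with $\varphi(\bar x;\bar b)\vdash\psi(\bar x;\bar b_0;\bar c)$ and $\varphi(\bar x;\bar b)\vdash\neg\psi(\bar x;\bar b_1;\bar c)$. Then $\varphi(\bar x;\bar b)$ implies the formula $\theta(\bar x):=\psi(\bar x;\bar b_0;\bar c)\wedge\neg\psi(\bar x;\bar b_1;\bar c)$, so it is enough to see that $\theta$ divides over $A$. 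I would deduce this from the $A$-indiscernible sequence $(\bar b_{2n}\bar b_{2n+1}\bar c)_{n<\omega}$ — it is $A$-indiscernible because $(\bar b_n)_n$ is $A\bar c$-indiscernible — by noting that a common realisation of all the associated instances $\psi(\bar x;\bar b_{2n};\bar c)\wedge\neg\psi(\bar x;\bar b_{2n+1};\bar c)$ of $\theta$ would make the truth value of $\psi(\,\cdot\,;\bar b_n;\bar c)$ alternate infinitely often along the indiscernible sequence $(\bar b_n)_n$, which is impossible in a stable theory; hence that set of instances is inconsistent, so $\ell$-inconsistent for some $\ell$ by compactness, and $\theta$ divides over $A$. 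This is the sole appeal to stability, and together with (i) and the reduction of the first paragraph it finishes the proof.
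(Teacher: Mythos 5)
Your proof is correct, but it takes a genuinely different route from the paper's. The paper argues at the level of complete global types: it observes that a global type fails to $1$-fork over $A$ exactly when it is Lascar invariant over $A$, identifies this (using stability) with global non-forking, and then transfers the equivalence down to partial types via the extension property of both notions (Remark \ref{R:1}(4)). You instead work at the level of the generating formulas of the two ideals, showing directly that every formula dividing over $A$ $1$-forks over $A$ and that every skew $1$-dividing formula forks over $A$. Your decomposition in (i) --- replacing the dividing formula $\varphi(\bar x;\bar b_0)$ by the finite disjunction of the trivially skew $1$-dividing pieces $\varphi(\bar x;\bar b_0)\wedge\neg\varphi(\bar x;\bar b_n)$ rather than trying to make $\varphi(\bar x;\bar b_0)$ itself skew $1$-divide --- is exactly the right move, and notably this direction uses no stability at all, so it isolates the implication ``forking $\Rightarrow$ $1$-forking'' as valid in an arbitrary theory. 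Your (ii) replaces the paper's appeal to ``global non-forking $=$ Lascar invariance'' by the finite-alternation property of formulas along indiscernible sequences, which is where stability genuinely enters in both arguments. The paper's proof is shorter and cleaner; yours is more explicit about the syntactic content, pinpoints the use of stability, and establishes the stronger, purely local statement that the ideal of forking formulas and the ideal of $1$-forking formulas over $A$ coincide.
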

\pf It is clear that a global type is Lascar invariant over $A$ if and only if it does not $1$-fork over $A$. Thus, the statement follows as non-forking and non-$1$-forking satisfy the extension property. \qed

Nevertheless, for $k>1$ forking and $k$-forking does not agree in general.

\begin{expl}
Consider the first-order theory of an infinite set and let $\phi(x;y)$ be the formula $x=y$. For any  element $a$, we have that the partial type $\{\phi(x;a)\}$ forks over $\emptyset$, but it does not $2$-fork. 
\end{expl}

\begin{lemma}\label{L:SD-Indisc}
If the type $\tp(\bar a/B)$ does not skew $k$-divide over $A$, then for any $A$-indiscernible sequence $I$ contained in $B$, there is some $J\subseteq I$ with $|J|<\k$ such that $I\setminus J$ is an indiscernible set over $AJ\bar a$.
\end{lemma}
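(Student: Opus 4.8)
The plan is to prove the contrapositive: assuming that for \emph{every} $J\subseteq I$ with $|J|<\k$ the sequence $I\setminus J$ is not an indiscernible set over $AJ\bar a$, I will produce a formula with parameters that is skew $\k$-dividing over $A$ and implied by $\tp(\bar a/B)$, contradicting the hypothesis. We may assume $I$ is infinite: if $|I|\le\k$ then any $J\subseteq I$ with $|J|=\min\{|I|,\k-1\}$ works, since a sequence of length $\le 1$ is vacuously an indiscernible set. We also use throughout that in a stable theory an indiscernible sequence is an indiscernible set, so that ``indiscernible set over $AJ\bar a$'' may be verified as ``indiscernible sequence over $AJ\bar a$''.

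\emph{Extraction.} By recursion on $i=0,1,\dots,\k-1$ I build elements $\bar d_1,\dots,\bar d_\k$ of $I$, finite parameter tuples $\bar c_0,\dots,\bar c_{\k-1}$ of elements of $I$, and formulas $\varphi_i$ over $A$. At stage $i$ put $J_i=\{\bar d_1,\dots,\bar d_i\}$; as $|J_i|=i<\k$, the hypothesis says the $AJ_i$-indiscernible sequence $I\setminus J_i$ is not indiscernible over $AJ_i\bar a$. By the usual reduction of non-indiscernibility to the one-variable case — freezing all but one coordinate as a finite tuple $\bar c_i$ of elements of $I\setminus J_i$ — there is a formula over $AJ_i\bar c_i$ separating two elements of the infinite $AJ_i\bar c_i$-indiscernible sequence $(I\setminus J_i)\setminus\bar c_i$. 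The subset this formula cuts out of that sequence, with $\bar a$ as a parameter, is finite or cofinite by stability; after replacing it by its negation if necessary and conjoining clauses that exclude the finitely many unwanted positive elements (absorbing those into $\bar c_i$), we obtain $\varphi_i$ and a single $\bar d_{i+1}\in(I\setminus J_i)\setminus\bar c_i$ with
\[
\models\varphi_i(\bar a;\bar e,\bar d_1,\dots,\bar d_i,\bar c_i)\iff\bar e=\bar d_{i+1}\qquad\text{for all }\bar e\in(I\setminus J_i)\setminus\bar c_i .
\]
With care in the choices one keeps $\bar d_1,\dots,\bar d_\k$ pairwise distinct and disjoint from all entries of $\bar c:=(\bar c_0,\dots,\bar c_{\k-1})$.

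\emph{Assembly.} Set $\bar b_{2i}=\bar d_{i+1}$ for $i<\k$, and choose $\bar b_1,\bar b_3,\dots,\bar b_{2\k-1}$ together with a tail $(\bar b_n)_{n\ge 2\k}$ to be further elements of $I$, all distinct and avoiding the $\bar d_j$'s and the entries of $\bar c$. Since $I$ minus the finitely many entries of $\bar c$ is an $A\bar c$-indiscernible set, the sequence $(\bar b_n)_{n<\omega}$ of distinct elements of it is $A\bar c$-indiscernible. Put $\varphi(\bar x;\bar y_0,\dots,\bar y_{\k-1},\bar z_0,\dots,\bar z_{\k-1}):=\bigwedge_{i<\k}\varphi_i(\bar x;\bar y_i,\bar y_0,\dots,\bar y_{i-1},\bar z_i)$ and plug in $\bar z_i=\bar c_i$. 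For the distinguished tuple $(0,2,\dots,2\k-2)$ the $i$-th conjunct becomes $\varphi_i(\bar a;\bar d_{i+1},\bar d_1,\dots,\bar d_i,\bar c_i)$, which holds. For any other increasing $\k$-tuple $i_0<\dots<i_{\k-1}$ from $\{0,\dots,2\k-1\}$, let $i^*$ be least with $i_{i^*}\neq 2i^*$; then $\bar b_{i_j}=\bar d_{j+1}$ for $j<i^*$, while $i_{i^*}\ge 2i^*-1$ (hence $\bar b_{i_{i^*}}\in(I\setminus J_{i^*})\setminus\bar c_{i^*}$, as $\bar b_{2j}=\bar d_{j+1}$) and $i_{i^*}\neq 2i^*$ (hence $\bar b_{i_{i^*}}\neq\bar d_{i^*+1}$), so the $i^*$-th conjunct $\varphi_{i^*}(\bar a;\bar b_{i_{i^*}},\bar d_1,\dots,\bar d_{i^*},\bar c_{i^*})$ fails. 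Since $\bar b_0,\dots,\bar b_{2\k-2}$ and all entries of $\bar c$ lie in $B$, the formulas $\varphi(\bar x;\bar b_0,\bar b_2,\dots,\bar b_{2\k-2},\bar c)$ and $\neg\varphi(\bar x;\bar b_{i_0},\dots,\bar b_{i_{\k-1}},\bar c)$ belong to $\tp(\bar a/B)$; by Remark~\ref{RemDivideParam} this exhibits $\tp(\bar a/B)$ as skew $\k$-dividing over $A$ — the desired contradiction.

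I expect the main obstacle to be the bookkeeping hidden in the extraction step: arranging that the auxiliary parameters $\bar c_i$ produced at successive stages remain disjoint from $\bar d_1,\dots,\bar d_\k$ (so that $(\bar b_n)$ really is $A\bar c$-indiscernible), while only ever removing fewer than $\k$ of the $\bar d$'s from $I$ so that the hypothesis keeps applying, and so that the positive set of $\varphi_i$ can be collapsed to the single element $\bar d_{i+1}$. Once these are in place, the combinatorial core — the ``staircase'' formula $\bigwedge_{i<\k}\varphi_i$ evaluated on the alternating sequence $\bar b_{2i}=\bar d_{i+1}$ — reproduces on the nose the pattern demanded by the definition of skew $\k$-dividing. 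All the model-theoretic facts invoked (indiscernible sequences are sets; an externally definable subset of an infinite indiscernible sequence is finite or cofinite; the one-variable reduction of non-indiscernibility) are standard in stable theories.
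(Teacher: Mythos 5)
Your reduction to the contrapositive, the treatment of the degenerate case, and the final assembly (the staircase conjunction evaluated at $\bar b_{2i}=\bar d_{i+1}$, verified via the least index $i^*$ with $i_{i^*}\neq 2i^*$) are all sound, and the two stability facts you invoke (one-variable reduction of non-indiscernibility, finite/cofinite) are standard. The gap is exactly at the point you dismiss as ``bookkeeping'': the claim that one can keep $\bar d_1,\dots,\bar d_\k$ disjoint from the entries of $\bar c_0,\dots,\bar c_{\k-1}$ is not a matter of care --- it can fail for \emph{every} admissible run of your extraction. Concretely, let $\bar a=(a_1,a_2)$ where $a_1$ is tied to a single $e_0\in I$ by a formula $S(x_1,y)$ and $a_2$ is tied to an unordered pair $\{f_1,f_2\}\subseteq I$ by a symmetric formula $R(x_2,y,z)$ holding exactly on that pair, and take $\k=3$. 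The contrapositive hypothesis holds (any $J$ with $|J|\le 2$ misses one of $e_0,f_1,f_2$), and the type does skew $3$-divide, but to isolate $f_1$ or $f_2$ by a formula with a singleton positive set you must freeze the other as a parameter, i.e.\ place it into some $\bar c_i$; at a later stage that frozen element is the unique element of the relevant positive set and is therefore forced to become some $\bar d_j$. Then $\bar b_{2(j-1)}=\bar d_j$ is an entry of $\bar c$, the sequence $(\bar b_n)_{n<\omega}$ is not $A\bar c$-indiscernible, and Remark~\ref{RemDivideParam} no longer applies. The actual skew-dividing witness here is $S(x_1;y_0)\wedge R(x_2;y_1,y_2)$, a formula genuinely occupying two $\bar y$-slots with the pair, which your singleton-extraction format cannot produce.

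This is precisely the difficulty the paper's proof is built around: instead of isolating single elements behind frozen parameters, it extracts blocks $(\bar b_m)_{m\in[k_n,k_{n+1})}$ of \emph{minimal} length at each stage, then splits into the case where the total block length stays below $\k$ (where the conclusion follows directly) and the case where it reaches $\k$; in the latter case it runs a swapping argument (the $\Lambda$/$\eta$ analysis over a doubled sequence based over $\bar b_*$) that uses the non-$\k$-forking hypothesis a second time to contradict the minimality of some block. Repairing your argument requires replacing the extraction step by something of this kind; note that simply allowing blocks of size greater than one breaks your assembly, since removing the blocks then costs $\k$ or more elements and the contrapositive hypothesis can no longer be applied at the next stage.
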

\pf 
Inductively on $n \le k$, we obtain a strictly increasing sequence of natural numbers $(k_n)_{n\le k}$ with $k_0=0$ for which there is a subsequence $J_n=(\bar b_m)_{m\in (k_n,k_{n+1})}$ of $I$ without repetitions and a formula $\phi_n(\bar x; \bar y_0,\ldots, \bar y_{k_{n+1}-1},\bar z)$ such that:
\begin{itemize}
\item[$\bullet_1$] there is some finite tuple $\bar c_n$ in $AI$ such that $\phi_n(\bar a ;\bar b_0,\ldots, \bar b_{k_{n+1}-1},\bar c_n)$ holds, 
\item[$\bullet_2$] there is some finite subset $I_n$ of $I$, containing $I_{n-1}$, such that the relation $\neg\phi_n(\bar a;\bar b_0,\ldots,\bar b_{k_n -1},\bar y_{k_n}',\ldots, \bar y_{k_{n+1}-1}',\bar c_n)$ also holds for any $\bar b_{k_n}',\ldots , \bar b_{k_{n+1}-1}'$ in $I\setminus I_n$, and
\item[$\bullet_3$] $k_{n+1}$ is minimal with these properties.

\end{itemize}
Let $\Delta_n$ be the closure of $\phi_n$ under permuting the variables, and let $\Delta$ be the union of all these $\Delta_n$. 
As $\tp(\bar a/AI)$ does not $k$-fork over $A$, there is some $n_* \le k$ for which we cannot keep doing the construction for $n_*$. If $|J_{ < n_*}|<k$, then as the truth value of any formula over $A\bar a J_{<n_*}$ is constant in a cofinal segment of $I\setminus J_{< n_*}$, the choice of $n_*$ yields that $I\setminus J_{< n_*}$ is indiscernible over $A\bar a J_{< n_*}$, as desired. 

Assume now that $|J_{< n_*}| \ge k$. Thus, by construction we have that
\begin{itemize}
\item[$\otimes_1$] $k_{n_*} \ge k$ and so $n_* \ge 1$. 
\end{itemize}
Now, take some $\bar b_0^*,\ldots,\bar b_{2k_{n_*} -1}^* \in I \setminus (I_{< n_*} \cup J_{< n_*})$ without repetitions and set $\bar b_* =\bar b_{k_{n_*}}^* \ldots \bar b_{2k_{n_*} -1}^*$ and $\bar b_i^1 = \bar b_i$ and $\bar b_i^2 = \bar b_{i}^*$ for $i<k_{n_*}$. 

Put $\bar z=\bar z_0\ldots \bar z_{n_*}$ and also $\bar c_* = \bar c_0\ldots \bar c_{n_*}$. Then let 
$\psi(\bar x;\bar y_0,\ldots,\bar y_{k_{n_*+1}-1},\bar z,\bar b_*)$ denote the conjunction of the finite partial type $\tp_\Delta(\bar a \bar b_0 \ldots \bar b_{k_{n_*}-1} \bar c_* /\bar b_*)$.

Notice that $\psi(\bar a;\bar b_0^1,\ldots,\bar b_{k_{n_*}-1}^1,\bar c_*,\bar b_*)$ holds by construction. Thus, the set $\Lambda$ of functions $\eta:\{0,\ldots,k_{n_*}-1\} \longrightarrow \{ 1 , 2 \}$ such that $\psi(\bar a;\bar b_0^{\eta(0)},\ldots,\bar b _{k_{n_*}-1}^{\eta(k_{n_*}-1)},\bar c_*,\bar b_*)$ holds is non-empty. Moreover, note that $\bar b_0^1,\bar b_0^2, \ldots , \bar b_{k_{n_*}-1}^1,\bar b_{k_{n_*}-1}^2$ cannot witness that $\tp(\bar a/AI)$ $k$-forks over $A\bar b_*$, as $\tp(\bar a/AI)$ does not $k$-fork over $A$. Thus, there is some $\eta \in \Lambda$ such that $u_\eta = \{ l < k_{n_*} : \eta(l) = 2 \}$ is non-empty. 

Fix some $\eta\in \Lambda$ with $u_\eta \neq \emptyset$, and let $n_{**}<n_*$ be minimal with the property that $u_\eta \cap [k_{n_{**}},k_{n_{**}+1}) \neq \emptyset$. After permuting the variables if necessary we may assume that $u_\eta \cap [k_{n_{**}},k_{n_{**}+1}) = [k_*, k_{n_{**}+1})$ for some $k_* \in [k_{n_{**}},k_{n_{**}+1})$.
Now, set $\bar d_0 = \bar b_0 \ldots \bar b_{k_{n_{**}}-1}$,  $\bar d_1 = \bar b_{k_{n_{**}}}\ldots \bar b_{k_{*}-1}$, $\bar d_2^1 = \bar b_{k_{*}}^1 \ldots \bar b_{k_{n_{**}+1}-1}^1$ and $\bar d_2^2 = \bar b_{k_{*}}^{\eta(0)} \ldots \bar b_{k_{n_{**}+1}-1}^{\eta(k_{n_{**}+1}-1)}$. Hence, by $\bullet_1$ we have that
\begin{itemize}
\item[$\otimes_2$] $\phi_{n_{**}}(\bar a;\bar d_0,\bar d_1,\bar d_2^1,\bar c_n)$ holds.
\end{itemize}
On the other hand, the choice of $\psi$ and $\eta\in\Lambda$ yield that
\begin{itemize}
\item[$\otimes_3$] $\phi_{n_{**}}(\bar a;\bar d_0,\bar d_1,\bar d_2^1,\bar c_n)$ holds if and only if so does $\phi_{n_{**}}(\bar a;\bar d_0,\bar d_1,\bar d_2^2,\bar c_n)$.
\end{itemize}
Hence, by $\otimes_2$ and $\otimes_3$ we get that
\begin{itemize}
\item[$\otimes_4$] $\phi_{n_{**}}(\bar a;\bar d_0,\bar d_1,\bar d_2^2,\bar c_n)$ holds.
\end{itemize}
Observe that since $n_{**}<n_*$, we have that $I_{n_{**}}$ is contained in $I_{<n_{*}}$ and so $\bar d_2^2$ is formed with elements from $I\setminus I_{n_{**}}$. Thus, by $\bullet_2$  we have that
\begin{itemize}
 \item[$\otimes_5$] for any pairwise distinct elements $\bar b_{k_{n_{**}}}',\ldots, \bar b_{k_{*}-1}'$ of $I\setminus I_{n_{**}}$ we have that $\neg \phi_{n_{**}}(\bar a;\bar d_0,\bar b_{k_{n_{**}}}',\ldots, \bar b_{k_{*}-1}',\bar d_2^2,\bar c_{n_{**}})$ holds.
\end{itemize}
Therefore, by $\otimes_4$, $\otimes_5$ and setting $\bar c_{n_{**}}' = \bar d_2^2\bar c_{n_{**}}$, we contradict the minimality of $k_{n_{**}+1}$ given by $\bullet_3$ since $k_*<k_{n_{**}+1}$. This finishes the proof. \qed

\begin{prop}\label{P:Equiv1} Let $\bar a$ be a finite tuple, and let $A$ be a subset of an $(|A|+|T|)^+$-saturated model $M$. Then, the following are equivalent:
\begin{enumerate}
 \item The type $\tp(\bar a/M)$ does not $k$-fork over $A$.
 \item For any $A$-indiscernible sequence $I$ contained in $M$, there is some $J\subseteq I$ with $|J|<\k$ such that $I\setminus J$ is an indiscernible set over $AJ\bar a$.
\end{enumerate}
Moreover, the above properties implies the following:
\begin{enumerate}
\item[(3)] For any $A$-independent sequence $I$ contained in $M$, there is some $J\subseteq I$ with $|J|<\k$ such that $I\setminus J$ is independent from $AJ\bar a$ over $A$.
\end{enumerate}
\end{prop}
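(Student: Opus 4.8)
The plan is to prove the three implications separately, the bulk of the work being in $(2)\Rightarrow(1)$.

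For $(1)\Rightarrow(2)$ I would simply quote Lemma~\ref{L:SD-Indisc}. Given an $A$-indiscernible sequence $I\subseteq M$ we have $AI\subseteq M$, so by monotonicity (Remark~\ref{R:1}) the type $\tp(\bar a/AI)$ does not $k$-fork, hence a fortiori does not skew $k$-divide, over $A$; applying Lemma~\ref{L:SD-Indisc} with $B=AI$ yields the required $J$. (If $I$ is finite one first extends it to an infinite $A$-indiscernible sequence inside $M$, using the saturation hypothesis, and intersects the exceptional set with $I$.)

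The main direction is $(2)\Rightarrow(1)$, proved by contraposition. Assume $\tp(\bar a/M)$ $k$-forks over $A$, say $\tp(\bar a/M)\vdash\bigvee_{j<m}\varphi_j(\bar x;\bar c_j)$ with each $\varphi_j(\bar x;\bar c_j)$ skew $k$-dividing over $A$; by Remark~\ref{RemDivideParam} we may take the witnessing data parameter-free, namely a formula $\rho_j(\bar x;\bar y_0,\dots,\bar y_{k-1})$ and an $A$-indiscernible sequence $I_j=(\bar b^j_n)_{n<\omega}$. By compactness fix $\chi(\bar x;\bar d)\in\tp(\bar a/M)$ with $\bar d$ a finite tuple from $M$ and $\chi(\bar x;\bar d)\vdash\bigvee_j\varphi_j(\bar x;\bar c_j)$; then already $\tp(\bar a/A\bar d)$ implies this disjunction. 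The data $\big((\bar c_j,I_j)\big)_{j<m}$ form a tuple of length $\le\aleph_0$ over the set $A\bar d$ of size $\le|A|+|T|$, so by $(|A|+|T|)^+$-saturation of $M$ its type over $A\bar d$ is realised in $M$; let $\sigma$ be an $A\bar d$-automorphism carrying it into $M$. Since skew $k$-dividing over $A$ is invariant under $A$-automorphisms, each $\varphi_j(\bar x;\sigma\bar c_j)$ still skew $k$-divides over $A$, witnessed by $\rho_j$ and $\sigma I_j\subseteq M$, and $\tp(\bar a/A\bar d)\vdash\bigvee_j\varphi_j(\bar x;\sigma\bar c_j)$, so $\varphi_{j_0}(\bar a;\sigma\bar c_{j_0})$ holds for some $j_0$. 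Put $I:=\sigma I_{j_0}=(\bar b_n)_{n<\omega}\subseteq M$, an $A$-indiscernible sequence. Unwinding the skew $k$-dividing implications, $\bar a$ realises the pattern: $\rho_{j_0}(\bar a;\bar b_0,\bar b_2,\dots,\bar b_{2(k-1)})$ holds, while $\neg\rho_{j_0}(\bar a;\bar b_{i_0},\dots,\bar b_{i_{k-1}})$ holds for every increasing $k$-tuple $i_0<\dots<i_{k-1}<2k$ other than $(0,2,\dots,2(k-1))$.

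It remains to check that this $I$ violates (2), and this combinatorial step is the real content. Suppose $J\subseteq I$, $|J|<k$, with $I\setminus J$ indiscernible over $AJ\bar a$. At most $k-1$ of $\bar b_0,\dots,\bar b_{2k-1}$ lie in $J$. Split the distinguished tuple $\bar t^*=(\bar b_0,\bar b_2,\dots,\bar b_{2(k-1)})$ according to which coordinates lie in $J$ (positions $P\subseteq\{0,\dots,k-1\}$) and which survive in $I\setminus J$ (positions $Q$). A counting argument gives some $i^*\in Q$ with $\bar b_{2i^*-1}$ or $\bar b_{2i^*+1}$ surviving in $I\setminus J$: otherwise $J$ would contain the $|P|$ even-indexed elements $\bar b_{2i}$ ($i\in P$) together with the $|Q|$ distinct odd-indexed elements $\bar b_{2q+1}$ ($q\in Q$), i.e.\ at least $|P|+|Q|=k$ elements. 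Replacing the $i^*$-th coordinate of $\bar t^*$ by that survivor yields an increasing $k$-tuple $\bar t'\neq\bar t^*$ with all indices $<2k$ and with the same coordinates as $\bar t^*$ at the positions in $P$; hence $\rho_{j_0}(\bar a;\bar t^*)$ holds while $\rho_{j_0}(\bar a;\bar t')$ fails. Substituting the common $J$-coordinates $\{\bar b_{2i}:i\in P\}$ into $\rho_{j_0}$ produces a formula over $AJ\bar a$ holding on one increasing tuple from $I\setminus J$ (the surviving coordinates of $\bar t^*$) and failing on another (those of $\bar t'$), contradicting indiscernibility over $AJ\bar a$. As an indiscernible set is in particular an indiscernible sequence, $I\setminus J$ is not even an indiscernible set over $AJ\bar a$, so (2) fails. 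Finally, for $(1)\wedge(2)\Rightarrow(3)$, which is the routine part, I would run the argument of $(1)\Rightarrow(2)$ with an $A$-independent sequence in place of an $A$-indiscernible one — either via a variant of Lemma~\ref{L:SD-Indisc}, or by extracting an $A$-indiscernible sequence (in a larger saturated model, after extending $\tp(\bar a/M)$ to a non-$k$-forking type there by Remark~\ref{R:1}) and transferring the exceptional set back, using that in a stable theory the independence of $I\setminus J$ from $AJ\bar a$ over $A$ is witnessed by finite sub-configurations. The expected main obstacle is precisely the combinatorial claim in $(2)\Rightarrow(1)$: that deleting fewer than $k$ elements always leaves the pattern on the first $2k$ elements rigid enough to break indiscernibility over the enlarged base.
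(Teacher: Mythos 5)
Your treatment of the equivalence $(1)\Leftrightarrow(2)$ is correct and follows the same route as the paper: $(1)\Rightarrow(2)$ is exactly Lemma~\ref{L:SD-Indisc}, and $(2)\Rightarrow(1)$ is the same contrapositive argument (pull the skew $k$-dividing witnesses into $M$ by saturation, then use a pigeonhole swap on the first $2k$ terms of the witnessing sequence to contradict indiscernibility of $I\setminus J$ over $AJ\bar a$). Your counting argument for the swap is in fact slightly more careful than the paper's, which only records that some even index $2l$ avoids $J$ and leaves implicit that the replacement index must also survive in $I\setminus J$; your version handles this explicitly.

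The genuine gap is in $(3)$, which you set aside as routine. An $A$-independent sequence $I=(\bar a_s)_{s<\alpha}$ need not consist of realizations of a single type over $A$, so in general it has no infinite $A$-indiscernible subsequence, and neither of your proposed routes works as stated: no ``variant of Lemma~\ref{L:SD-Indisc}'' for independent sequences is available, and ``extracting an indiscernible sequence and transferring the exceptional set back'' founders because the extraction cannot be performed inside $I$ itself and in any case destroys the correspondence with the original elements $\bar a_s$. The paper's argument requires an extra idea: replace each $\bar a_s$ by the column $\bar b_t=(\bar a_{s,t})_{s<\alpha}$ of an $A$-independent array in which each row $(\bar a_{s,t})_{t<\alpha}$ is a Morley sequence in $\stp(\bar a_s/A)$ with $\bar a_{s,s}=\bar a_s$. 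The columns all realize the stationary type $\bigotimes_{s<\alpha}\stp(\bar a_s/A)$ and are $A$-independent, hence form a Morley sequence and thus an $A$-indiscernible sequence to which $(2)$ applies; the resulting exceptional set $J$ transfers back to $I$ because each $\bar a_s$ is a coordinate of $\bar b_s$, and the fact that $(\bar b_t)_{t\notin J}$ is a Morley sequence indiscernible over $A\bar a\cup\{\bar b_t\}_{t\in J}$ yields the required independence over $A$. Without this (or an equivalent) device your proof of $(3)$ does not go through.
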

\pf $(1)\Rightarrow (2)$ is the lemma above. To show $(2)\Rightarrow (1)$, suppose that $\tp(\bar a/M)$ $k$-forks over $A$. Thus there is some formula $\psi(\bar x)\in \tp(\bar a/M)$ that $k$-forks over $A$. That is, the formula $\psi(\bar x)$ implies a finite disjunction of formulas that skew $k$-divide over $A$. Note that by saturation of $M$ each of these formulas can be taken with parameters over $M$. Thus we can find a formula $\phi(\bar x;\bar y_0,\ldots,\bar y_{k-1})$ and an $A$-indiscernible sequence $(\bar b_n)_{n<\omega}$ witnessing this. Notice again by saturation that we may take $(\bar b_n)_{n<\omega}$ inside $M$. By (2), there is a finite subset $J$ of $\omega$ with $|J|<k$ such that $I\setminus J$ is indiscernible over $A\bar aJ$. Thus, there is some $l<k$ such that $2l\not\in J$ and so $\phi(\bar a;\bar b_{i_0},\ldots,\bar b_{i_{k-1}})$ holds by indiscernibility  taking $i_j = 2j$ for $j\neq l$ and $i_l = 2l+1$, a contradiction.

Finally we see that $(2)\Rightarrow (3)$. Assume that $I=(\bar a_s)_{s<\alpha}$ is an $A$-independent sequence and consider a Morley sequence $(\bar a_{s,t})_{t<\alpha}$ in $p_s=\stp(\bar a_s/A)$ with $\bar a_s = \bar a_{s,s}$ in a way that the array $(\bar a_{s,t})_{s,t <\alpha}$ is an independent set over $A$. By saturation, we may take this array inside $M$. Set $\bar b_t = (\bar a_{s,t})_{s<\alpha}$ and note that it realizes the stationary type $\bigotimes_{s<\alpha} p_s$. Consequently, as $(\bar b_t)_{t<\alpha}$ is $A$-independent, we obtain that it is an $A$-indiscernible sequence. Hence, by (2) there exists some $J\subseteq \alpha$ with $|J|<k$ such that $(\bar b_t)_{t\not\in J}$ is indiscernible over $A\bar a \cup\{\bar b_t \}_{t\in J}$. Whence, since  $(\bar b_t)_{t\not\in J}$ is Morley sequence in $\bigotimes_{s<\alpha} p_s$, we have that $\bar a \cup\{\bar b_t \}_{t\in J}$ is independent from $(\bar b_t)_{t\not\in J}$ over $A$, and so $I\setminus \{\bar a_s\}_{s\in J}$ is independent from $A\bar a\cup\{\bar a_t\}_{t\in J}$, as desired.  \qed

\begin{remark}
In view of Remark \ref{R:1}(4) and Proposition \ref{P:Equiv1} we could have defined $\k$-forking as follows: A partial type $\pi(\bar x)$ does not $k$-fork over $A$ if it can be extended to a complete type $p(\bar x)$ over an $(|A|+|T|^+)$-saturated model $M$ such that for any $\bar a\models p$ and any $A$-indiscernible sequence $I$ contained in $M$, there is some $J\subseteq I$ with $|J|<\k$ such that $I\setminus J$ is an indiscernible set over $AJ\bar a$.
\end{remark}

\begin{lemma}\label{L:Trans}
If $\tp(\bar a_1/B)$ does not $\k_1$-fork over $A\subseteq B$ and $\tp(\bar a_2/B\bar a_1)$ does not $\k_2$-fork over $A\bar a_1$, then $\tp(\bar a_1\bar a_2/B)$ does not $(\k_1+\k_2)$-fork over $A$. 
\end{lemma}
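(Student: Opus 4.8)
The plan is to derive this transitivity property from the characterisation of $k$-forking over saturated models in Proposition \ref{P:Equiv1}, turning the statement into one about $A$-indiscernible sequences. Since $k$-forking over $A$ has finite character (Remark \ref{R:1}(2)) and the formulas that $k$-fork over $A$ form an ideal, the extension property holds over arbitrary domains; using this, fix an $(|B|+|T|)^+$-saturated model $M_0\supseteq B$ and $p_1\in S(M_0)$ extending $\tp(\bar a_1/B)$ without $k_1$-forking over $A$, and put $\bar b_1\models p_1$. Applying an automorphism fixing $B$ and mapping $\bar a_1$ to $\bar b_1$ transports the second hypothesis to a realisation $\bar c_2$ of $\tp(\bar a_2/B\bar a_1)$ with $\bar b_1\bar c_2\equiv_B\bar a_1\bar a_2$ and $\tp(\bar c_2/B\bar b_1)$ not $k_2$-forking over $A\bar b_1$. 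Now choose an $(|M_0|+|T|)^+$-saturated $M_1\supseteq M_0\bar b_1$, extend $\tp(\bar c_2/B\bar b_1)$ to some $p_2\in S(M_1)$ without $k_2$-forking over $A\bar b_1$, and let $\bar b_2\models p_2$. Then $\bar b_1\bar b_2\equiv_B\bar a_1\bar a_2$, $\tp(\bar b_1/M_0)$ does not $k_1$-fork over $A$, $\tp(\bar b_2/M_1)$ does not $k_2$-fork over $A\bar b_1$, and $M_0\subseteq M_1$ are saturated with $\bar b_1\in M_1$; by monotonicity (Remark \ref{R:1}(1)) it suffices to show that $\tp(\bar b_1\bar b_2/M_0)$ does not $(k_1+k_2)$-fork over $A$.

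To establish this I would check clause (2) of Proposition \ref{P:Equiv1} for the tuple $\bar b_1\bar b_2$ over $A$ inside $M_0$. So let $I\subseteq M_0$ be $A$-indiscernible. Proposition \ref{P:Equiv1}, applied to $\tp(\bar b_1/M_0)$, provides $J_1\subseteq I$ with $|J_1|<k_1$ such that $I\setminus J_1$ is an indiscernible set — in particular an indiscernible sequence — over $AJ_1\bar b_1$, and note $I\setminus J_1\subseteq M_1$. The crucial observation is that $\tp(\bar b_2/M_1)$ does not $k_2$-fork over the larger base $AJ_1\bar b_1\subseteq M_1$: by Remark \ref{R:1}(1), skew $k$-dividing, and hence $k$-forking, over a set implies the same over every subset, so not $k_2$-forking over $A\bar b_1$ yields not $k_2$-forking over $AJ_1\bar b_1$. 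Applying Proposition \ref{P:Equiv1} to $\tp(\bar b_2/M_1)$ with base $AJ_1\bar b_1$ and the $AJ_1\bar b_1$-indiscernible sequence $I\setminus J_1$ then gives $J_2\subseteq I\setminus J_1$ with $|J_2|<k_2$ such that $(I\setminus J_1)\setminus J_2$ is an indiscernible set over $AJ_1\bar b_1J_2\bar b_2$. Taking $J=J_1\cup J_2$ we obtain $J\subseteq I$ with $|J|<k_1+k_2$ and $I\setminus J=(I\setminus J_1)\setminus J_2$ an indiscernible set over $AJ\bar b_1\bar b_2$, so Proposition \ref{P:Equiv1} yields that $\tp(\bar b_1\bar b_2/M_0)$ does not $(k_1+k_2)$-fork over $A$, as wanted.

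The step I expect to be the main obstacle is arranging that the two applications of Proposition \ref{P:Equiv1} genuinely compose. Extracting $J_1$ for $\bar b_1$ and $J_2$ for $\bar b_2$ \emph{separately} from the same $I$ only gives a common remainder that is indiscernible over $AJ_1\bar b_1$ and, independently, over $AJ_2\bar b_2$, and there is no reason for these to merge over $AJ_1J_2\bar b_1\bar b_2$. The resolution — the heart of the argument — is to remove the two sets \emph{sequentially}, running the $k_2$-analysis for $\bar b_2$ on $I\setminus J_1$ relative to the enlarged base $AJ_1\bar b_1$; this is licensed precisely by the monotonicity of $k$-forking under enlarging the base within the ambient model, and it relies on the first application producing an honest indiscernible \emph{set}, so that $I\setminus J_1$ is indeed an indiscernible sequence over $AJ_1\bar b_1$ ready to be fed into the second application. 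The remaining point requiring care is purely bookkeeping: $\bar b_1$ must lie inside the model over which the type of $\bar b_2$ is analysed, which is what forces the two nested saturated models $M_0\subseteq M_1$. (If $\bar a_1,\bar a_2$ are infinite the same argument applies, Proposition \ref{P:Equiv1} being insensitive to the length of the tuple.)
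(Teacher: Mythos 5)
Your proof is correct and follows essentially the same route as the paper: extend the two types successively to non-$k_i$-forking types over saturated models via the extension property, then, given an $A$-indiscernible $I$, remove $J_1$ for $\bar b_1$ and then $J_2$ for $\bar b_2$ \emph{sequentially} over the enlarged base (two applications of Lemma \ref{L:SD-Indisc}, which is the content of Proposition \ref{P:Equiv1}(1)$\Rightarrow$(2)), and conclude by the converse direction of Proposition \ref{P:Equiv1}. The only cosmetic difference is that the paper works with a single saturated model $M$ and the type $\tp(\bar a_2''/M\bar a_1')$ rather than introducing a second nested model $M_1$ containing $\bar b_1$.
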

\pf Consider an $(|A|+|T|)^+$-saturated model $M$ extending $B$. By extension, {\it i.e.} Remark \ref{R:1}(4), there is some $\bar a_1'\models\tp(\bar a_1/B)$ such that $\tp(\bar a_1'/M)$ does not $\k_1$-fork over $A$. Let $\bar a_2'$ be such that $\bar a_1\bar a_2\equiv_B \bar a_1'\bar a_2'$ and note that $\tp(\bar a_2'/B,\bar a_1')$ does not $\k_2$-fork over $A\bar a_1'$ by invariance. Hence, again by extension there is some $\bar a_2''\equiv_{B\bar a_1'} \bar a_2'$ such that $\tp(\bar a_2''/M,\bar a_1')$ does not $\k_2$-fork over $A\bar a_1'$. Now, given an $A$-indiscernible sequence $I$ contained in $M$, applying twice Lemma \ref{L:SD-Indisc} we find two disjoint subsets $J_1$ and $J_2$ of $I$ with $|J_1|<k_1$ and $|J_2|<k_2$ such that $I\setminus (J_1\cup J_2)$ is indiscernible over $AJ_1J_2\bar a_1'\bar a_2''$. Hence by Proposition \ref{P:Equiv1}, we get that the type $\tp(\bar a_1'\bar a_2''/M)$ does not $(k_1+k_2)$-fork over $A$ and neither does $\tp(\bar a_1\bar a_2/M)$ by invariance. \qed 

In the light of the previous result we introduce the following notion.

\begin{defn}
A partial type $\pi(\bar x)$ $\omega$-forks over $A$ if it $\k$-forks over $A$ for every natural number $\k$. We write $\bar a\wind_A B$ whenever $\tp(\bar a/AB)$ does not $\omega$-fork over $A$. 
\end{defn}
This notion satisfies the usual axioms of a ternary independence relation. 

\begin{theorem}\label{T:Main1} The ternary relation $\wind$ defined among imaginary sets satisfies the following properties:
\begin{enumerate}
 \item Invariance: $\wind$ is invariant under $\Aut(\M)$. 
 \item Finite character: $\bar a\wind_A B$ if and only if $\bar a'\wind_A B'$ for any finite tuple $\bar a'\subseteq \bar a$ and any finite set $B'\subseteq B$. 
 \item Transitivity: If $\bar a\wind_{A\bar b} B$ and $\bar b\wind_A B$, then $\bar a\bar b\wind_A B$.
 \item Base monotonicity: If $\bar a\wind_A BC$, then $\bar a\wind_{AB} C$. 
 \item Extension: If $\bar a\wind_A B$, then for any $C$ there exists some $\bar a'\equiv_{AB} \bar a$ with $\bar a'\wind_A BC$.
 \item Local character: For every finite tuple $\bar a$ and any set $B$ there is some $A\subseteq B$ with $|A|<|T|^+$ such that $\bar a\wind_A B$. 
 \item Symmetry: $\bar a\wind_A \bar b$ if and only if $\bar b\wind_A \bar a$. 
 \end{enumerate} 
\end{theorem}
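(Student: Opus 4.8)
The plan is to derive each clause from the corresponding statement at the finite level — that is, for $k$-forking — established in Remark~\ref{R:1}, Lemma~\ref{L:k=1}, Proposition~\ref{P:Equiv1} and Lemma~\ref{L:Trans}, keeping careful track of the natural number $k$ and using throughout that $\bar a\wind_A B$ holds precisely when $\tp(\bar a/AB)$ fails to $k$-fork over $A$ for \emph{some} $k$, and that by Remark~\ref{R:1}(1) the set of $k$ for which a fixed partial type $k$-forks over a fixed set is an initial segment of $\omega$. \textbf{Invariance} is immediate, since skew $k$-dividing, and hence $k$-forking, are automorphism invariant. \textbf{Local character}: choose $A\subseteq B$ with $|A|<|T|^+$ over which $\tp(\bar a/B)$ does not fork (ordinary local character in a stable theory); by Lemma~\ref{L:k=1} it does not $1$-fork over $A$, so by Remark~\ref{R:1}(1) it does not $k$-fork over $A$ for any $k$, whence $\bar a\wind_A B$.

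\textbf{Base monotonicity} and \textbf{extension} also come directly from Remark~\ref{R:1}. If $\tp(\bar a/ABC)$ does not $k$-fork over $A$, then it does not $k$-fork over $AB$ either: a formula skew $k$-dividing over $AB$ skew $k$-divides over $A$ (its $AB$-indiscernible witness is $A$-indiscernible, Remark~\ref{R:1}(1)), so $k$-forking over $AB$ implies $k$-forking over $A$; this yields $\bar a\wind_A BC\Rightarrow\bar a\wind_{AB}C$ with the same $k$. For extension, given $\bar a\wind_A B$ fix $k$ with $\tp(\bar a/AB)$ not $k$-forking over $A$; viewed as a partial type over $ABC$ it still does not $k$-fork over $A$, so by Remark~\ref{R:1}(4) it extends to some $p\in S(ABC)$ not $k$-forking over $A$, and any realisation $\bar a'$ of $p$ satisfies $\bar a'\equiv_{AB}\bar a$ and $\bar a'\wind_A BC$. \textbf{Transitivity} is precisely Lemma~\ref{L:Trans}: if $\tp(\bar b/AB)$ does not $k_1$-fork over $A$ and $\tp(\bar a/AB\bar b)$ does not $k_2$-fork over $A\bar b$, then $\tp(\bar a\bar b/AB)$ does not $(k_1+k_2)$-fork over $A$, so $\bar a\bar b\wind_A B$.

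For \textbf{finite character}, the forward direction is again the monotonicity used above: if $\tp(\bar a/AB)$ does not $k$-fork over $A$, then no formula over $AB'$ in finitely many of the variables $\bar x$ implied by this type can $k$-fork over $A$, hence $\tp(\bar a'/AB')$ does not $k$-fork over $A$ for all finite $\bar a'\subseteq\bar a$ and $B'\subseteq B$. The converse is obtained from the finite-witness property of $k$-forking (Remark~\ref{R:1}(2)) together with a compactness argument, extracting from the data that $\tp(\bar a/AB)$ $k$-forks over $A$ for every $k$ a finite configuration on which the same already holds.

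The substantial point is \textbf{symmetry}. Here the plan is to exploit Proposition~\ref{P:Equiv1} and the Remark following it. Using Remark~\ref{R:1}(4) one first replaces $\bar a$ by a realisation of a non-$k$-forking extension of $\tp(\bar a/A\bar b)$ to a sufficiently saturated model $M\ni\bar b$; by Proposition~\ref{P:Equiv1} this means that \emph{every} $A$-indiscernible sequence in $M$ becomes indiscernible over $AJ\bar a$ after deleting some $J$ with $|J|<k$, and by clause~(3) of that proposition the analogous bound holds for $A$-independent sequences in $M$ — informally, $\bar a$ has $A$-weight $<k$ relative to $M$. Assuming $\bar b\nwind_A\bar a$, the Remark after Proposition~\ref{P:Equiv1} supplies, for every $n$, an $A$-indiscernible sequence witnessing that $\tp(\bar b/A\bar a)$ $n$-forks over $A$; realising an appropriate $A$-Morley sequence of conjugates of $\bar b$ inside $M$ and invoking the symmetry of non-forking together with the forking/weight calculus available in the stable theory $T$, one converts this into an $A$-independent sequence in $M$ with at least $k$ terms each forking with $\bar a$ over $A$ in an unremovable way, contradicting the bound above; hence $\bar b\wind_A\bar a$. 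I expect this transfer — turning the one-sided, sequence-by-sequence estimate of Proposition~\ref{P:Equiv1} into a genuinely symmetric statement — to be the main obstacle, and it is the step where stability is indispensable.
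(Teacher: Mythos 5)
Clauses (1)--(6) of your proposal follow the same route as the paper: invariance, the forward direction of finite character, and base monotonicity come from the definition together with Remark~\ref{R:1}(1); extension from Remark~\ref{R:1}(4); transitivity from Lemma~\ref{L:Trans}; and local character from stability via Lemma~\ref{L:k=1} and Remark~\ref{R:1}(1). One caveat: for the converse of finite character your appeal to ``a compactness argument'' does not actually close the issue, because the finite configuration supplied by Remark~\ref{R:1}(2) witnessing that $\tp(\bar a/AB)$ $k$-forks over $A$ depends on $k$; to get a single finite $\bar a'\subseteq\bar a$ and $B'\subseteq B$ with $\bar a'\nwind_A B'$ you must explain how these witnesses are made uniform in $k$. (The paper is equally terse here, but you should not present this as a routine compactness extraction.)

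The genuine gap is symmetry. Your plan is to bound the number of ``unremovable'' elements against $\bar a$ by some $k$ via Proposition~\ref{P:Equiv1}, and then to convert $\bar b\nwind_A\bar a$ into an $A$-independent sequence in $M$ with at least $k$ terms, each forking with $\bar a$ over $A$. That conversion is exactly the content of symmetry and is nowhere justified. The hypothesis $\bar b\nwind_A\bar a$ says that $\tp(\bar b/A\bar a)$ $n$-forks over $A$ for every $n$; this is a statement about formulas in the variables of $\bar b$ and indiscernible sequences attached to $\bar b$'s type, and via $1$-forking $=$ forking plus symmetry of ordinary forking it yields only that $\bar a\nind_A\bar b$, i.e.\ a single element forking with $\bar a$. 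There is no indicated mechanism for manufacturing $k$ pairwise $A$-independent such elements; for instance $\bar b$ may have weight $1$ over $A$ while $\tp(\bar b/A\bar a)$ still $\omega$-forks over $A$, so ``many independent conjugates of $\bar b$ each forking with $\bar a$'' is not what the hypothesis provides. The paper sidesteps this dualization problem entirely: symmetry is obtained abstractly from the axioms already established (Adler's Theorem 2.5), or by the stability shortcut of building an indiscernible sequence $(\bar a_i)_{i<|T|^+}$ in $\tp(\bar a/A\bar b)$ with $\bar a_i\wind_A\bar b,(\bar a_j)_{j<i}$, reversing $\bar a_i\wind_A(\bar a_j)_{j<i}$ to $(\bar a_j)_{j<i}\wind_A\bar a_i$ because indiscernible sequences are indiscernible sets in a stable theory, and then invoking local character and transitivity to obtain $\bar b\wind_A\bar a_i$ for some $i$, whence $\bar b\wind_A\bar a$ by invariance. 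You should replace your sketch by an argument of this kind.
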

\pf Invariance, finite character and base monotonicity are straightforward from the definition. Extension follows from Remark \ref{R:1}(4), and transitivity from Lemma \ref{L:Trans}. Furthermore, notice that the relation $\wind$ satisfies local character by stability, Lemma \ref{L:k=1} and Remark \ref{R:1}(1).

Finally, symmetry holds by \cite[Theorem 2.5]{Adler1}. We offer a shorter proof using stability.  By extension and finite character we can find an indiscernible  sequence $(\bar a_i)_{i<|T|^+}$ in $\tp(\bar a/A,\bar b)$ such that $\bar a_i\wind_A \bar b,(\bar a_j)_{j<i}$ for every $i<|T|^+$. In particular, we have that $\bar a_i\wind_A (\bar a_j)_{j<i}$. As any indiscernible sequence is an indiscernible set, we obtain inductively on $i$ that $(\bar a_j)_{j<i}\wind_A \bar a_i$ by invariance, finite character and transitivity. Now, local character of $\wind$ implies the existence of some $i<|T|^+$ such that $\bar b\wind_{A,(\bar a_j)_{j<i}} \bar a_i$. Hence, we obtain that $\bar b,(\bar a_j)_{j<i}\wind_A \bar a_i$ by transitivity and so $\bar b\wind_A \bar a_i$ by finite character. Whence, we obtain the result by invariance. \qed

\subsection{Flat theories} Next we introduce a subclass of stable theories which includes the superstable ones. 
\begin{defn}
A stable theory is {\em flat} if for every finite tuple $a$ and every set $A$, there exists a finite subset $A_0$ of $A$ such that $a \wind_{A_0} A$.
\end{defn}

It follows from the definition of flatness and $\omega$-forking that any superstable theory is flat. Nevertheless, not every flat theory is superstable. The following exhibit can be seen as the archetypical example of flat non superstable theory.

\begin{expl}
Consider the first-order theory of countably many nested equivalence relations $\{E_i(x,y)\}_{i<\omega}$ such that $E_0(x,y)$ has infinitely many classes, and each $E_i$-class can be partitioned into infinitely many $E_{i+1}$-classes. This is a stable flat theory which is not superstable theory.
\end{expl}

The importance of flatness is that the foundation rank associated to the binary relation  of being an $\omega$-forking extension among finitary complete types over sets takes ordinal values. 

\begin{defn}
The $\wU$-rank is the least function from the collection of all types (with parameters from the monster model) to the set of ordinals or $\infty$ satisfying for every ordinal $\alpha$:
\begin{center}
$\wU(p) \ge \alpha+1$ if there is an $\omega$-forking extension $q$ of $p$ with $\wU(q)\ge \alpha$.
\end{center}
As usual, to easer notation we write $\wU(a/A)$ for $\wU(\tp(a/A))$.
\end{defn}
The $\wU$-rank is invariant under automorphism and clearly $\wU(p)\le \U(p)$ for any finitary complete type $p$. Since every type does not fork over a set of cardinality at most $|T|$, there are at most $2^{|T|}$ different $\U$-ranks and so at most $2^{|T|}$ different $\wU$-ranks. As these values form an initial segment of the ordinals, all of them  are smaller than $(2^{|T|})^+$. Thus, it follows that every type of $\wU$-rank $\infty$ has a forking extension of $\wU$-rank $\infty$. 

\begin{prop} The following holds:
\begin{enumerate}
\item If $q$ extends $p$, then $\wU(p)\ge \wU(q)$. Moreover, if $q$ is a non-$\omega$-forking extension of $p$, then $\wU(p)=\wU(q)$.
\item A theory is flat if and only if $\wU(p)<\infty$ for every finitary complete (real) type $p$.
\end{enumerate}
\end{prop}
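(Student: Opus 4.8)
The plan is to reproduce, in the setting of $\omega$-forking, the classical theory of the $\U$-rank, using only the formal properties of $\wind$ collected in Theorem~\ref{T:Main1} together with Remark~\ref{R:1}(1), which records that $\omega$-forking over a set persists over any subset. For the monotonicity part of~(1), I would show by induction on the ordinal $\alpha$ that whenever $q$ extends $p$ one has $\wU(q)\ge\alpha\Rightarrow\wU(p)\ge\alpha$; only the successor case is substantial. If $\wU(q)\ge\beta+1$, take an $\omega$-forking extension $r$ of $q$ with $\wU(r)\ge\beta$; then $r$ extends $p$, and as $r$ $\omega$-forks over $\mathrm{dom}(q)\supseteq\mathrm{dom}(p)$ it $\omega$-forks over $\mathrm{dom}(p)$ by Remark~\ref{R:1}(1), so $r$ itself witnesses $\wU(p)\ge\beta+1$. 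Hence $\wU(p)\ge\wU(q)$.

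For the \emph{Moreover} clause of~(1) -- the one genuinely technical point -- suppose $q=\tp(a/B)$ is a non-$\omega$-forking extension of $p=\tp(a/A)$, so $a\wind_A B$; in view of the previous paragraph it remains to prove $\wU(p)\ge\alpha\Rightarrow\wU(q)\ge\alpha$, again by induction on $\alpha$, the interesting case being $\alpha=\beta+1$. Given an $\omega$-forking extension $r$ of $p$ with $\wU(r)\ge\beta$, after applying an automorphism fixing $A$ I may take $r=\tp(a/C)$ for some $C\supseteq A$ with $\tp(a/C)$ $\omega$-forking over $A$. Using the extension property (Theorem~\ref{T:Main1}(5)) over the base $Aa$, replace $C$ by a copy $C'$ with $C'\equiv_{Aa}C$ and $C'\wind_{Aa}B$; then $\tp(a/C')$ is still an $\omega$-forking extension of $p$ with $\wU(\tp(a/C'))\ge\beta$. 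One now checks, purely by the formal properties of $\wind$ from Theorem~\ref{T:Main1} (symmetry, the transitivity axiom~(3), base monotonicity, and the standard derived right-transitivity and right-monotonicity coming from finite character) that $a\wind_{C'}B$; hence $\tp(a/BC')$ is a non-$\omega$-forking extension of $\tp(a/C')$ and so, by the induction hypothesis at $\beta$, $\wU(\tp(a/BC'))\ge\beta$. The same calculus shows $a\nwind_B C'$ -- otherwise, combining with $a\wind_A B$ one would get $a\wind_A C'$, contradicting that $\tp(a/C')$ $\omega$-forks over $A$ -- so $\tp(a/BC')$ is an $\omega$-forking extension of $q$, whence $\wU(q)\ge\beta+1$. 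Together with monotonicity this gives $\wU(p)=\wU(q)$.

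For~(2) it is enough to prove that $T$ fails to be flat if and only if some finitary (real) type has $\wU$-rank $\infty$. For the forward implication, let a finite tuple $a$ and a set $A$ witness the failure of flatness, so $a\nwind_{A_0}A$ for every finite $A_0\subseteq A$. If all the types $\tp(a/A_0)$, for finite $A_0\subseteq A$, had ordinal $\wU$-rank, choose such an $A_1$ with $\wU(\tp(a/A_1))$ minimal. For any finite $A_2$ with $A_1\subseteq A_2\subseteq A$, if $\tp(a/A_2)$ $\omega$-forked over $A_1$ it would be an $\omega$-forking extension of $\tp(a/A_1)$ of strictly smaller $\wU$-rank, contradicting minimality; thus $a\wind_{A_1}A_2$ for all such $A_2$, and by finite character of $\wind$ (Theorem~\ref{T:Main1}(2)) we get $a\wind_{A_1}A$, contradicting the choice of $A$. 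Hence some finitary $\tp(a/A_0)$ has $\wU$-rank $\infty$.

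Conversely, if $\wU(p_0)=\infty$ for some finitary type $p_0$, then -- using that the ordinal-valued $\wU$-ranks are bounded, so that every type of $\wU$-rank $\infty$ has an $\omega$-forking extension of $\wU$-rank $\infty$ (compare the remark after the definition of $\wU$) -- I would recursively build an increasing chain $p_0\subseteq p_1\subseteq p_2\subseteq\cdots$ of finitary types with each $p_{n+1}$ an $\omega$-forking extension of $p_n$ and $\wU(p_n)=\infty$ for all $n$. Writing $p_n=\tp(a/A_n)$ with $A_n\subseteq A_{n+1}$, $a\models\bigcup_n p_n$, and $B=\bigcup_n A_n$, the type $\tp(a/B)$ implies each $p_{n+1}$, hence $\omega$-forks over each $A_n$ by Remark~\ref{R:1}(1), hence over every finite subset of $B$ (each such subset lying in some $A_n$); so $a\nwind_C B$ for every finite $C\subseteq B$ and $T$ is not flat. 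That the equivalence may be restricted to real types, as in the statement, follows by a routine reduction to $T^{\mathrm{eq}}$ using the Lascar inequalities for $\wU$. I expect the \emph{Moreover} clause of~(1) to be the main obstacle: transporting a given $\omega$-forking extension of $p$ across a non-$\omega$-forking extension $q$ while keeping track of which independences persist is exactly where the full force of Theorem~\ref{T:Main1} is required, everything else being a formal manipulation of the rank combined with Remark~\ref{R:1}(1).
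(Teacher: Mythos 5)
Your argument is correct and is exactly the standard $\U$-rank argument that the paper invokes when it says the proof is ``standard and left to the reader'': monotonicity and the \emph{Moreover} clause of (1) by induction on the rank using extension, symmetry, transitivity and base monotonicity of $\wind$ from Theorem~\ref{T:Main1} together with Remark~\ref{R:1}(1), and (2) by the minimal-rank/forking-chain dichotomy, using the boundedness of the ordinal $\wU$-ranks noted just before the proposition. All the independence manipulations you sketch (in particular deriving $a\wind_{C'}B$ and $a\nwind_B C'$) do go through from the stated axioms, so there is nothing to add.
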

\begin{proof}
The proof is standard and it is left to the reader.
\end{proof}

\begin{remark}It follows from the definition of $\wU$-rank that a finitary complete type has $\wU$-rank zero if and only if it has no $\omega$-forking extensions. In particular, by the extension property we have that $\wU(a/A)= 0 $ if and only if $a\wind_A a$. 
\end{remark}

Recall that every ordinal $\alpha$ can be written in the Cantor normal form as a finite sum $\omega^{\alpha_1}\cdot n_1 + \ldots + \omega^{\alpha_k}\cdot n_k$ for ordinals $\alpha_1>\ldots>\alpha_k$ and natural numbers $n_1,\ldots,n_k$.  If additionally $\beta=\omega^{\alpha_1}\cdot m_1 + \ldots + \omega^{\alpha_k}\cdot m_k$, then the sum $\alpha\oplus\beta$, which is defined as $\omega^{\alpha_1}\cdot (n_1+m_1) + \ldots + \omega^{\alpha_k}\cdot (n_k+m_k)$, is commutative. In fact, the sum $\oplus$ is the smallest symmetric strictly increasing function $f$ among pairs of ordinals such that $f(\alpha,\beta+1)=f(\alpha,\beta)+1$. 

The proof of the following result is standard, see for instance \cite[Theorem 4]{CasWag}.
\begin{theorem}[Lascar Inequalities] The following holds:
\begin{enumerate}
\item $\wU(a/Ab) + \wU(b/A) \le \wU(ab/A) \le \wU(a/Ab) \oplus \wU(b/A)$.
\item If $\wU(a/Ab)<\infty$ and $\wU(a/A) \ge \wU(a/Ab) \oplus \alpha$ for some ordinal $\alpha$, then $\wU(b/A) \ge \wU(b/Aa) \oplus \alpha$.
\item If  $\wU(a/Ab)<\infty$ and $\wU(a/A) \ge \wU(a/Ab) + \omega^\alpha$ for some ordinal $\alpha$, then $\wU(b/A) \ge \wU(b/Aa) + \omega^\alpha$.
\item If $a\wind_A b$, then $\wU(ab/A) = \wU(a/A) \oplus \wU(b/A)$.
\end{enumerate}
\end{theorem}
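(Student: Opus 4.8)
\emph{Proof proposal.} The plan is to transcribe the classical proof of the Lascar inequalities for the $\U$-rank, checking that it uses only the abstract properties of $\wind$ collected in Theorem~\ref{T:Main1} --- invariance, finite character, transitivity, base monotonicity, extension and symmetry --- together with the two monotonicity facts for $\wU$ recorded above: the rank does not increase along an arbitrary (parameter) extension, and it is preserved along a non-$\omega$-forking one. In particular, algebraicity, which $\wind$ may lack, plays no role: the part of the $\U$-rank argument that in the classical setting rests on ``$\U(a/A)=0$ iff $a\in\acl(A)$'' is here carried by the definitional fact that $\wU(a/A)\ge\alpha+1$ iff $\tp(a/A)$ has an $\omega$-forking extension of $\wU$-rank $\ge\alpha$. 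It is convenient to isolate first two elementary observations. First, $\omega$-forking is transitive: if $\tp(b/AC)$ $\omega$-forks over $A$ and $\tp(b/AD)$ $\omega$-forks over $AC$ (with $C\subseteq D$), then $\tp(b/AD)$ $\omega$-forks over $A$; this is the contrapositive of base monotonicity for $\wind$ combined with Remark~\ref{R:1}(1). Second, as a consequence, whenever $\wU(p)$ is an ordinal and $\wU(p)\ge\gamma+1$ one may choose an $\omega$-forking extension $q$ of $p$ with $\wU(q)=\gamma$ exactly: pick such a $q$ of least possible $\wU$-rank $\ge\gamma$, and if that rank were $>\gamma$ transitivity of $\omega$-forking would yield a strictly smaller one still $\ge\gamma$.

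For part (1), assuming all ranks involved are ordinals (the $\infty$ cases being immediate from $\wU(ab/A)\ge\wU(a/Ab)$ and $\wU(ab/A)\ge\wU(b/A)$), the inequality $\wU(ab/A)\le\wU(a/Ab)\oplus\wU(b/A)$ is proved by induction on an ordinal $\beta$, showing that $\wU(ab/A)\ge\beta$ forces $\wU(a/Ab)\oplus\wU(b/A)\ge\beta$: at a successor $\beta=\gamma+1$ one takes $C$ with $\tp(ab/AC)$ $\omega$-forking over $A$ and $\wU(ab/AC)\ge\gamma$, applies the induction hypothesis over $AC$, and invokes the contrapositive of transitivity of $\wind$ to see that either $\tp(a/AbC)$ $\omega$-forks over $Ab$ (whence $\wU(a/Ab)\ge\wU(a/AbC)+1$) or $\tp(b/AC)$ $\omega$-forks over $A$ (whence $\wU(b/A)\ge\wU(b/AC)+1$); in both cases the identity $(\delta+1)\oplus\varepsilon=(\delta\oplus\varepsilon)+1$ closes the step. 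The reverse inequality $\wU(a/Ab)+\wU(b/A)\le\wU(ab/A)$ is an induction on $\wU(b/A)$: the case $\wU(b/A)=0$ follows from $\wU(ab/A)\ge\wU(ab/Ab)=\wU(a/Ab)$, using that enlarging the parameter set does not raise $\wU$ and that $a$ and $ab$ are interdefinable over $Ab$; for the successor $\wU(b/A)=\beta+1$ one takes, using the second observation, an $\omega$-forking extension $\tp(b/AC)$ over $A$ with $\wU(b/AC)=\beta$, uses the extension property of $\wind$ to arrange $a\wind_{Ab}C$ (so $\wU(a/AbC)=\wU(a/Ab)$), and applies the induction hypothesis over $AC$ to $\tp(ab/AC)$, which $\omega$-forks over $A$ since it implies $\tp(b/AC)$.

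Parts (2), (3) and the lower bound $\wU(ab/A)\ge\wU(a/A)\oplus\wU(b/A)$ in part (4) (its upper bound being immediate from part (1), since $a\wind_A b$ gives $\wU(a/Ab)=\wU(a/A)$) are the part of the statement carrying genuine ordinal-arithmetic content, and I would prove them by the standard simultaneous transfinite induction on pairs of ordinals ordered coordinatewise, exactly as in \cite{CasWag}, organised around the recursive description of $\oplus$ equivalent to the minimality property recalled just before the statement, namely $\gamma\oplus\delta=\sup\{(\gamma'\oplus\delta)+1,\ (\gamma\oplus\delta')+1:\gamma'<\gamma,\ \delta'<\delta\}$. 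A representative step for the lower bound in (4): to get $\wU(ab/A)\ge(\wU(a/A)\oplus\delta')+1$ for a fixed $\delta'<\wU(b/A)$, take (second observation) an $\omega$-forking extension $\tp(b/AC)$ over $A$ with $\wU(b/AC)=\delta'$, replace $a$ by $a'\equiv_{Ab}a$ with $a'\wind_A bC$ via the extension property (so $a'\wind_{AC}b$ by base monotonicity and $\wU(a'/AC)=\wU(a/A)$ since $a'\wind_A C$), and apply the induction hypothesis over $AC$; since $\tp(a'b/AC)$ $\omega$-forks over $A$, this gives $\wU(ab/A)=\wU(a'b/A)\ge\wU(a'b/AC)+1\ge(\wU(a/A)\oplus\delta')+1$, and the $\gamma'<\wU(a/A)$ clauses follow symmetrically using symmetry of $\wind$. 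Here part (3) is stated with $\omega^\alpha$ rather than a general ordinal because ordinary addition is not commutative, so only an additively indecomposable increment propagates through the induction. The main obstacle is exactly this ordinal bookkeeping: because $\oplus$ is not continuous in its arguments, a naive single-variable induction on $\wU(b/A)$ breaks down at limit stages, which is why one routes everything through the recursive/minimality characterization of $\oplus$; with that in hand, the rest is a routine transcription of the superstable case, since $\wind$ satisfies all the axioms that are used.
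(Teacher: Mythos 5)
Your proposal is correct and takes essentially the same route as the paper, which simply declares the proof standard and refers to \cite[Theorem 4]{CasWag}: you transcribe that classical argument and verify that it uses only the axioms of $\wind$ from Theorem~\ref{T:Main1} together with the monotonicity of $\wU$ under extensions and its invariance under non-$\omega$-forking ones. Your two preliminary observations (transitivity of $\omega$-forking via base monotonicity, and the existence of $\omega$-forking extensions of any prescribed smaller rank) correctly isolate the places where algebraicity is not needed, so nothing further is required.
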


We finish this section by pointing out the existence of a link between forking and $\omega$-forking via canonical bases and types of $\wU$-rank zero. 

\begin{prop}\label{P:CanBase}
If $a\wind_A b$, then $\wU(\cb(\stp(a/Ab))/A) = 0$. Furthermore, the opposite holds assuming that $\wU(a/A)<\infty$.
\end{prop}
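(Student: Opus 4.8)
The plan is to exploit the two directions of the equivalence $\wU(p)=0 \iff p$ has no $\omega$-forking extension, together with the standard properties of canonical bases in a stable theory.

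For the first assertion, suppose $a\wind_A b$. I want to show that $\wU(c/A)=0$ where $c=\cb(\stp(a/Ab))$. By the remark following the $\wU$-rank definition it suffices to prove $c\wind_A c$, i.e. that $\tp(c/A)$ has no $\omega$-forking extension; equivalently that $\stp(c/A)$ does not $k$-fork over $A$ for every $k$. Recall that $c\in\dcl^{eq}(b_1,\ldots,b_m)$ for a Morley sequence $(b_i)_{i<\omega}$ in $\stp(b/Ac)$ — more usefully, $c$ is interdefinable over $A$ with a finite tuple from such a Morley sequence, and each $b_i$ realizes $\stp(b/Ac)\subseteq\tp(b/A)$. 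The key point is that $a\wind_A b$ should transfer to $c$: using symmetry and base monotonicity from Theorem~\ref{T:Main1} together with the fact that $a$ is independent (in the forking sense) from $b$ over $Ac$ and $c\in\acl^{eq}(Ab)$, one first argues $a\wind_A c$, hence $c\wind_A a$ by symmetry; and since $c\in\dcl^{eq}(A,\bar b^*)$ for a suitable finite piece $\bar b^*$ of a Morley sequence over $Aa$, an application of base monotonicity and finite character gives $c\wind_A c$. I would carry this out by running the Morley-sequence argument through Lemma~\ref{L:SD-Indisc}: given an $A$-indiscernible sequence $I$, the non-$\omega$-forking of $\tp(a/\cdot)$ lets us discard $<k$ terms so the rest is indiscernible over $AJa$, and then since $c$ lies in $\dcl^{eq}$ of Morley-type data built from $a$ and independent copies of $b$, the same $J$ works for $c$.

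For the converse, assume $\wU(a/A)<\infty$ and $\wU(c/A)=0$ with $c=\cb(\stp(a/Ab))$; I must deduce $a\wind_A b$. Since $c\in\acl^{eq}(Ab)$ we have $\stp(a/Ab)$ is parallel to its restriction based on $Ac$, and $a\ind_{Ac} b$ in the usual sense. So it is enough to show $a\wind_{Ac} b$ forces $a\wind_A b$, i.e. to "absorb" the base $c$. Here is where $\wU(c/A)=0$ enters: $\tp(c/A)$ has no $\omega$-forking extension, in particular $\tp(c/Aa)$ is a non-$\omega$-forking extension of $\tp(c/A)$, so by the Proposition on $\wU$-rank, $\wU(c/Aa)=\wU(c/A)=0$. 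Combined with the Lascar inequalities (part (1)) applied to $ac$ over $A$, and the hypothesis $\wU(a/A)<\infty$, one gets $\wU(ac/A)=\wU(a/A)$ and then $\wU(a/Ac)=\wU(a/A)$, so $\tp(a/Ac)$ is a non-$\omega$-forking extension of $\tp(a/A)$; that is $a\wind_A c$. Finally, since $a\ind_{Ac}b$ in the forking sense, Lemma~\ref{L:k=1} gives that $\tp(a/Acb)$ does not $1$-fork over $Ac$, and combining $a\wind_A c$ (which handles the $k$-forking for all $k$ over $A$ toward $c$) with a transitivity argument (Lemma~\ref{L:Trans}, or Theorem~\ref{T:Main1}(3)) yields $a\wind_A cb$, hence $a\wind_A b$ by finite character and base monotonicity.

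The main obstacle I anticipate is the bookkeeping in the first direction: $\cb(\stp(a/Ab))$ is only an imaginary element, recovered from a \emph{long} Morley sequence, and one must show that the \emph{uniform} bound "discard fewer than $k$ terms" for $\tp(a/\cdot)$ descends to a uniform bound for $\tp(c/\cdot)$ — the number of terms one needs to discard to make an indiscernible sequence indiscernible over $Ac$ could a priori blow up because $c$ encodes data about many copies of $b$. The resolution should be that $c\in\dcl^{eq}$ of finitely many elements of the Morley sequence, so only the forking behaviour of those finitely many $b_i$'s (each independent over $Aa$) matters, and the relevant indiscernibility is controlled by the single tuple $a$ via Proposition~\ref{P:Equiv1}. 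The converse direction is comparatively routine once the rank arithmetic is set up, the only delicate point being to justify $\wU(a/Ac)=\wU(a/A)$ from $\wU(ac/A)=\wU(a/A)$ and finiteness, which is exactly part (1) of the Lascar inequalities together with $\wU(c/Aa)=0$.
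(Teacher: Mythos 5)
Your converse direction is correct and is essentially the paper's own argument: from $\wU(c/Aa)=0$ and the Lascar inequalities you get $\wU(a/Ac)=\wU(a/A)<\infty$, hence $a\wind_A c$; then $a\ind_{Ac}b$ gives $a\wind_{Ac}b$, and transitivity (plus symmetry and finite character) yields $a\wind_A b$. That half needs no repair.

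The forward direction, however, has a genuine gap, and it sits exactly where you predicted trouble. You propose to recover $c=\cb(\stp(a/Ab))$ from a Morley sequence in $\stp(b/Ac)$, i.e.\ from copies of $b$. That is the wrong canonical-base fact: the canonical base of $\stp(a/Ab)$ lies in the definable closure of a Morley sequence \emph{in the type $\stp(a/Ab)$ itself}, i.e.\ of copies of $a$ over the base $Ab$. The swap is fatal to your argument: any realization $b_i$ of $\stp(b/Ac)$ already satisfies $c\in\acl(Ab_i)$, so $b_i\nind_A c$ whenever $c\notin\acl(A)$, and no application of base monotonicity or finite character can extract $c\wind_A c$ from $c\wind_A a$ together with $c\in\dcl(A,\bar b^*)$ for such $\bar b^*$; the data you want to be independent from $c$ over $A$ each algebraically contain $c$. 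With the correct fact the proof is short and requires no second pass through Lemma~\ref{L:SD-Indisc}: let $\bar a=(a_i)_{i<\omega}$ be a Morley sequence in $\stp(a/Ab)$. Since $a_i\ind_{Ab}a_{<i}$ (so $a_i\wind_{Ab}a_{<i}$) and $a\wind_A b$, the axioms of Theorem~\ref{T:Main1} give $\bar a\wind_A b$. Now every finite subtuple $c_0$ of $c$ lies in $\dcl(\bar a)\cap\acl(Ab)$, whence $c_0\wind_A c_0$, i.e.\ $\wU(c_0/A)=0$; finite character of $\wind$ then gives $\wU(c/A)=0$. Your worry about a uniform bound on the number of discarded terms ``blowing up'' disappears once the Morley sequence consists of copies of $a$: the whole infinite sequence $\bar a$ is non-$\omega$-forking independent from $b$ over $A$ at once, so one never has to control indiscernibility for $c$ separately.
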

\begin{proof}
As non-$\omega$-forking independence has finite character, notice that the type $\tp(\cb(\stp(a/Ab))/A)$ has $\wU$-rank zero if and only if $\wU(c/A)=0$ for any finite tuple of $\cb(\stp(a/Ab))$. Now, suppose that $a\wind_A b$ and let $\bar a= (a_i)_{i<\omega}$ be a Morley sequence in $\stp(a/Ab)$. Thus one can easily see that $\bar a\wind_A b$ using Theorem \ref{T:Main1}. Since any finite tuple $c$ of $\cb(\stp(a/Ab))$ belongs to $\dcl(\bar a)\cap \acl(Ab)$, we then have $c\wind_A c$ and so $\wU(c/A)=0$. 

For the opposite, assume that $\wU(a/A)<\infty$ and set $C=\cb(\stp(a/Ab))$. Thus, by the Lascar inequalities 
$$
\wU(a/A) \le \wU(a/A,C) \oplus \wU(C/A) = \wU(a/A,C),
$$
so $\wU(a/A) = \wU(a/A,C)<\infty$ and hence $a\wind_A C$. Moreover, since $a\ind_C Ab$ we have that $a\wind_{CA} b$ and therefore $a\wind_A b$ by transitivity, as desired.
\end{proof}

\section{Searching for enough regular types}

\subsection{Types without $\omega$-forking extensions} As we point out before, a type has $\wU$-rank zero if and only if it has no $\omega$-forking extensions. In this section we shall see that these types play a fundamental role towards the existence of enough regular types in flat theories.

Let $\P$ be an $\emptyset$-invariant family of partial types. A stationary type $p\in S(A)$ is {\em foreign} to $\P$ if for all sets $B\supseteq A$ and all realizations $a$ of $p|B$ we have that $a\ind_B c$ for any $c$ such that $\tp(c/B)$ extends some member of $\P$. The type $p$ is {\em (almost) $\P$-internal} if there exists some $B\supseteq A$, a realization  $a\models p|B$ and some tuple $\bar b=(b_1,\ldots, b_n)$ such that $\bar a\in\dcl(B,\bar b)$ ($\bar a\in\acl(B,\bar b)$, respectively) and each type $\tp(b_i/B)$ extends a member of $\P$. Finally, it is {\em $\P$-analysable} in $\alpha$ steps if for some realization $a$ of $p$ there is a sequence $(a_i)_{i<\alpha}$ in $\dcl(A,a)$ such that each type $\tp(a_i/A,(a_j)_{j<i})$ is $\P$-internal, and $a\in\acl(A,(a_i)_{i<\alpha})$. 

The following result, see \cite[Corollary 7.4.6]{Pil}, plays an essential role in this section.
\begin{fact}\label{F:Int}
If the type $\stp(a/A)$ is not foreign to $\P$, then there is some imaginary element $a_0\in \dcl(Aa) \setminus \acl(A)$ such that  $\stp(a_0/A)$ is $\P$-internal. 
\end{fact}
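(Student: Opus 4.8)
This is Corollary~7.4.6 of \cite{Pil}; here is the line of argument I would follow. Unwinding the hypothesis, non-foreignness of $\stp(a/A)$ to $\P$ yields a set $B\supseteq A$, a realization of $\stp(a/A)|B$ which — after applying an automorphism over $A$ — we may take to be $a$ itself, so that $a\ind_A B$, together with a tuple $c$ such that $\tp(c/B)$ extends some member $\pi$ of $\P$ and $a\nind_B c$. A cheap but useful remark is that, since $c\models\pi$, any finite tuple of realizations of $\pi$ generates a $\P$-internal type over any base (the tuple witnesses its own internality); so the weak hypothesis that $\tp(c/B)$ merely \emph{extends} a member of $\P$ will be as good as its being $\P$-internal.

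I would first produce a non-algebraic $\P$-internal witness over $B$: put $c^*=\cb(\stp(c/Ba))$, so $c^*\in\dcl^{eq}(Ba)$, and, as $a\nind_B c$ forces $\stp(c/Ba)$ to fork over $B$, also $c^*\notin\acl^{eq}(B)$. Since $c^*$ lies in the definable closure of finitely many terms of a Morley sequence in $\stp(c/Ba)$, each of which still realizes $\pi$, the remark above gives that $\tp(c^*/B)$ is $\P$-internal; the same computation works over $A\bar b$ for a suitable \emph{finite} $\bar b\subseteq B$ with $c^*\in\dcl^{eq}(A\bar b\,a)$, so I may assume from now on that $B=A\bar b$ with $\bar b$ finite.

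The heart of the matter — and the step I expect to be the main obstacle — is to descend the base from $B$ to $A$, i.e.\ to get the required element into $\dcl^{eq}(Aa)$ rather than merely $\dcl^{eq}(Ba)$. I would take $a_0$ to be a finite part of $\cb(\stp(Bc^*/Aa))$. From $c^*\notin\acl^{eq}(B)$ together with $c^*\in\dcl^{eq}(Ba)$ one gets $a\nind_B c^*$, hence $a\nind_A Bc^*$, and so $a_0\notin\acl^{eq}(A)$; and plainly $a_0\in\acl^{eq}(Aa)$.

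It remains to check that $\stp(a_0/A)$ is $\P$-internal. Writing $a_0$ in the definable closure of an initial segment $(\bar b_i c^*_i)_{i<n}$ of a Morley sequence in $\stp(Bc^*/Aa)$ over $Aa$, each $\tp(c^*_i/A\bar b_i)$ is a conjugate of $\tp(c^*/B)$, hence $\P$-internal, and since $a\ind_A(\bar b_i)_{i<n}$ base monotonicity gives $c^*_i\ind_{A\bar b_i}(\bar b_j)_{j\ne i}$; so over $D=A(\bar b_i)_{i<n}$ the type $\tp(c^*_i/D)$ is a non-forking extension of a $\P$-internal type, hence $\P$-internal. As $\P$-internal types are closed under finite products and under passing to $\dcl^{eq}$-reducts, $\tp(a_0/D)$ is $\P$-internal; and since $a_0\in\acl^{eq}(Aa)$ and $a\ind_A D$ we have $a_0\ind_A D$, so a short transitivity argument upgrades this to $\P$-internality of $\stp(a_0/A)$. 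Replacing $a_0$ by the canonical parameter of the finite set of its $Aa$-conjugates then lands us inside $\dcl^{eq}(Aa)\setminus\acl^{eq}(A)$, with the strong type still $\P$-internal. The care in the argument lies in the bookkeeping around the definition of $\P$-internality — passing between ``a realization of the non-forking extension'' and the element itself by a conjugating automorphism, and the preservation of $\P$-internality under non-forking extension — and in tracking which objects live in $\dcl^{eq}(Aa)$; the remaining verifications are routine.
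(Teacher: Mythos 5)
The paper offers no proof of this statement to compare against: it is stated as a Fact and attributed to \cite[Corollary 7.4.6]{Pil}. Your plan is essentially the standard Hrushovski--Pillay argument and I believe it can be completed, but let me flag the points where it is fragile. (i) $\cb(\stp(c/Ba))$ lies in $\acl(Ba)$, not in general in $\dcl(Ba)$; this is harmless (you pass from $\acl$ to $\dcl$ at the end anyway), but the inference to $a\nind_B c^*$ should be run with $\acl$. Similarly, $c^*_i\ind_{A\bar b_i}(\bar b_j)_{j\neq i}$ is not literally base monotonicity applied to $a\ind_A(\bar b_i)_{i<n}$: you also need $c^*_i\in\acl(A\bar b_i a)$ so that $c^*_i$ rides along with $a$ in $a\ind_{A\bar b_i}(\bar b_j)_{j\neq i}$. (ii) The reduction to finite $\bar b$ has a genuine bookkeeping issue: after shrinking $B$ to $A\bar b$, the types $\tp(c_j/A\bar b)$ of the Morley-sequence terms need no longer extend the member $\pi$ of $\P$ if $\pi$ has parameters outside $A\bar b$, so ``the same computation works over $A\bar b$'' is not automatic. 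Both the detour through $c^*$ and this reduction, together with the auxiliary closure properties of internality you invoke (finite products, $\dcl$-reducts, internality to internal types), can be avoided: take $a_0$ directly as a finite part of $\cb(\stp(Bc/Aa))$ not in $\acl(A)$ (which exists because $a\ind_A B$ and $a\nind_B c$ give $Bc\nind_A a$), write $a_0\in\dcl((B_ic_i)_{i<n})$ for an initial segment of a Morley sequence in $\stp(Bc/Aa)$, and observe that $D=A\cup\bigcup_{i<n}B_i$ together with the tuple $(c_i)_{i<n}$ already witnesses $\P$-internality of $\stp(a_0/A)$ outright: $a_0\ind_A D$ since each $B_i\ind_A a$ and the $B_i$ are independent over $\acl(Aa)$; $a_0\in\dcl(D,(c_i)_{i<n})$; and each $\tp(c_i/D)$ extends the ($A$-conjugate) member of $\P$ that $\tp(c_i/B_i)$ extends, because $D\supseteq B_i$. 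Your closing move, replacing $a_0$ by the code of its finite set of $Aa$-conjugates, is the right way to land in $\dcl(Aa)\setminus\acl(A)$ and is still needed in the streamlined version.
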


Let $\P_0$ denote the family of types of $\wU$-rank zero. It is easy to see that any finitary complete type which is $\P_0$-analysable in finitely many steps must have $\wU$-rank zero by the Lascar inequalities. Consequently, we obtain the following:

\begin{lemma}
If the type $\stp(a/A)$ is not foreign to $\P_0$, then there is some imaginary element $a_0\in \dcl(Aa) \setminus \acl(A)$ such that $\wU(a_0/A)=0$.
\end{lemma}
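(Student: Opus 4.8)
The statement to prove is the immediate corollary of Fact \ref{F:Int} applied to the family $\P_0$ of types of $\wU$-rank zero: if $\stp(a/A)$ is not foreign to $\P_0$, then there is some imaginary $a_0 \in \dcl(Aa)\setminus\acl(A)$ with $\wU(a_0/A)=0$.

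The plan is as follows. First I would apply Fact \ref{F:Int} directly with $\P=\P_0$: since $\stp(a/A)$ is not foreign to $\P_0$, there is an imaginary $a_0 \in \dcl(Aa)\setminus\acl(A)$ such that $\stp(a_0/A)$ is $\P_0$-internal. So the only thing left to check is that $\P_0$-internality (in fact even finite $\P_0$-analysability) forces $\wU$-rank zero; this is exactly the observation stated just before the lemma, and I would spell it out. Suppose $\stp(a_0/A)$ is $\P_0$-internal. Then there is $B\supseteq A$, a realization $a_0'\models \stp(a_0/A)|B$ and finitely many $b_1,\ldots,b_n$ with $a_0'\in\dcl(B,\bar b)$ and each $\tp(b_i/B)$ extending a type of $\wU$-rank zero. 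Since $\wU$-rank zero means having no $\omega$-forking extensions and this is inherited by extensions (by the first part of the Proposition on $\wU$-rank, $q\supseteq p$ implies $\wU(q)\le\wU(p)$), each $\tp(b_i/B)$ itself has $\wU$-rank zero.

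Next I would bound $\wU(a_0'/B)$. By repeated use of the left-hand Lascar inequality, $\wU(b_1\cdots b_n/B) \le \wU(b_1/B)\oplus\cdots\oplus\wU(b_n/B) = 0$, so $\wU(\bar b/B)=0$. Since $a_0'\in\dcl(B\bar b)$, we have $\wU(a_0'/B)\le \wU(\bar b/B)=0$ (as $\tp(a_0'/B)$ is a quotient-type, or more directly: $\tp(a_0'/B)$ is interdefinable over $B$ with a restriction of $\tp(\bar b/B)$, hence has no more $\omega$-forking extensions than $\tp(\bar b/B)$ — formally $\wU(a_0'/B)\le\wU(a_0'\bar b/B)=\wU(\bar b/B)=0$ using the Lascar inequalities again and $a_0'\in\dcl(B\bar b)$). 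Finally $\wU(a_0/A)=\wU(a_0'/A)\ge\wU(a_0'/B)$ — wait, that inequality goes the wrong way; rather, since $a_0'\models\stp(a_0/A)|B$ I want $\wU(a_0/A)=\wU(a_0'/A)$, and I need $\wU(a_0'/A)=0$. But $\wU(a_0'/A)\ge\wU(a_0'/B)$ is the wrong direction, so instead I argue: a type of $\wU$-rank $>0$ has an $\omega$-forking extension, and by the extension property for $\omega$-forking (Remark \ref{R:1}(4), or rather its contrapositive phrased via $\wU$-rank), if $\wU(a_0'/A)\ge 1$ then it stays $\ge 1$ over some extension; more cleanly, I use that $\wU(a_0'/A)\le \wU(a_0'/B)\oplus\wU(B/A)$? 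No. The clean statement is simply: $\wU$-rank of a type is at most the $\wU$-rank of any of its restrictions — that is false in general.

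Let me reconsider: the correct fact is that $\P_0$-internality of $\stp(a_0/A)$ together with the Lascar inequalities gives $\wU(a_0/A)=0$; the mechanism is that $a_0\in\dcl(Aa)$ and one works over $A$ throughout by noting $\P_0$-analysability is witnessed inside $\dcl$ over the base after moving the base, and $\wU$-rank zero is preserved. The cleanest route is: any $\P_0$-internal (or finitely $\P_0$-analysable) type over $A$ has $\wU$-rank $0$, proven by induction on the analysis length using $\wU(x/A)\le\wU(x/Ay)\oplus\wU(y/A)$ and the fact that $\wU$-rank zero passes to non-forking restrictions and to elements of $\dcl$; I would just cite "by the Lascar inequalities" as the paper does and present the induction. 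I expect the main (only) obstacle is this bookkeeping: making sure the $\P_0$-internality witnessed over a larger base $B$ can be pushed back to yield $\wU(a_0/A)=0$ rather than merely $\wU(a_0/B)=0$ — handled by observing that $\wU$-rank zero is absolute (a type has an $\omega$-forking extension over its base iff it has one over any extension of the base, by extension), so $\wU(a_0/A)=0\iff\wU(a_0/B)=0$ for $a_0\models\stp(a_0/A)|B$, which follows from Remark \ref{R:1}(4) applied to $\omega$-forking. Everything else is a direct invocation of Fact \ref{F:Int}.
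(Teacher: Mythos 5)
Your proposal is correct and follows the paper's own route exactly: invoke Fact \ref{F:Int} with $\P=\P_0$ and then check that $\P_0$-internality forces $\wU$-rank zero via the Lascar inequalities, which is precisely the observation the paper makes immediately before stating the lemma. The only step you circle around --- transferring $\wU(a_0'/B)=0$ back to $\wU(a_0/A)=0$ --- is most cleanly justified by noting that $a_0'\ind_A B$ and non-forking implies non-$\omega$-forking, so $\tp(a_0'/B)$ is a non-$\omega$-forking extension of $\tp(a_0'/A)$ and the two types have the same $\wU$-rank by the Proposition on $\wU$-rank.
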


Given a set $A$, set $\cl_{\P_0}(A)$ to be the set of all elements $b$ such that $\tp(b/A)$ has $\wU$-rank zero. By \cite[Corollary 6]{Udi1} we obtain the following decomposition lemma, see also \cite[Corollary 6]{CasWag}. For the sake of completeness we give a (direct) proof. 
\begin{lemma}\label{L:Decomp}
For any tuple $a$ and any set $A$, the type $\stp(a/A_0)$ is foreign to $\P_0$, where $A_0 = \dcl(A,a) \cap \cl_{\P_0}(A)$. Moreover, it has the same $\wU$-rank as $\tp(a/A)$.
\end{lemma}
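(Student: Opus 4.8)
The plan is to prove the two assertions separately: first that $\stp(a/A_0)$ is foreign to $\P_0$, then that it has the same $\wU$-rank as $\tp(a/A)$.

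For foreignness, I would argue by contradiction using Fact \ref{F:Int} (in the form of the lemma just stated). Suppose $\stp(a/A_0)$ is not foreign to $\P_0$. Then there is an imaginary $a_0 \in \dcl(A_0 a) \setminus \acl(A_0)$ with $\wU(a_0/A_0) = 0$. The key point is to promote this to show $a_0 \in \cl_{\P_0}(A)$, which will contradict $a_0 \notin \acl(A_0)$ once we check $a_0 \in \dcl(A,a)$ and $\dcl(A,a)\cap\cl_{\P_0}(A) = A_0$. Since $A_0 \subseteq \dcl(A,a)$, we have $a_0 \in \dcl(A_0 a) \subseteq \dcl(A,a)$. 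To see $\wU(a_0/A) = 0$: by the Lascar inequalities applied to the pair $a_0$ and (a finite tuple from) $A_0$ over $A$, using that every finite tuple $c$ from $A_0$ satisfies $\wU(c/A) = 0$ by definition of $\cl_{\P_0}$, we get $\wU(a_0/A) \le \wU(a_0/A_0) \oplus \wU(\text{finite part of }A_0/A) = 0 \oplus 0 = 0$ — here I need $\wU(a_0/A_0) = \wU(a_0/\text{finite part of }A_0)$, which holds by finite character of non-$\omega$-forking. Hence $a_0 \in \cl_{\P_0}(A) \cap \dcl(A,a) = A_0$, so $a_0 \in \acl(A_0)$ — contradicting the choice of $a_0$.

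For the rank equality, clearly $\wU(a/A_0) \le \wU(a/A)$ since $A_0 \supseteq A$, wait — actually $A_0 \supseteq A$ so $\tp(a/A_0)$ extends $\tp(a/A)$ and $\wU(a/A_0) \le \wU(a/A)$. For the reverse inequality I would use the Lascar inequalities again: $\wU(a/A) \le \wU(a/A_0)$ should follow from $\wU(a/A) \le \wU(a/A_0) \oplus \wU(A_0/A)$ together with $\wU(c/A) = 0$ for every finite tuple $c$ from $A_0$ — but one must be slightly careful because $A_0$ need not be finite, so I would invoke finite character: any witness to $\wU(a/A) > \wU(a/A_0)$ lives over a finite part of $A_0$, reducing to the finitary Lascar inequality $\wU(a/A) \le \wU(a/A \bar c) \oplus \wU(\bar c/A) = \wU(a/A\bar c)$ for finite $\bar c \subseteq A_0$, and then $\wU(a/A\bar c) = \wU(a/A_0)$ would need the monotonicity $\wU(a/A_0) \le \wU(a/A\bar c)$ plus a cofinality/limit argument over the directed system of finite subsets of $A_0$. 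Alternatively, and more cleanly, I expect one only needs $\wU(a/A) = \wU(a/A_0)$ up to the inequalities and the fact that $\wU$-rank does not increase under adding rank-zero parameters.

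The main obstacle is the bookkeeping around $A_0$ being an infinite (definably closed) set rather than a finite tuple: every application of the Lascar inequalities is stated for tuples, so each use requires a finite-character reduction, and one must make sure that passing from $A$ to a finite $\bar c \subseteq A_0$ and then to all of $A_0$ genuinely preserves the rank. I would handle this by first establishing the purely finitary statement — if $\wU(\bar c/A) = 0$ then $\wU(a/A) = \wU(a/A\bar c)$ — directly from the Lascar inequalities and monotonicity, and then taking a union/cofinality argument over finite subtuples of $A_0$, using that the $\wU$-ranks involved are all $< \infty$ in the relevant cases (or handling the $\infty$ case trivially, since then both sides are $\infty$). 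Everything else is a routine combination of Fact \ref{F:Int}, finite character of $\wind$ from Theorem \ref{T:Main1}, and the definition of $\cl_{\P_0}$.
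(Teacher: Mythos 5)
Your proposal is correct and follows essentially the same route as the paper: get $a_0\in\dcl(A_0,a)\setminus\acl(A_0)$ with $\wU(a_0/A_0)=0$, push this down to $\wU(a_0/A)=0$ via $\wU(A_0/A)=0$ and the Lascar inequalities, and conclude $a_0\in A_0\subseteq\acl(A_0)$, a contradiction; the rank equality is likewise obtained from $\wU(A_0/A)=0$. The finiteness bookkeeping you worry about is resolved exactly as you suggest (finite character of $\wind$ gives $a\wind_A A_0$, whence equality of ranks by the proposition on non-$\omega$-forking extensions), and the paper treats this point just as informally.
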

\begin{proof}
Suppose that $\stp(a/A_0)$ is not foreign to the family of types of $\wU$-rank zero. Thus, there is some $a_0\in \dcl(A_0,a)\setminus \acl(A_0)$ such that $\tp(a_0/A_0)$ is internal to the family of types of $\wU$-rank zero. That is, there are is some $C\ind_{A_0} a$ and some $b_1,\ldots,b_n$ with $\wU(b_i/A_0C)=0$ such that $a_0 \in \dcl(A_0C,b_1,\ldots,b_n)$. Hence we have that $\wU(a_0/A_0) = 0$. On the other hand, notice that $a_0\in \dcl(A,a)$ by definition of $A_0$ and moreover that $\wU(A_0/A) = 0$ since any finite tuple of elements from $A_0$ has $\wU$-rank zero over $A$ again by Lascar inequalities. Thus 
$$
\wU(a_0/A) \le \wU(a_0A_0/A) \le \wU(a_0/A_0 )\oplus \wU(A_0/A) = 0
$$
and so $a_0\in A_0$, a contradiction. Finally, the second part of the statement follows once more by the Lascar inequalities since $\wU(A_0/A)=0$.
\end{proof}

\begin{defn}
We say that a complete type is {\em $\omega$-minimal} if every forking extension of it is also an $\omega$-forking extension. 
\end{defn}

\begin{lemma}\label{L:ExtMin}
A non-forking extension of an $\omega$-minimal type is again $\omega$-minimal.
\end{lemma}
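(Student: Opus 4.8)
The plan is to unwind the definitions and run a ``descend to $A$, apply $\omega$-minimality there, climb back to $B$'' argument. Let $p\in S(A)$ be $\omega$-minimal and let $q=\tp(a/B)$ be a non-forking extension of it, so $A\subseteq B$ and $a\ind_A B$. I must show that every forking extension $r=\tp(a/C)$ of $q$ (with $C\supseteq B$) $\omega$-forks over $B$. Note that flatness is not needed anywhere.

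First I would record two easy facts about the base $A$. Since non-forking implies non-$\omega$-forking (as noted in the introduction; alternatively this is immediate from Remark \ref{R:1}(1) and Lemma \ref{L:k=1}), from $a\ind_A B$ we get $a\wind_A B$. Next, since $r$ forks over $B\supseteq A$ while $q=\tp(a/B)$ does not fork over $A$, transitivity of ordinary forking forces $r$ to fork over $A$ as well (if $r$ did not fork over $A$, transitivity would give both that $q$ does not fork over $A$ and that $r$ does not fork over $B$, contradicting the choice of $r$). Thus $r$ is a forking extension of $p$, and $\omega$-minimality of $p$ gives that $r$ $\omega$-forks over $A$; that is, $a\nwind_A C$.

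It then remains to deduce $a\nwind_B C$, which is precisely the statement that $r$ $\omega$-forks over $B$. Suppose not, so $a\wind_B C$. Together with $a\wind_A B$ and the inclusions $A\subseteq B\subseteq C$, a base-transitivity property of $\wind$ would give $a\wind_A C$, contradicting the previous paragraph. I expect this base transitivity to be the only genuine point of the proof, since it is not literally one of the clauses of Theorem \ref{T:Main1}; but it follows from the clauses that are there. Indeed, by symmetry the two hypotheses become $B\wind_A a$ and $C\wind_B a$, and since $A\subseteq B$ the latter reads $C\wind_{AB} a$; applying transitivity (Theorem \ref{T:Main1}(3), with base $A$, the set $B$ in the role of the adjoined parameter, and $a$ on the right) yields $CB\wind_A a$, which is $C\wind_A a$ because $B\subseteq C$, and one further use of symmetry returns $a\wind_A C$. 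The passage between sets and finite tuples needed to apply the tuple forms of symmetry and transitivity is absorbed by finite character (Theorem \ref{T:Main1}(2)). This contradiction shows that $r$ $\omega$-forks over $B$, and hence that $q$ is $\omega$-minimal.
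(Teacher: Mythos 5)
Your proof is correct and takes essentially the same route as the paper's: the paper likewise deduces $a\nind_A BB'$ from transitivity of ordinary forking, invokes $\omega$-minimality of $p$ to get $a\nwind_A BB'$, and then concludes $a\nwind_B B'$ ``by transitivity since $a\wind_A B$''. The only difference is that you spell out how this base-transitivity of $\wind$ is extracted from symmetry, transitivity and finite character in Theorem \ref{T:Main1}, a step the paper leaves implicit.
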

\begin{proof} To see this, let $q$ be a non-forking extension of an $\omega$-minimal type $p$ with parameters over $A$. Assume that $q=p|B$ and consider a forking extension $q'$ of $q$ over a set $B'$. Let $a$ be a realization of $q'$. Notice that $a\nind_B B'$ and $a\ind_A B$. Thus $a\nind_A BB'$ and so $a\nwind_A BB'$ since $p=\tp(a/A)$ is $\omega$-minimal. Moreover, we obtain that $a\nwind_B B'$ by transitivity since $a\wind_A B$, yielding that $q'$ is an $\omega$-forking extension of $q=\tp(a/B)$. Thus, the type $q$ is also $\omega$-minimal.
\end{proof}

\begin{remark}\label{R:Reg} If an $\omega$-minimal  type $p$ has ordinal $\wU$-rank, then every forking extension of it has strictly smaller $\wU$-rank. Hence, using the Lascar inequalities it is easy to see that any $\omega$-minimal stationary type of monomial $\wU$-rank is regular. Namely, if $p$ is an $\omega$-minimal stationary type with $\wU(p)=\omega^\alpha$ but there is a forking extension $p'$ of $p$ which is non-orthogonal to $p$, then there is set $A$ and realizations $a$ of $p|A$ and  $a'$ of $p'|A$ with $a\nind_A a'$. However, this implies that $\wU(a/Aa')<\wU(a/A)=\omega^\alpha$ since $\tp(a/A) = p|A$ is $\omega$-minimal and also that $\wU(a'/A) = \wU(p') < \wU(p) = \omega^\alpha$, yielding that 
$$
\omega^\alpha = \wU(a/A) \le \wU(a/Aa') \oplus \wU(a'/A) < \omega^\alpha, 
$$
a contradiction. 
\end{remark}

\begin{prop}\label{P:Min-For}
A stationary type is $\omega$-minimal if and only if it is foreign to $\P_0$.
\end{prop}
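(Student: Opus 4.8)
The plan is to prove both implications by unwinding the definitions of $\omega$-minimality and foreignness to $\P_0$, the latter via the characterization of non-foreignness given by Fact \ref{F:Int} together with the lemma immediately following it.

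First I would prove the easier direction, that an $\omega$-minimal stationary type $p\in S(A)$ is foreign to $\P_0$. Suppose not; then by the lemma following Fact \ref{F:Int} there is, after passing to a realization $a\models p|B$ for some $B\supseteq A$, an imaginary element $a_0\in\dcl(Ba)\setminus\acl(B)$ with $\wU(a_0/B)=0$. The point is that $a\nind_B a_0$ (since $a_0\notin\acl(B)$ but $a_0\in\dcl(Ba)$), so $\tp(a/Ba_0)$ is a forking extension of $\tp(a/B)$; here I should be a little careful and note that $\tp(a/B)$ is $\omega$-minimal as well, since by Lemma \ref{L:ExtMin} a non-forking extension of an $\omega$-minimal type is $\omega$-minimal and $p|B$ is such. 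Hence $\tp(a/Ba_0)$ is an $\omega$-forking extension, i.e. $a\nwind_B a_0$. By symmetry (Theorem \ref{T:Main1}(7)) we get $a_0\nwind_B a$, so $\tp(a_0/Ba)$ is an $\omega$-forking extension of $\tp(a_0/B)$, forcing $\wU(a_0/B)\ge 1$, contradicting $\wU(a_0/B)=0$.

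For the converse, suppose $p\in S(A)$ is foreign to $\P_0$ but not $\omega$-minimal, so there is a forking extension of $p$ that is not $\omega$-forking: there are $B\supseteq A$ and $a\models p|B$... more precisely there is some $B$ and some $a$ realizing $p$ with $a\nind_A B$ but $a\wind_A B$. Set $C=\cb(\stp(a/AB))$. Since $a\wind_A B$, Proposition \ref{P:CanBase} gives $\wU(C/A)=0$, i.e. every finite tuple from $C$ has $\wU$-rank zero over $A$, so $C\subseteq\cl_{\P_0}(A)$ and in particular $\tp(C/A)$ is (almost) $\P_0$-internal — indeed it is already covered by $\P_0$. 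On the other hand $a\nind_A C$: this is because $a\ind_C AB$ (definition of canonical base) so $a\nind_A B$ would fail if $a\ind_A C$ held, using base monotonicity/transitivity of ordinary forking. Thus a realization of $p|A$ (namely $a$) is not independent over $A$ from a tuple $C$ whose type over $A$ extends a member of $\P_0$, contradicting that $p$ is foreign to $\P_0$.

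The main obstacle I anticipate is the bookkeeping around which base set one works over and the passage between $p$ and its non-forking restrictions: foreignness is phrased for the stationary type over its base, so in the first direction one must invoke Lemma \ref{L:ExtMin} to transfer $\omega$-minimality to $p|B$, and in the second direction one must be careful that the witness to non-$\omega$-minimality can be taken with $a$ realizing $p$ itself (rather than some forking extension) by moving the base — using that any forking extension's base can be absorbed and that $\wU$ and $\omega$-forking are invariant. Everything else is a direct combination of symmetry of $\wind$ (Theorem \ref{T:Main1}), Proposition \ref{P:CanBase}, and the characterization of non-foreignness via Fact \ref{F:Int}.
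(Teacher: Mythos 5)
Your proof is correct and follows essentially the same route as the paper's: the forward direction converts forking into $\omega$-forking via $\omega$-minimality and then uses symmetry of $\wind$ against the $\wU$-rank-zero hypothesis, and the converse uses the canonical base together with Proposition \ref{P:CanBase} to produce a rank-zero tuple violating foreignness. The only (immaterial) differences are that you invoke Fact \ref{F:Int} where the paper just unwinds the definition of foreignness, and you work with $\cb(\stp(a/AB))$ where the paper uses $\cb(\stp(b/Aa))$.
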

\begin{proof}
Assume first that $p$ is $\omega$-minimal but it is not foreign to the family of type of $\wU$-rank zero. Thus, there is some set $A$, some realization $a$ of $p|A$ and some tuple $\bar b=(\bar b_1,\ldots,b_n)$ with each $\tp(b_i/A)$ of $\wU$-rank zero such that $a\nind_A \bar b$. As $p$ is $\omega$-minimal, then so is $\tp(a/A)$ and so $a\nwind_A \bar b$. It then follows that $\bar b\nwind_A a$ by symmetry and so $\wU(\bar b/A) >0$, a contradiction.

For the other direction, suppose  towards a contradiction that $p=\tp(a/A)$ is foreign to $\P_0$ but there is some tuple $b$ such that $a\nind_A b$ and $a\wind_A b$. We then have that  $\cb(\stp(b/Aa))$ is not algebraic over $A$ and so $\cb(\stp(b/Aa))\nind_{A} a$. Consequently, there is some finite tuple $c\in \cb(\stp(b/Aa))$ such that $a\nind_{A} c$ and so $\wU(c/A) > 0$, since $p=\tp(a/A)$ is foreign to $\P_0$. On the other hand, as $b\wind_A a$, Proposition \ref{P:CanBase} yields that $\wU(c/A) = 0$, a contradiction. Therefore the type $\tp(a/A)$ is an $\omega$-minimal extension of $p$.
\end{proof}

As a consequence of Lemma \ref{L:Decomp} and Proposition \ref{P:Min-For} we obtain the following:
\begin{cor}\label{C:Ext}
Any stationary type $p=\tp(a/A)$ has an $\omega$-minimal extension of the same $\wU$-rank, namely the type $\tp(a/\dcl(Aa)\cap \cl_{\P_0}(A))$.
\end{cor}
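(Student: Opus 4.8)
The plan is to read the statement straight off Lemma~\ref{L:Decomp} and Proposition~\ref{P:Min-For}; the proof is little more than assembling these two facts and checking that the set $A_0 = \dcl(Aa) \cap \cl_{\P_0}(A)$ really contains the base $A$, so that $\tp(a/A_0)$ is genuinely an extension of $p=\tp(a/A)$.

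First I would verify $A \subseteq A_0$. The inclusion $A \subseteq \dcl(Aa)$ is trivial, and for each $b \in A$ the type $\tp(b/A)$ is realized by an $A$-definable element, hence has no forking extension and a fortiori no $\omega$-forking extension; thus $\wU(b/A)=0$, so $b \in \cl_{\P_0}(A)$. Therefore $A \subseteq \cl_{\P_0}(A)$, whence $A \subseteq A_0$ and $\tp(a/A_0)\restriction A = p$.

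Next, Lemma~\ref{L:Decomp} applied to $a$ and $A$ gives precisely that $\stp(a/A_0)$ is foreign to $\P_0$ and that $\wU(a/A_0)=\wU(a/A)$. Since a strong type is stationary, Proposition~\ref{P:Min-For} applies and yields that $\stp(a/A_0)$ is $\omega$-minimal. Combining the two conclusions, $\tp(a/A_0)$ is an $\omega$-minimal extension of $p$ of the same $\wU$-rank, as claimed.

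I do not expect any genuine obstacle: the real content has already been carried by the results of this subsection. The only point worth a remark is the harmless bookkeeping around $\tp$ versus $\stp$ over $A_0$ (which disappears if one takes $A$ algebraically closed in $\M^{eq}$, or simply records that the relevant object is the strong type), together with the routine observation that the parameters of $p$ lie in $A_0$.
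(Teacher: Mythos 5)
Your proposal is correct and follows exactly the paper's route: the paper derives this corollary directly from Lemma~\ref{L:Decomp} (foreignness to $\P_0$ and preservation of $\wU$-rank over $A_0$) together with Proposition~\ref{P:Min-For} (foreign to $\P_0$ iff $\omega$-minimal), giving no further argument. Your added check that $A\subseteq A_0$, so that $\tp(a/A_0)$ is genuinely an extension of $p$, is a harmless and correct piece of bookkeeping left implicit in the paper.
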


The next result shows the existence of many regular types in a flat theory.

\begin{theorem}\label{T:Main2}
If the type $p$ has rank $\wU(p)=\beta + \omega^\alpha n$, with $n>0$ and $\beta \ge\omega^{\alpha+1}$ or $\beta=0$, then it has a non-$\omega$-forking extension $q$ which is not weakly orthogonal to an $\omega$-minimal regular type of $\wU$-rank $\omega^\alpha$.
\end{theorem}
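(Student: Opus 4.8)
The plan is to isolate, inside a suitable non-$\omega$-forking extension of $p$, a coordinate whose type has $\wU$-rank exactly $\omega^\alpha$ and which is $\omega$-minimal, and then invoke Remark~\ref{R:Reg} to conclude it is regular. First I would replace $p$ by its $\omega$-minimal extension of the same $\wU$-rank given by Corollary~\ref{C:Ext}; so without loss of generality $p=\tp(a/A)$ is $\omega$-minimal with $\wU(p)=\beta+\omega^\alpha n$. The idea is to produce, over some extension of $A$, an $\omega$-forking extension of $p$ that drops the rank down through the $\omega^\alpha n$ part. Concretely, I would choose $b$ with $a\nwind_A b$ such that $\wU(a/Ab)$ is as large as possible subject to being strictly smaller than $\wU(a/A)$; the hypothesis on the Cantor normal form (that the tail is $\omega^\alpha n$ with $n>0$, and the rest of $\beta$ lies at least at level $\alpha+1$) is exactly what guarantees that the maximal such value is $\beta+\omega^\alpha(n-1)$, so that the ``jump'' $\wU(a/A)\ominus\wU(a/Ab)$ has size $\omega^\alpha$. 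This is the step where the arithmetic of $\oplus$ and Cantor normal forms does the real work.

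Next I would analyse the canonical base. Set $c=\cb(\stp(a/Ab))$ (or rather a finite subtuple carrying the dependence); since $a\nwind_A b$, Proposition~\ref{P:CanBase} fails to give $\wU(c/A)=0$, and in fact $\omega$-minimality of $p$ forces $a\nwind_A c$ and $\wU(a/Ac)=\wU(a/Ab)$. By the Lascar inequalities the type $\tp(c/A)$ then has $\wU$-rank lying between $1$ and something controlled by the jump; combined with symmetry ($c\wind$-dependence mirrors $a\wind$-dependence) and the choice of $b$ maximising $\wU(a/Ab)$, I would extract from $c$ an element $d$ over a suitable base $A'\supseteq A$ with $\wU(d/A')=\omega^\alpha$. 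The maximality in the choice of $b$ is what pins the rank of this coordinate to be exactly the monomial $\omega^\alpha$ rather than something smaller or with several Cantor terms.

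It remains to make $\tp(d/A')$ $\omega$-minimal. Here I would apply Corollary~\ref{C:Ext} again to $\tp(d/A')$: it has an $\omega$-minimal extension of the same $\wU$-rank $\omega^\alpha$, say $\tp(d/A'')$ with $A''=\dcl(A'd)\cap\cl_{\P_0}(A')$. By Remark~\ref{R:Reg}, an $\omega$-minimal stationary type of monomial $\wU$-rank $\omega^\alpha$ is regular; call it $r$. Finally I would bundle the construction into a single non-$\omega$-forking extension $q$ of $p$: extending $A$ to a base over which $a$ realises a non-$\omega$-forking extension of $p$ and over which $d$ (hence $r$) is defined, the dependence $a\nwind_A b$ transported along this extension witnesses that $q$ is not weakly orthogonal to $r$ (realisations of $q$ and of $r$ are forking-dependent over the common base). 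The main obstacle I anticipate is the bookkeeping in the second paragraph: showing that the canonical base really contributes a coordinate of rank precisely $\omega^\alpha$ — one has to use the $\omega$-minimality of $p$ (so that forking = $\omega$-forking on extensions of $p$, keeping ranks additive in the right way), the extremal choice of $b$, and the Lascar inequalities simultaneously, and an off-by-one in the Cantor normal form would break the argument; this is precisely where the somewhat unusual hypothesis ``$\beta\ge\omega^{\alpha+1}$ or $\beta=0$'' is consumed.
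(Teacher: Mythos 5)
Your overall strategy --- drop the rank by one $\omega^\alpha$-block, pass to the canonical base, locate a base over which that coordinate has $\wU$-rank exactly $\omega^\alpha$ and is $\omega$-minimal, and invoke Remark~\ref{R:Reg} --- is the same as the paper's. But there are two genuine problems. First, the element $b$ you want cannot in general be obtained by maximising $\wU(a/Ab)$ among values strictly below $\wU(a/A)$: when $\alpha>0$ the ordinal $\beta+\omega^\alpha n$ is a limit and, by the intermediate-value property of foundation ranks, every smaller ordinal occurs as the rank of some extension, so there is no maximal such value (already for $\wU(p)=\omega$ there are extensions of every finite rank). One must simply \emph{choose} $b$ with $\wU(a/Ab)=\beta+\omega^\alpha(n-1)$, which exists; and since no maximality is available, your later appeal to ``the maximality in the choice of $b$'' to pin the rank of the new coordinate at $\omega^\alpha$ has no content. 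The paper instead derives $\wU(b/A)\ge\omega^\alpha$ from Lascar inequality (3) and then picks a \emph{further base} $B$ with $\wU(b/B)=\omega^\alpha$ exactly.

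Second, and more seriously, the concluding step is asserted rather than proved. The whole difficulty of the theorem is that the two requirements on the final base $B_0$ pull against each other: you need $a\wind_A B_0$ (so that $q=\tp(a/B_0)$ is a non-$\omega$-forking extension of $p$) while simultaneously $a\nind_{B_0}b$ (so that $q$ is not weakly orthogonal to the regular type $\tp(b/B_0)$). Saying the dependence is ``transported along this extension'' does not address this tension. The paper's proof handles it by (i) replacing $b$ by $\cb(\stp(a/Ab))$, so that $a\nind_{B_0}b$ is automatic once $\tp(b/B_0)$ is non-algebraic; (ii) choosing $B\ind_{Ab}a$ (independence over $Ab$, not over $A$), so that $\wU(a/B_0 b)=\beta+\omega^\alpha(n-1)$ and the canonical base are preserved; and (iii) the computation $\omega^\alpha=\wU(b/B_0)\ge\wU(b/B_0 a)+\omega^\alpha$ (from $\omega$-minimality of $\tp(b/B_0)$), which via Lascar inequality (3) gives $\wU(a/B_0)\ge\wU(a/B_0 b)+\omega^\alpha=\wU(a/A)$ and hence $a\wind_A B_0$. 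None of this is in your sketch, and it is exactly where the theorem is proved. (Your preliminary reduction to $\omega$-minimal $p$ via Corollary~\ref{C:Ext} is harmless but not needed here; the paper reserves that move for Corollary~\ref{C:Reg}.)
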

\begin{proof} Let $p=\tp(a/A)$ and suppose that $\wU(a/A)=\beta + \omega^\alpha n$ with $n>0$ and $\beta \ge\omega^{\alpha+1}$ or $\beta=0$. Let $b$ be a tuple such that $\wU(a/Ab)= \beta + \omega^\alpha (n-1)$ and set $b'$ to be $\cb(\stp(a/Ab))$. Since $a\ind_{b'} Ab$, we have that $a\wind_{Ab'} Ab$ and so $\wU(a/Ab')=\wU(a/Ab)$. Thus, we may assume that $b'=b$.

The Lascar inequalities yield that $\wU(b/A) \ge\omega^\alpha$, and so we can find some set $B$ with $\wU(b/B)=\omega^\alpha$. Moreover, note that we may take $B$ containing $A$ in a way that $B\ind_{Ab} a$ and $B=\acl(B)$. Now, by Corollary \ref{C:Ext}, we know that $\tp(b/B_0)$ is $\omega$-minimal and has $\wU$-rank $\omega^\alpha$, where $B_0=\dcl(Bb)\cap\cl_{\P_0}(B)$. Furthermore, since $B_0\subseteq \dcl(Bb)$ we have that $B_0\ind_{Ab} a$ and so 
$$
\wU(a/B_0,b) = \wU(a/A,b) = \beta + \omega^\alpha (n-1)
$$ and 
$$
b=\cb(\stp(a/Ab))=\cb(\stp(a/B_0b)).
$$ 
Observe that since $\omega^\alpha = \wU(b/B_0)$, the type $\tp(b/B_0)$ cannot be algebraic and so $a\nind_{B_0} b$. As  $\tp(b/B_0)$ is $\omega$-minimal we then have $
\wU(b/B_0,a) < \wU(b/B_0)=\omega^\alpha
$ and hence $\omega^\alpha = \wU(b/B_0) \ge \wU(b/B_0a) + \omega^\alpha$. Whence 
$$
\wU(a/B_0) \ge \wU(a/B_0b) +\omega^\alpha = \beta + \omega^\alpha (n-1) +\omega^\alpha = \wU(a/A)
$$
by the Lascar inequalities and so $a\wind_A B_0$. Since $\tp(b/B_0)$ is $\omega$-minimal of monomial $\wU$-rank, it is regular by Remark \ref{R:Reg}. This finishes the proof.
\end{proof}

\begin{cor}\label{C:Reg}
If $p$ is foreign to $\P_0$ and $\wU(p)=\beta + \omega^\alpha n$, with $n>0$ and $\beta \ge\omega^{\alpha+1}$ or $\beta=0$, then $p$ is non-orthogonal to an $\omega$-minimal regular type of $\wU$-rank $\omega^\alpha$. 
\end{cor}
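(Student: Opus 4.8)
The plan is to reduce directly to Theorem \ref{T:Main2}, exploiting that $p$, being foreign to $\P_0$, is $\omega$-minimal. First I would invoke Proposition \ref{P:Min-For}: since $p$ is foreign to $\P_0$, it is $\omega$-minimal, so \emph{every} forking extension of $p$ is an $\omega$-forking extension. Read contrapositively, this says that every non-$\omega$-forking extension of $p$ is automatically a non-forking extension.

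Next I would feed the hypothesis $\wU(p) = \beta + \omega^\alpha n$, with $n > 0$ and $\beta \ge \omega^{\alpha+1}$ or $\beta = 0$, into Theorem \ref{T:Main2}. This produces a non-$\omega$-forking extension $q$ of $p$ that is not weakly orthogonal to some $\omega$-minimal regular type $r$ with $\wU(r) = \omega^\alpha$. By the first step, $q$ is in fact a non-forking extension of $p$.

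To conclude, I would use two standard facts: first, that failing to be weakly orthogonal implies failing to be orthogonal, so $q \not\perp r$; and second, that non-orthogonality passes down along non-forking extensions, so $p \not\perp r$ as well. Since $r$ is $\omega$-minimal, regular, and of $\wU$-rank $\omega^\alpha$, this is exactly the assertion of the corollary.

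I do not anticipate any genuine obstacle — the real work is all in Theorem \ref{T:Main2} and Proposition \ref{P:Min-For}. The one point that deserves care is that Theorem \ref{T:Main2} a priori only returns a non-$\omega$-forking extension, which need not be non-forking in general; it is precisely $\omega$-minimality of $p$ (equivalently, foreignness to $\P_0$) that licenses upgrading this to a non-forking extension, and hence transferring non-orthogonality from $q$ back down to $p$.
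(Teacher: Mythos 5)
Your argument is correct and coincides with the paper's own proof: both apply Theorem \ref{T:Main2} to obtain a non-$\omega$-forking extension $q$ that is not weakly orthogonal to an $\omega$-minimal regular type of $\wU$-rank $\omega^\alpha$, and then use Proposition \ref{P:Min-For} to see that $\omega$-minimality of $p$ upgrades $q$ to a non-forking extension, so that non-orthogonality descends to $p$. No gaps.
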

\begin{proof}
By Theorem \ref{T:Main2} there exists a non-$\omega$-forking extension $q$ of $p$ which is not weakly orthogonal to an $\omega$-minimal regular type $q'$ of $\wU$-rank $\omega^\alpha$. Since $p$ is also $\omega$-minimal by Proposition \ref{P:Min-For}, the type $q$ is indeed a non-forking extension of $p$ and so $p$ is not orthogonal to $q'$.
\end{proof}



\begin{remark} So far all results given in this section follow from the fact that the $\omega$-forking independence is an independence relation (in the sense of Theorem \ref{T:Main1}) with a well-behaved notion of rank. In other words, if in a stable theory we have an independence relation $\ind^*$ then one can define the corresponding notions of $\U_*$-rank, $*$-minimality, $*$-flatness and all results of this section adapt to this context.
\end{remark}


\subsection{Hereditarily triviality} In this subsection we awill show that types which are not foreign to $\P_0$ must have finite weight. For this, we introduce the following notion.

Let $\lambda$ denote an arbitrary cardinal. 

\begin{defn}
A partial type $\pi$ over $A$ is {\em hereditarily $\lambda$-trivial} if for any $a$ realizing $\pi$, any set $B\supseteq A$ and any independent sequence $I$ over $B$, there is some  $J\subseteq I$ with $|J|<\lambda$ for which  $aJ \ind_B I\setminus J$. 
\end{defn}

Observe that any hereditarily $\lambda$-trivial complete type has weight strictly smaller than $\lambda$. However, in Exercise 3.17 \cite[Chapter V]{SheClas} it is given an example of a finite weight type $p$ which is not hereditarily $w(p)$-trivial.  

\begin{expl}\label{E:1}
Consider an infinite vector space over a finite field, and let $I$ be a linearly independent set. Fix a finite set $J\subseteq I$ with $|J|>1$ and let $a=\sum_{x\in J} x$. Then there is no finite subset $J'$ of $I$ with $|J'| \le w(a) = 1$ such that $I\setminus J'$ is independent from $J' a$.  
\end{expl}

Now, we show some basic lemmata on hereditarily trivial types.

\begin{lemma}\label{L:Her1}
Assume $a\ind_A B$ with $A\subseteq B$. If $\tp(a/B)$ is hereditarily $\lambda$-trivial, then so is $\tp(a/A)$.
\end{lemma}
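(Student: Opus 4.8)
The plan is to prove the contrapositive-style statement directly: assuming $a \ind_A B$ with $A \subseteq B$ and that $\tp(a/B)$ is hereditarily $\lambda$-trivial, I want to show $\tp(a/A)$ is hereditarily $\lambda$-trivial. So I fix an arbitrary realization $a$ of $\tp(a/A)$ — by invariance I may as well keep working with the given $a$, noting that any other realization is conjugate over $A$ and I can transport $B$ along the conjugation so that $a \ind_A B$ still holds. Then I fix a set $C \supseteq A$ and an $A$-independent sequence $I$ inside the monster, and I must produce $J \subseteq I$ with $|J| < \lambda$ such that $aJ \ind_A I \setminus J$.

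First I would move the base from $A$ up to $B$ while keeping $a$ independent from the new data. Using extension (and invariance) for ordinary forking, I can find $B' \equiv_A B$ with $B' \ind_A aIC$; replacing $B$ by $B'$, I still have $a \ind_A B$, and now additionally $B \ind_A aIC$. In particular $\tp(a/B)$ is still hereditarily $\lambda$-trivial (same type up to conjugacy over $A$, and the property is invariant). The point of arranging $B \ind_A aIC$ is twofold: first, $I$ remains independent over $B$ (since $B \ind_A I$ and $I$ is $A$-independent, transitivity of nonforking gives that $I$ is $B$-independent, i.e. $\bar a_s \ind_{B \cup \{\bar a_t : t<s\}} \ldots$ in the usual enumeration sense); second, $C \subseteq \acl$-data that is independent from $B$ over $A$, which will let me descend back down at the end.

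Now apply hereditary $\lambda$-triviality of $\tp(a/B)$ to the $B$-independent sequence $I$: I get $J \subseteq I$ with $|J| < \lambda$ and $aJ \ind_B I \setminus J$. Combined with $B \ind_A aI$ (hence $B \ind_A aJ(I\setminus J)$, and by base monotonicity $B \ind_{A} (I\setminus J)$ over $A$ and also $aJ \ind_A B$), transitivity of nonforking yields $aJ \ind_A B(I \setminus J)$, and then base monotonicity gives $aJ \ind_A I \setminus J$ — which is exactly what is needed, since $J$ has the right size. The arrangement $B \supseteq A$ and the independence $B \ind_A aIC$ are what make the transitivity step go through cleanly. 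Since $C$ was an arbitrary superset of $A$ and $I$ an arbitrary $A$-independent sequence, and $a$ an arbitrary realization, $\tp(a/A)$ is hereditarily $\lambda$-trivial.

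The step I expect to require the most care is the bookkeeping in the final transitivity computation: one must be careful about which independences are "over $A$" versus "over $B$", and apply base monotonicity and transitivity of ordinary nonforking in the correct order, making sure the conclusion $aJ \ind_A I\setminus J$ really falls out rather than something weaker like $a \ind_A (I\setminus J)$ without the $J$. There is also a minor subtlety in ensuring that after replacing $B$ by a conjugate $B'$ the sequence $I$ is genuinely $B'$-independent in the sense that the hereditary-triviality definition expects (an independent sequence over $B'$); this follows from $B' \ind_A I$ together with $A$-independence of $I$ by transitivity, but it should be stated explicitly. None of these steps is deep — the lemma is essentially a transitivity exercise — so I would keep the write-up short.
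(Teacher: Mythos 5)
There is a genuine gap, and it lies in what you set out to verify rather than in the forking calculus. Hereditary $\lambda$-triviality of $\tp(a/A)$ requires, for \emph{every} set $C\supseteq A$ and every sequence $I$ that is independent \emph{over $C$}, some $J$ with $|J|<\lambda$ and $aJ\ind_C I\setminus J$: the base of both the independence hypothesis on $I$ and of the conclusion is the arbitrary superset $C$, not $A$. You fix $C\supseteq A$ but then take $I$ to be $A$-independent and aim for $aJ\ind_A I\setminus J$, so your $C$ plays no role in the conclusion. What you prove is only the instance $C=A$ of the defining condition, a strictly weaker property (essentially a bounded-weight statement over $A$), whereas the quantifier over all larger bases is precisely what makes the notion ``hereditary'' and what Lemma~\ref{L:Her2} and Proposition~\ref{P:AnHT} later rely on. The repair is routine and lands you on the paper's proof: after arranging $B'\ind_{Aa}CI$ (hence $B'\ind_A aCI$), observe that $I$ remains independent over $CB'$, apply hereditary $\lambda$-triviality of $\tp(a/B')$ with auxiliary base $CB'\supseteq B'$ to get $aJ\ind_{CB'}I\setminus J$, and then descend to base $C$ (not $A$) using $I\setminus J\ind_C B'$ and transitivity.

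A secondary point: you choose $B'\equiv_A B$ with $B'\ind_A aIC$ and assert that $\tp(a/B')$ is conjugate to $\tp(a/B)$. A conjugation over $A$ moves $a$, and $a\ind_A B$ together with $a\ind_A B'$ and $B\equiv_A B'$ does not by itself give $aB\equiv_A aB'$ (a stationarity issue). Take instead $B'\equiv_{Aa}B$ with $B'\ind_{Aa}CI$, as the paper does; then $aB'\equiv_A aB$ holds outright, and $B'\ind_A aCI$ follows by transitivity from $a\ind_A B'$.
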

\begin{proof} Let $I$ be an independent sequence over $C\supseteq A$, and consider a set $B'$ such that $B'\equiv_{Aa} B$ and $B'\ind_{Aa} CI$. Thus $B'\ind_A CI$ by transitivity and invariance, and so the sequence $I$ is independent over $C\cup B'$. As $\tp(a/B)$ is hereditarily $\lambda$-trivial, so is $\tp(a/B')$ and hence there exists some subset $J$ of $I$ with $|J|<\lambda$ such that $I\setminus J\ind_{B'C} a J$. On the other hand, as $B'\ind_A CI$ we have that $B'\ind_C I$ and so the sequence $I\setminus J$ is independent from $aJ$ over $C$ by transitivity, as desired. \end{proof} 

\begin{lemma}\label{L:Her2}
 If $\tp(a/A)$ is hereditarily $\lambda_1$-trivial, and $\tp( b/A, a)$ is hereditarly $\lambda_2$-trivial, then $\tp( a b/A)$ is hereditarily $(\lambda_1+\lambda_2)$-trivial. 
\end{lemma}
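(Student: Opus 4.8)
The plan is to prove the additivity of hereditary triviality by imitating the proof of Lemma~\ref{L:Trans}: peel off at most $\lambda_1$ elements using the triviality of $\tp(a/A)$, then at most $\lambda_2$ more using the triviality of $\tp(b/Aa)$, and combine the two exceptional subsets. Concretely, fix a realization $ab$ of $\tp(ab/A)$, a set $B\supseteq A$, and an independent sequence $I$ over $B$. First I would apply hereditary $\lambda_1$-triviality of $\tp(a/A)$ — hence of $\tp(a/B)$, after noting $B\supseteq A$ — to the sequence $I$ over $B$, obtaining $J_1\subseteq I$ with $|J_1|<\lambda_1$ and $aJ_1\ind_B I\setminus J_1$.

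The second step is the delicate one: I want to apply hereditary $\lambda_2$-triviality of $\tp(b/Aa)$ to the sequence $I\setminus J_1$, but that sequence must be independent over the relevant base, and the base should contain $Aa$. The natural base is $B\cup J_1\cup\{a\}$. From $aJ_1\ind_B I\setminus J_1$ we get by symmetry and base monotonicity that $I\setminus J_1$ is independent over $BaJ_1$; moreover $BaJ_1\supseteq Aa$, so hereditary $\lambda_2$-triviality of $\tp(b/Aa)$ passes to $\tp(b/BaJ_1)$. Hence there is $J_2\subseteq I\setminus J_1$ with $|J_2|<\lambda_2$ and $bJ_2\ind_{BaJ_1}(I\setminus J_1)\setminus J_2$. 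Set $J=J_1\cup J_2$, so $|J|<\lambda_1+\lambda_2$ and $I\setminus J=(I\setminus J_1)\setminus J_2$.

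It remains to assemble $abJ\ind_B I\setminus J$. From the second step, $bJ_2\ind_{BaJ_1}I\setminus J$, i.e. $bJ\ind_{BaJ_1}I\setminus J$ after adjoining $J_1$ to both the left side and the base (which is harmless). From the first step, $aJ_1\ind_B I\setminus J_1$, and since $I\setminus J\subseteq I\setminus J_1$ this gives $aJ_1\ind_B I\setminus J$ by monotonicity on the right. Transitivity of ordinary forking independence then yields $abJ\ind_B I\setminus J$, which is exactly hereditary $(\lambda_1+\lambda_2)$-triviality of $\tp(ab/A)$. The main obstacle is bookkeeping with the bases — making sure that when I pass from $\tp(b/Aa)$ to $\tp(b/BaJ_1)$ the hypothesis "$I\setminus J_1$ independent over $BaJ_1$" is genuinely available, which is why I first invoke symmetry to turn $aJ_1\ind_B I\setminus J_1$ into independence of the sequence over the enlarged base; everything else is a routine application of the standard transitivity, monotonicity and symmetry of forking in a stable theory.
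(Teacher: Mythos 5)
Your proof is correct and follows essentially the same route as the paper's: peel off $J_1$ using the triviality of $\tp(a/A)$, note that $I\setminus J_1$ is then independent over $BaJ_1$, peel off $J_2$ using the triviality of $\tp(b/Aa)$, and glue the two independences by transitivity. The only cosmetic difference is that you phrase the intermediate step as passing to $\tp(a/B)$ and $\tp(b/BaJ_1)$, whereas the definition of hereditary triviality already quantifies over all supersets of the base, so this detour is not needed.
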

\begin{proof} Consider an independent sequence $I$ over a set $B\supseteq A$. As $\tp(a/A)$ is hereditarily $\lambda_1$-trivial, there exists some $J_1\subseteq I$ with $|J_1|<\lambda_1$ and $J_1 a\ind_B I\setminus J_1$. Thus, the sequence $I\setminus J_1$ is independent over $B aJ_1$. Since $\tp(b/A,a)$ is hereditarily $\lambda_2$-trivial, we can find a subset $J_2\subseteq I\setminus J_1$ such that $J_2 b\ind_{BaJ_1} I\setminus (J_1\cup J_2)$ and $|J_2|<\lambda_2$. Therefore, we get $J_1J_2 a b\ind_B I\setminus (J_1\cup J_2)$ by transitivity. Hence, the type $\tp(a b/A)$ is hereditarily $(\lambda_1+\lambda_2)$-trivial, as desired.  \end{proof}

As a consequence we obtain:

\begin{prop}\label{P:AnHT}
Suppose that $\lambda$ is infinite. A finitary type analysable in the family of hereditarily $\lambda$-trivial types is itself hereditarily $\lambda$-trivial.
\end{prop}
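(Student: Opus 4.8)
The plan is to prove the statement by transfinite induction on the length $\alpha$ of an analysis, after reducing a single analysis step to manageable pieces via Lemmas~\ref{L:Her1} and~\ref{L:Her2}. Throughout write $\P_\lambda$ for the family of hereditarily $\lambda$-trivial partial types, and let $p=\tp(a/A)$ with $a$ a finite tuple be $\P_\lambda$-analysable, say via $(a_i)_{i<\alpha}$. I will use freely two observations, both immediate from the definition: an algebraic type is hereditarily $1$-trivial (take $J=\emptyset$), and if a partial type $\pi$ over $A$ is hereditarily $\lambda$-trivial and $\pi'\vdash\pi$ is a partial type over some $A'\supseteq A$, then $\pi'$ is again hereditarily $\lambda$-trivial, since every realization of $\pi'$ realizes $\pi$ while the base only grows. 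I will also use that hereditary $\lambda$-triviality passes to subtuples, again by inspection of the definition.

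The crucial point is that every \emph{almost $\P_\lambda$-internal} type is hereditarily $\lambda$-trivial. Given such a type $\tp(c/A)$, I would fix a witness $B\supseteq A$ with $c\ind_AB$, a finite tuple $\bar b=(b_1,\dots,b_n)$ with $c\in\acl(B\bar b)$ and each $\tp(b_l/B)$ extending a member of $\P_\lambda$ — hence hereditarily $\lambda$-trivial by the observation above. Applying Lemma~\ref{L:Her2} $n$ times (using that a finite sum of copies of $\lambda$ is $\lambda$ since $\lambda$ is infinite, and re-using the base-extension observation at each step to keep the relevant types hereditarily $\lambda$-trivial over the growing base) gives that $\tp(\bar b/B)$ is hereditarily $\lambda$-trivial; since $\tp(c/B\bar b)$ is algebraic, a further application of Lemma~\ref{L:Her2} makes $\tp(c\bar b/B)$, hence $\tp(c/B)$, hereditarily $\lambda$-trivial; and finally Lemma~\ref{L:Her1}, applied with $c\ind_AB$, yields that $\tp(c/A)$ is hereditarily $\lambda$-trivial.

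With this in hand the induction is straightforward in all but one case. For $\alpha=0$ the type is algebraic. If $\alpha$ is a limit ordinal, finite character of algebraic closure applied to the finite tuple $a$ produces a finite $F\subseteq\alpha$ with $a\in\acl(A(a_i)_{i\in F})$, so that $(a_i)_{i<\sup F+1}$ is an analysis of $p$ in fewer than $\alpha$ steps and the induction hypothesis applies. If $\alpha=\gamma+1$ and already $a\in\acl(A(a_i)_{i<\gamma})$, the same remark gives a shorter analysis. In the remaining case $\alpha=\gamma+1$ with $a\notin\acl(A(a_i)_{i<\gamma})$, I would set $b=(a_i)_{i<\gamma}$ and $c=a\,a_\gamma$; then $\tp(c/Ab)$ is almost $\P_\lambda$-internal (the step $\tp(a_\gamma/Ab)$ is $\P_\lambda$-internal and $a\in\acl(Ab\,a_\gamma)$), hence hereditarily $\lambda$-trivial by the crucial point. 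Taking $d=\cb(\stp(c/Ab))\subseteq\acl(Ab)$, one has $c\ind_{Ad}\acl(Ab)$, so Lemma~\ref{L:Her1} makes $\tp(c/Ad)$, and hence $\tp(a/Ad)$, hereditarily $\lambda$-trivial; on the other hand $d\subseteq\acl(A(a_i)_{i<\gamma})$, so $\tp(d/A)$ is $\P_\lambda$-analysable in $\gamma<\alpha$ steps, to which the induction hypothesis should apply. Since $d\subseteq\acl(Aa)$, Lemma~\ref{L:Her2} applied to $\tp(d/A)$ and $\tp(a/Ad)$ then gives that $\tp(ad/A)$, and so $\tp(a/A)$, is hereditarily $\lambda$-trivial.

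The one genuinely delicate point, which I expect to be the main obstacle, is this last case: the canonical base $d=\cb(\stp(c/Ab))$ need not be a finite tuple (and the example of an infinite independent sequence of weight-one types shows that $\P_\lambda$-analysability of an infinite tuple does \emph{not} in general force hereditary $\lambda$-triviality), so invoking the induction hypothesis on $\tp(d/A)$ requires the inductive statement to be set up over a class of tuples large enough to contain such canonical bases yet still restrictive enough for the conclusion to hold. One then has to check that the algebraic, limit and successor steps above still run for this larger class, and that the infiniteness of $\lambda$ continues to absorb the repeated sums of the triviality parameter produced along the way.
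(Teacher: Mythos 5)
Your treatment of a single (almost) internal step is correct and coincides with the first paragraph of the paper's proof: iterate Lemma~\ref{L:Her2} along the finite witnessing tuple $\bar b$ (absorbing the finite sums of $\lambda$ since $\lambda$ is infinite) and pull back along $c\ind_A B$ with Lemma~\ref{L:Her1}. For analyses of \emph{finite} length your induction also closes, and there the canonical-base detour is unnecessary: apply the inductive hypothesis to the finite tuple $(a_i)_{i<\gamma}$ and use Lemma~\ref{L:Her2} once more. The genuine gap is exactly the case you flag and do not resolve. When $\gamma$ is infinite, both $b=(a_i)_{i<\gamma}$ and $d=\cb(\stp(c/Ab))$ are in general infinite tuples (in a stable non-superstable theory canonical bases of finitary types need not be interalgebraic with finite tuples), the inductive statement is only about finitary types, and, as you yourself observe, the statement is false for infinite tuples (an independent sequence of length $\lambda$ of realizations of a hereditarily $\lambda$-trivial type is analysable but not hereditarily $\lambda$-trivial). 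So the transfinite induction does not close as set up, and no enlarged class of tuples for which it would close is exhibited. A secondary unaddressed point: the claim that $\tp(d/A)$ is analysable ``in $\gamma$ steps'' is not immediate from the definition, since the analysing sequence $(a_i)_{i<\gamma}$ need not lie in $\dcl(A,d)$; one needs the standard (but unproved here) fact that analysability passes to tuples in $\acl$ of an analysable set.

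The paper avoids all of this by a much blunter move: it asserts that, since $a$ is a finite tuple, the ordinal $\alpha$ in the analysis may be taken to be a natural number, after which finitely many applications of Lemma~\ref{L:Her2} finish the argument. In other words, the ingredient missing from your write-up is precisely a reduction of an arbitrary analysis of a \emph{finitary} type to one of finite length; your limit-stage truncation via finite character of $\acl$ only reduces an analysis to successor length, not to finite length. If you supply that reduction, your proof collapses to the paper's; without it, the successor case with $\gamma$ infinite remains open.
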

\begin{proof}
Firstly we show that a finitary type $p=\tp(a/A)$ that is internal to a family of hereditarily $\lambda$-trivial types is itself hereditarily $\lambda$-trivial. To do so, suppose that there is some set $B$ with $a\ind_A B$ and some tuple $\bar b=(b_1,\ldots,b_n)$ with $\tp(b_i/B)$ hereditarily $\lambda$-trivial such that $a\in \dcl(B,\bar b)$. It follows from the definition that $\tp(b_i/B,b_{<i})$ is also hereditarily $\lambda$-trivial and so is $\tp(\bar b/B)$ by Lemma \ref{L:Her2}. Again it follows easily from the definition that $\tp(a/B)$ is also hereditarily $\lambda$-trivial, and then so is $\tp(a/A)$ by Lemma \ref{L:Her1}. 

Now, suppose that $p=\tp(a/A)$ is analysable in a family of hereditarily $\lambda$-trivial types. By definition, there is a sequence $(a_i)_{i<\alpha}$ in $\dcl(A,a)$ such that each type $\tp(a_i/A,(a_j)_{j<i})$ is internal to the given family of hereditarily $\lambda$-trivial types, and $a\in\acl(A,(a_i)_{i<\alpha})$. We have just seen in the paragraph above that each $\tp(a_i/A,(a_j)_{j<i})$ is hereditarily $\lambda$-trivial. Hence, as $a$ is a finite tuple, we have that $\alpha$ is indeed a natural number and so applying $\alpha$ many times Lemma \ref{L:Her2} we obtain that $\tp((a_i)_{i<\alpha}/A)$ is also  hereditarily $\lambda$-trivial. Whence, the type $\tp(a/A)$ is hereditarily $\lambda$-trivial as well since $a\in\acl(A,(a_i)_{i<\alpha})$.
\end{proof}

For an infinite cardinal $\lambda$, let $\P_{\rm ht,\lambda}$ be the family of all hereditarily $\lambda$-trivial types. It follows from the result above that that given a set $A$, the set $\cl_{\P_{\mathrm{ht},\lambda}}(A)$ of all tuples $b$ such that $\tp(b/A)$ is hereditarily $\lambda$-trivial is a closure operator. Alternatively, this can be easily seen using Lemma \ref{L:Her1} and \ref{L:Her2}.

Similarly as in Lemma \ref{L:Decomp} (or by \cite[Corollary 6]{Udi1}) we get the existence of types foreign to $\P_{\mathrm{ht},\lambda}$.

\begin{cor}
For any tuple $a$ and any set $A$, the type $\tp(a/A_0)$ is foreign to the family of hereditarily $\lambda$-trivial types, where $A_0=\dcl(A,a)\cap \cl_{\P_{\mathrm{ht},\lambda}}(A)$. 
\end{cor}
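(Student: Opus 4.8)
The plan is to follow the proof of Lemma~\ref{L:Decomp} almost verbatim, with the $\wU$-rank bookkeeping (which there rested on the Lascar inequalities) replaced by the single fact, already recorded before the statement, that $\cl_{\P_{\mathrm{ht},\lambda}}$ is a closure operator, i.e.\ that it is monotone and idempotent. Concretely: argue by contradiction, produce via Fact~\ref{F:Int} an imaginary $a_0$ witnessing non-foreignness, use Proposition~\ref{P:AnHT} to see that $a_0$ lies in $\cl_{\P_{\mathrm{ht},\lambda}}(A_0)$, and then collapse $\cl_{\P_{\mathrm{ht},\lambda}}(A_0)$ onto $\cl_{\P_{\mathrm{ht},\lambda}}(A)$.

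\textbf{Steps.} Suppose toward a contradiction that $\stp(a/A_0)$ is not foreign to $\P_{\mathrm{ht},\lambda}$. By Fact~\ref{F:Int} applied with base $A_0$ and $\P=\P_{\mathrm{ht},\lambda}$, there is an imaginary element $a_0\in\dcl(A_0,a)\setminus\acl(A_0)$ such that $\stp(a_0/A_0)$ is $\P_{\mathrm{ht},\lambda}$-internal. Since being internal is a special case of being analysable (in one step), and since $\lambda$ is infinite, Proposition~\ref{P:AnHT} gives that $\tp(a_0/A_0)$ is hereditarily $\lambda$-trivial; equivalently, $a_0\in\cl_{\P_{\mathrm{ht},\lambda}}(A_0)$. (If one prefers to keep track of $\stp$ versus $\tp$: $\tp(a_0/\acl A_0)$ is the one produced hereditarily $\lambda$-trivial, and then $\tp(a_0/A_0)$ is as well by Lemma~\ref{L:Her1}, as $a_0\ind_{A_0}\acl(A_0)$.)

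\textbf{Conclusion and main point.} By the very definition of $A_0=\dcl(A,a)\cap\cl_{\P_{\mathrm{ht},\lambda}}(A)$ we have $A\subseteq A_0\subseteq\cl_{\P_{\mathrm{ht},\lambda}}(A)$. Applying the monotone, idempotent operator $\cl_{\P_{\mathrm{ht},\lambda}}$ yields
$$\cl_{\P_{\mathrm{ht},\lambda}}(A)\subseteq\cl_{\P_{\mathrm{ht},\lambda}}(A_0)\subseteq\cl_{\P_{\mathrm{ht},\lambda}}\big(\cl_{\P_{\mathrm{ht},\lambda}}(A)\big)=\cl_{\P_{\mathrm{ht},\lambda}}(A),$$
so $\cl_{\P_{\mathrm{ht},\lambda}}(A_0)=\cl_{\P_{\mathrm{ht},\lambda}}(A)$ and hence $a_0\in\cl_{\P_{\mathrm{ht},\lambda}}(A)$. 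On the other hand, $A_0\subseteq\dcl(A,a)$ gives $a_0\in\dcl(A_0,a)\subseteq\dcl(A,a)$. Therefore $a_0\in\dcl(A,a)\cap\cl_{\P_{\mathrm{ht},\lambda}}(A)=A_0\subseteq\acl(A_0)$, contradicting the choice of $a_0$. The only delicate point is that $A_0$ may be infinite, so a naive attempt to transfer hereditarily $\lambda$-triviality from $A_0$ down to $A$ by peeling off parameters one at a time via Lemmas~\ref{L:Her1} and~\ref{L:Her2} need not terminate; the idempotence of the closure operator $\cl_{\P_{\mathrm{ht},\lambda}}$ is exactly what sidesteps this, and it is the analogue of the role played by $\wU(A_0/A)=0$ in Lemma~\ref{L:Decomp}.
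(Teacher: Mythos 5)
Your proof is correct and matches the paper's intended argument: the paper offers no explicit proof beyond the remark that one proceeds ``similarly as in Lemma~\ref{L:Decomp}'', and your adaptation is exactly that, with the rank computation $\wU(a_0/A)\le\wU(a_0/A_0)\oplus\wU(A_0/A)=0$ replaced by monotonicity and idempotence of $\cl_{\P_{\mathrm{ht},\lambda}}$, which the paper has just asserted to be a closure operator. Your closing observation explaining why idempotence (rather than a term-by-term transfer via Lemmas~\ref{L:Her1} and~\ref{L:Her2} over the possibly infinite $A_0$) is the right substitute for the step $\wU(A_0/A)=0$ is an accurate and worthwhile clarification.
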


Now, we focus our attention to the family of hereditarily $\omega$-trivial types, which contains $\P_0$ as it is shown in the next lemma.

\begin{lemma}\label{L:Rank0HT}
Let $\bar a$ be a possibly infinite tuple  and $a$ a finite tuple such that $\bar a$ is contained in $\acl(a)$. If a type $p=\tp(\bar a/A)$ has $\wU$-rank zero, then it is hereditarily $k$-trivial for some natural number $k$.
\end{lemma}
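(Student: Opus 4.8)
The plan is to show that a single natural number $k$ does everything, namely one for which $\tp(\bar a/A\bar a)$ does not $k$-fork over $A$. Such a $k$ exists: since $\wU(p)=0$, the type $p$ has no $\omega$-forking extension, so its extension $\tp(\bar a/A\bar a)$ does not $\omega$-fork over $A$, that is $\bar a\wind_A\bar a$, and $\omega$-forking is $k$-forking for every $k$. The crucial observation is a rigidity of the type $\tp(\bar a/A\bar a)$: as every entry of $\bar a$ belongs to its parameter set $A\bar a$, the tuple $\bar a$ is the \emph{unique} realization of $\tp(\bar a/A\bar a)$; hence for any formula $\psi(\bar x)$ at all --- with parameters anywhere in $\M$ --- one has $\tp(\bar a/A\bar a)\vdash\psi(\bar x)$ as soon as $\models\psi(\bar a)$. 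Consequently $\tp(\bar a/M)$ does not $k$-fork over $A$ for \emph{every} $M\supseteq A$: were it to, some $\psi(\bar x)\in\tp(\bar a/M)$ would skew $k$-divide over $A$, and then $\tp(\bar a/A\bar a)\vdash\psi(\bar x)$ would contradict the choice of $k$. Finally, by Remark \ref{R:1}(1) (skew $k$-dividing over a larger base implies it over any smaller one), $\tp(\bar a/M)$ does not $k$-fork over $B$ for every $B$ with $A\subseteq B\subseteq M$.

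With this in hand I check hereditary $k$-triviality directly. Fix $B\supseteq A$, a $B$-independent sequence $I$, and a realization of $p$; since the realizations of the complete type $p$ are all conjugate over $A$, I may take it to be $\bar a$. Working inside a sufficiently saturated $M\supseteq B\cup I$, the previous paragraph gives that $\tp(\bar a/M)$ does not $k$-fork over $B$. Now I repeat the Morley--array trick from the proof of Proposition \ref{P:Equiv1}, $(2)\Rightarrow(3)$: I pass to a Morley array over $B$ with diagonal $I$, whose rows $(f_t)_t$ form a $B$-indiscernible set (a Morley sequence in $\bigotimes_s\stp(e_s/B)$), with the rows mutually independent because $I$ is $B$-independent. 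Applying Lemma \ref{L:SD-Indisc} with base $B$ to the $B$-indiscernible sequence $(f_t)_t$ yields $J_0$ with $|J_0|<k$ such that $(f_t)_{t\notin J_0}$ is an indiscernible set over $B\cup\{f_t:t\in J_0\}\cup\bar a$; still being a Morley sequence over this set, it is independent from it, so $\bar a\cup\{f_t:t\in J_0\}\ind_B(f_t)_{t\notin J_0}$, and restricting to the diagonal gives $\bar aJ\ind_B I\setminus J$ with $J=\{e_t:t\in J_0\}$ and $|J|<k$. As $B$ and $I$ were arbitrary, $p$ is hereditarily $k$-trivial.

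The main obstacle I foresee is purely bookkeeping around the possibly infinite length of $\bar a$, since Lemma \ref{L:SD-Indisc} and Proposition \ref{P:Equiv1} are phrased for finite tuples --- and this is exactly where the hypothesis $\bar a\subseteq\acl(a)$ with $a$ finite is used, keeping us in the finite-weight regime (a hereditarily $k$-trivial type has weight $<k$) and letting one argue through finite subtuples. Concretely, every finite $\bar a_0\subseteq\bar a$ satisfies $\wU(\bar a_0/A)\le\wU(p)=0$, and the $k$ fixed above still witnesses that $\tp(\bar a_0/A\bar a_0)$ --- a restriction of a reduct of $\tp(\bar a/A\bar a)$ --- does not $k$-fork over $A$, so the first paragraph applies uniformly to all finite subtuples; moreover the construction in the proof of Lemma \ref{L:SD-Indisc} never involves more than finitely many coordinates of $\bar a$ at a time, so its terminal step delivers indiscernibility of $I\setminus J$ over $B\cup J\cup\bar a_0$ simultaneously for all finite $\bar a_0\subseteq\bar a$, hence over $B\cup J\cup\bar a$. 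The remaining ingredients --- that a Morley sequence which stays indiscernible over a larger parameter set is independent from it, and that hereditary $k$-triviality need only be verified at $\bar a$ --- are standard.
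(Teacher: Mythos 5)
Your proof is correct and its skeleton is the same as the paper's: produce a single $k$ with $\tp(\bar a/M)$ non-$k$-forking over $B$, then run the Morley-array argument of Proposition \ref{P:Equiv1}, $(2)\Rightarrow(3)$. Two points of divergence are worth recording. First, your rigidity observation --- that $\tp(\bar a/A\bar a)$ entails every formula true of $\bar a$, with parameters anywhere, so non-$k$-forking of $\tp(\bar a/A\bar a)$ over $A$ propagates to $\tp(\bar a/M)$ for every $M$ --- is an explicit uniformity argument that the paper's proof does not spell out: there, $k$ is extracted from $a'\wind_A M$ only after $B$, $I$ and $M$ have been fixed, so as written it could depend on them, whereas the definition of hereditary $k$-triviality needs one $k$ for all $B$ and $I$; your first paragraph is exactly what closes that gap (equivalently: take the least $k$ witnessing $a\wind_A a$ and note that it is $\Aut(\M/A)$-invariant). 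Second, the paper uses the hypothesis $\bar a\subseteq\acl(a)$ differently and more cleanly: it picks a finite $a'\equiv_{A\bar a}a$ with $a'\ind_{A\bar a}M$, so that $a'\wind_A M$ by transitivity, applies the finite-tuple Proposition \ref{P:Equiv1} to $a'$ verbatim, and then transfers $I\setminus J\ind_B Ja'$ to $I\setminus J\ind_B J\bar a$ via $\bar a\subseteq\acl(a')$. Your alternative --- rerunning Lemma \ref{L:SD-Indisc} on the infinite tuple --- does work, because that construction is formula-by-formula and its termination is governed by non-$k$-forking of the full type $\tp(\bar a/M)$; but the phrasing ``simultaneously for all finite $\bar a_0$'' is the weak spot: applying the lemma separately to each finite subtuple would yield a $J$ depending on $\bar a_0$, and you must instead run the construction once for $\bar a$ itself and read off indiscernibility over $BJ\bar a$ finite formula by finite formula. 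The paper's $a'$ device sidesteps this bookkeeping entirely and is the genuine place where finiteness of $a$ enters.
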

\begin{proof}
Suppose that $p=\tp(\bar a/A)$ has $\wU$-rank $0$, where $\bar a\subseteq\acl(a)$ and $a$ is a finite tuple. Let $B$ and $I$ be as in the definition of hereditarily $\omega$-trivial and consider an $(|A|+|T|)^+$-saturated model $M$ containing $B,I$ and $a$. Since 
$$\wU(\bar a/M) \le \wU( \bar a/A )=0
$$ 
we have that $\bar a\wind_A M$. Now, let $a'$ be a finite tuple such that $a'\equiv_{A\bar a} a$ with $a'\ind_{A\bar a} M$ and so $a'\wind_A M$ by transitivity. It then follows that $\tp(a'/M)$ does not $k$-fork over $B$ for some natural number $k$ by definition. Hence, Proposition \ref{P:Equiv1} yields the existence of a subset $J$ of $I$ with $|J|<k$ such that $I\setminus J$ is independent from $Ja'$ over $B$. Whence, as $\bar a\subseteq \acl(a')$ by invariance, we get that  $I\setminus J$ is independent from $J\bar a$ over $B$ and so $p=\tp(\bar a/A)$ is hereditarily $k$-trivial.
\end{proof}

\begin{prop}\label{P:NotForeign}
If a type is not foreign to $\P_{\mathrm{ht},\omega}$, then it dominates an hereditarily $\omega$-trivial type and it is non orthogonal to a type of weight one.
\end{prop}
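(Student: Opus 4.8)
The plan is to reduce via internality to a hereditarily $\omega$-trivial type, read off the domination statement, and then obtain the weight-one non-orthogonality by an induction on weight that stays inside $\P_{\mathrm{ht},\omega}$. Concretely, let $p=\stp(a/A)$ be the given type. Since $p$ is not foreign to $\P_{\mathrm{ht},\omega}$, Fact~\ref{F:Int} yields an imaginary $a_0\in\dcl(Aa)\setminus\acl(A)$ with $\stp(a_0/A)$ internal to $\P_{\mathrm{ht},\omega}$, and then $q:=\stp(a_0/A)$ is itself hereditarily $\omega$-trivial by Proposition~\ref{P:AnHT}; moreover it is non-algebraic. As $a_0\in\dcl(Aa)$, every nonforking extension $\tp(a/AC)$ of $p$ carries $a_0$ along, so $a\ind_A C$ implies $a_0\ind_A C$, i.e.\ $p$ dominates $q$. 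This is the first assertion, and since $q$ is non-algebraic it also follows that $p$ is non-orthogonal to $q$.

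For the second assertion, recall that a hereditarily $\omega$-trivial complete type has finite weight (the remark after the definition of hereditary $\lambda$-triviality), and that if $p$ dominates $q$ and $q$ is non-orthogonal to $r$ then $p$ is non-orthogonal to $r$. Hence it suffices to prove the following: \emph{every non-algebraic hereditarily $\omega$-trivial stationary type is non-orthogonal to a type of weight one}; applied to $q$ this produces a weight-one type non-orthogonal to $q$, hence to $p$. In the argument I would use freely that $\P_{\mathrm{ht},\omega}$ is closed under adding parameters to the type (immediate from the definition) and that $\tp(c/B)$ is hereditarily $\omega$-trivial whenever $\tp(a/B)$ is and $c\in\dcl(Ba)$ (the internal case of Proposition~\ref{P:AnHT}).

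The claim would be proved by induction on $n:=w(q)$, the case $n=1$ being trivial. For $n\ge 2$, pass to a suitably saturated base $B\supseteq A$ over which $q$ has a realization $a_0$ with $w(\tp(a_0/B))=n$, witnessed by a $B$-independent tuple $b_1,\dots,b_n$ along which $a_0$ forks. Let $c$ be a finite tuple of $\cb(\stp(b_1\dots b_n/Ba_0))$ not algebraic over $B$ — one exists since this type forks over $B$ — and set $q_1:=\tp(c/B)$; then $q_1$ is non-algebraic, hereditarily $\omega$-trivial (as $c\in\dcl(Ba_0)$), and dominated by $q|B$. The inductive hypothesis applied to $q_1$, together with this domination, would then yield a weight-one type non-orthogonal to $q$.

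The step I expect to be the main obstacle is verifying the strict drop $w(q_1)<n$ while keeping $q_1$ dominated by $q$: morally one of the $n$ weight directions is absorbed, since $b_1\dots b_n$ becomes independent from $a_0$ over $Bc$, but making this precise seems to need a careful choice of the weight-witnessing configuration (a Morley sequence in general position) and a check that passing to the saturated base $B$ does not inflate the weight. This is also the point where hereditary $\omega$-triviality does work beyond merely bounding the weight: every type produced in the reduction is again hereditarily $\omega$-trivial, hence of finite weight, so the induction is well-founded — whereas a generic finite-weight type might reduce to one of infinite weight and the argument would stall. Alternatively, once $q$ is known to be hereditarily $\omega$-trivial one could instead invoke the standard implication that a (locally) strong type is non-orthogonal to a type of weight one.
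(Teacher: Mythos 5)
Your first paragraph is exactly the paper's argument: Fact \ref{F:Int} produces $a_0\in\dcl(Aa)\setminus\acl(A)$ with $\stp(a_0/A)$ internal to $\P_{\mathrm{ht},\omega}$, Proposition \ref{P:AnHT} upgrades internality to hereditary $\omega$-triviality, and the domination claim is immediate from $a_0\in\dcl(Aa)$. For the second assertion the paper does precisely what you relegate to your closing sentence: a hereditarily $\omega$-trivial type has finite weight, and a finite-weight type is non-orthogonal to a type of weight one by Hyttinen's theorem, cited as \cite[Proposition 5.6.6]{Bue}; nothing more is needed. Your primary route --- an induction on weight carried out inside $\P_{\mathrm{ht},\omega}$ --- is in effect an attempt to reprove Hyttinen's theorem from scratch, and the step you yourself flag as the obstacle (the strict drop $w(q_1)<n$ while keeping $q_1$ non-algebraic and dominated) is exactly the crux of that theorem's proof, so as written that branch is not a complete argument. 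Note also that the feature you hoped hereditary $\omega$-triviality would buy you --- that every type produced in the reduction again has finite weight --- already comes for free from domination: $c\in\acl(Ba_0)$ gives $w(c/B)\le w(a_0/B)<\omega$, so the closure properties of $\P_{\mathrm{ht},\omega}$ add nothing at that point. In short: keep your first paragraph and your final sentence, and drop the induction.
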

\begin{proof}
Let $p=\tp(a/A)$ be a type which is not foreign to $\P_{\mathrm{ht},\omega}$. Thus, there exists some element $a_0\in \dcl(A,a)\setminus \acl(A)$ such that $\tp(a_0/A)$ is internal to $\P_{\mathrm{ht},\omega}$ by Fact \ref{F:Int}. Hence, by Proposition \ref{P:AnHT} the latter type is indeed hereditarily $\omega$-trivial and clearly it is dominated by $\tp(a/A)$. 

Now, as $\tp(a_0/A)$ has finite weight it is non-orthogonal to a type of weight one by a result of Hyttinen, see \cite[Proposition 5.6.6]{Bue}. Hence, the type $\tp(a/A)$ is also non-orthogonal to a type of weight one.
\end{proof}

\subsection{Flatness and finite weight} Now, we are ready to prove that a flat theory is strong, {\it i.e.} every type has finite weight. In fact, we obtain a local version of this.

\begin{theorem}\label{T:Strong}
A finitary type $p$ with $\wU(p)<\infty$ has finite weight and therefore it is non-orthogonal to a type of weight one.
\end{theorem}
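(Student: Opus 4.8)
The strategy is to reduce to the two cases already handled by the machinery of Section 3, according to whether $p$ is foreign to $\P_0$ or not. Formally, write $p = \tp(a/A)$ with $\wU(p) = \gamma < \infty$, and proceed by induction on $\gamma$. Set $A_0 = \dcl(Aa) \cap \cl_{\P_0}(A)$; by Lemma \ref{L:Decomp} the type $q_0 = \stp(a/A_0)$ is foreign to $\P_0$ and has the same $\wU$-rank $\gamma$ as $p$. Since $a \ind_A A_0$ would force (via Lemma \ref{L:Her1}) the weight of $p$ to be controlled by that of $q_0$ together with the weight of $\tp(A_0/A)$ — and the latter is a $\wU$-rank-zero type, hence hereditarily $k$-trivial for some finite $k$ by Lemma \ref{L:Rank0HT} — it suffices to bound the weight of $q_0$. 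Wait: more carefully, $\tp(a/A)$ is dominated by $\tp(aA_0/A)$, and by Lemma \ref{L:Her2}-style reasoning the weight of $\tp(aA_0/A)$ is at most $w(q_0) + w(\tp(A_0/A))$; the second summand is finite by Lemma \ref{L:Rank0HT} and Proposition \ref{P:AnHT}. So the problem genuinely reduces to the foreign-to-$\P_0$ case.

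So assume $p$ itself is foreign to $\P_0$, equivalently (Proposition \ref{P:Min-For}) $p$ is $\omega$-minimal. Now split on the shape of $\wU(p)$. If $\wU(p) = \omega^\alpha$ is monomial, then $p$ is regular by Remark \ref{R:Reg}, hence has weight one and there is nothing more to prove. In general, write $\wU(p)$ in Cantor normal form $\omega^{\alpha_1} n_1 + \cdots + \omega^{\alpha_m} n_m$; I would peel off the leading term and apply Theorem \ref{T:Main2}: taking $\beta$ to be the tail $\omega^{\alpha_1}(n_1-1) + \omega^{\alpha_2}n_2 + \cdots$ when $n_1 > 1$, or $\omega^{\alpha_2}n_2 + \cdots$ when $n_1 = 1$ (and $\beta = 0$ when $p$ is monomial), the hypotheses $\beta \ge \omega^{\alpha+1}$ or $\beta = 0$ are met with $\alpha = \alpha_1$ and $n = n_1$. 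Since $p$ is foreign to $\P_0$, Corollary \ref{C:Reg} applies and $p$ is non-orthogonal to an $\omega$-minimal regular type $r$ of $\wU$-rank $\omega^{\alpha_1}$. Being non-orthogonal to $r$, which has weight one, already gives the final clause of the theorem; but to get finite weight I need an inductive step.

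For the finite-weight conclusion in the non-monomial case, the plan is: since $p$ is non-orthogonal to the regular type $r$, there is a forking extension direction along which the $\wU$-rank drops by exactly $\omega^{\alpha_1}$. Concretely, choose $b$ realizing $r$ over a suitable base extension so that $\wU(a/Ab) = \wU(a/A) \ominus \omega^{\alpha_1}$ (the Cantor normal form with one fewer copy of $\omega^{\alpha_1}$), arrange the canonical base as in the proof of Theorem \ref{T:Main2}, and observe $\wU(\tp(ab/A)) = \wU(a/Ab) + \omega^{\alpha_1}$ lies strictly below $\gamma$ only on the $a$-coordinate while $b$ contributes a weight-one piece. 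Then $w(p) \le w(\tp(a/Ab)) + w(r) = w(\tp(a/Ab)) + 1$, and $\tp(a/Ab)$ has strictly smaller $\wU$-rank, so by the induction hypothesis it has finite weight; hence so does $p$. The last sentence of the theorem then follows from Hyttinen's result (\cite[Proposition 5.6.6]{Bue}) that a finite-weight type is non-orthogonal to a weight-one type, exactly as at the end of Proposition \ref{P:NotForeign}.

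The main obstacle I anticipate is the weight bookkeeping across domination: showing cleanly that $w(\tp(a/A)) \le w(\tp(a/Ab)) + 1$ when $b$ realizes a regular type to which $p$ is non-orthogonal, and that passing from $p$ to $q_0 = \stp(a/A_0)$ (absorbing a $\wU$-rank-zero, hereditarily finitely-trivial piece) changes the weight only by a finite amount. Both are subadditivity-of-weight statements that need the hereditary-triviality lemmas (Lemmas \ref{L:Her1}, \ref{L:Her2}, \ref{L:Rank0HT}) rather than just the classical additivity of weight, because the $\P_0$-part need not be of finite weight for the naive reason — it is finite weight precisely because it is hereditarily $k$-trivial. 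Getting the induction to close — i.e. confirming that the auxiliary type $\tp(a/Ab)$ really does have $\wU$-rank strictly below $\gamma$ and that its weight plus one bounds $w(p)$ — is where the real work lies; everything else is assembly of results already proved.
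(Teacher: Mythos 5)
Your proposal follows essentially the same route as the paper's proof: induction on $\wU$-rank, reduction to the foreign-to-$\P_0$ case via Lemma \ref{L:Decomp} (adding back the finite weight of the rank-zero part using Lemma \ref{L:Rank0HT}), Corollary \ref{C:Reg} to produce a non-orthogonal regular type, and subadditivity of weight to close the induction. The two ``obstacles'' you flag are in fact already covered by results in the paper: $\wU(a/Cb)<\wU(a/C)$ is immediate from $\omega$-minimality of $\tp(a/C)$ (Proposition \ref{P:Min-For} and Lemma \ref{L:ExtMin}) once $a\nind_C b$ over the auxiliary base $C$ witnessing non-orthogonality, and $w(a/C)\le w(ab/C)\le w(a/Cb)+w(b/C)=w(a/Cb)+1$ is monotonicity plus standard subadditivity of weight --- the paper performs the same bookkeeping, merely routing through an imaginary $a_0\in\dcl(\cb(b/C,a))\cap\acl(C,a)$ of positive $\wU$-rank and finite weight in place of $b$ itself.
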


\begin{proof}
We proceed by induction on the $\wU$-rank of the type. The case of $\wU$-rank $0$ follows by Lemma \ref{L:Rank0HT} and the fact that an hereditarily trivial type has finite weight. 

Now, let $p\in S(\emptyset)$ be a finitary type and assume that $\wU(p) = \beta +\omega^\alpha \cdot n$ with $n>0$ and $\beta \ge \omega^{\alpha+1}$ or $\beta=0$. Let $a$ be a realization of $p$ and set $A = \dcl(a)\cap {\rm cl}_{\mathbb P_0}(\emptyset)$. By Lemma \ref{L:Decomp} the type $\stp(a/A)$ is foreign to $\mathbb P_0$ and $\wU(a/A) = \wU(p)$. Thus, applying Corollary \ref{C:Reg} we can find an $\omega$-minimal  regular type $q$ which is non-orthogonal to $\tp(a/A)$. Let $C$ and $b$ be such that $a\ind_A C$ and $b\models q|C$ with $a\nind_C b$. Note that the latter implies the existence of some imaginary element 
$$
a_0\in \dcl(\cb(b/C,a))\setminus \acl(C).
$$
Thus  $a_0\in\acl(C,a)$ and also $a_0\in\dcl(b_0,\ldots,b_m)$ for some initial segment $b_0,\ldots,b_m$ of a Morley sequence in $\stp(b/C,a)$. Hence, we then have that 
$$
w(a_0/C) \le w(b_0,\ldots,b_m/C ) \le m, 
$$
since $\tp(b/C) = q|C$ is regular and so of weight $1$. 

Since $a\ind_A C$, the type $\tp(a/C)$ is also foreign to $\mathbb P_0$ and thus $\wU(a_0/C)>0$, as $a\nind_C a_0$ by the choice of $a_0$. Hence, by the Lascar inequality 
$$
\wU(a/C,a_0) + \wU(a_0/C ) \le \wU(a,a_0/C) = \wU(a/C)
$$
we then have that $\wU(a/C,a_0)<\wU(a/C)$. Therefore, putting al together we get
$$
w(a/A) = w(a/C) = w(a,a_0/C) \le w(a/C,a_0) + w(a_0/C) <\omega,
$$
since by induction the type $\tp(a/C,a_0)$ has finite weight. 

Finally, notice that $\tp(A)$ has $\wU$-rank zero, since $A\subseteq \cl_{\P_0}(\emptyset)$. As $a$ is finite and $A\subseteq\dcl(a)$, using Lemma \ref{L:Rank0HT} we then see that $\tp(A)$ has finite weight, which yields that $\tp(a)$ has also finite weight, since 
$$
w(a) = w(a,A) \le w(a/A) + w(A).
$$
This finishes the first part of the statement. For the second, it suffices to notice as before that a type of finite weight is non-orthogonal to a type of weight one.
\end{proof}

As an immediate consequence we obtain:

\begin{cor}
Any flat theory is strong.
\end{cor}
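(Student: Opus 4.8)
The final statement to prove is the Corollary: \emph{Any flat theory is strong.} The plan is to derive this directly from Theorem \ref{T:Strong}, which asserts that any finitary type $p$ with $\wU(p)<\infty$ has finite weight and is non-orthogonal to a type of weight one. Recall that a theory is \emph{strong} precisely when every (finitary) type has finite weight. So the only gap between Theorem \ref{T:Strong} and the corollary is the observation that in a flat theory, every finitary complete type $p$ satisfies $\wU(p)<\infty$.

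First I would invoke the characterization of flatness recorded in the proposition just after the definition of the $\wU$-rank: a stable theory is flat if and only if $\wU(p)<\infty$ for every finitary complete (real) type $p$. (Equivalently, one can argue directly: by definition of flatness, for a finite tuple $a$ and any set $A$ there is a finite $A_0\subseteq A$ with $a\wind_{A_0} A$, and by local character of $\wind$ together with the fact that $\wU$-ranks form an initial segment of the ordinals bounded by $(2^{|T|})^+$, one sees the foundation rank is ordinal-valued on all finitary types.) Either way, in a flat theory every finitary type has ordinal $\wU$-rank.

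Then I would simply apply Theorem \ref{T:Strong}: since $\wU(p)<\infty$ for every finitary type $p$, each such $p$ has finite weight. By definition this means the theory is strong. I would also note, for completeness, that Theorem \ref{T:Strong} additionally gives that every type is non-orthogonal to a type of weight one, so in a flat theory this stronger conclusion holds as well, though it is not needed for the bare statement that flat theories are strong.

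I do not expect any genuine obstacle here: the corollary is a one-line deduction, and all the real content was already carried out in the proof of Theorem \ref{T:Strong} (the induction on $\wU$-rank, the use of the decomposition lemma, Corollary \ref{C:Reg}, and the hereditarily trivial machinery). The only thing to be careful about is to cite the correct equivalence (flatness $\Leftrightarrow$ all finitary real types have ordinal $\wU$-rank) and to make sure the notion of "strong" being used is exactly "every type has finite weight," which is consistent with the terminology set up in the introduction.

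\begin{proof}
By the proposition characterizing flatness, in a flat theory every finitary complete real type $p$ satisfies $\wU(p)<\infty$. Hence, by Theorem \ref{T:Strong}, every such type has finite weight, which is precisely the assertion that the theory is strong.
\end{proof}
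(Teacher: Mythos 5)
Your proposal is correct and matches the paper's intent exactly: the corollary is stated there as an immediate consequence of Theorem \ref{T:Strong}, obtained by combining it with the characterization that a theory is flat if and only if every finitary complete type has ordinal $\wU$-rank. No further comment is needed.
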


In the light of these results, it seems reasonable to ask the following:

\begin{quest}
In a flat theory, is every type non-orthogonal to a regular type? Or, is every type hereditarily $1$-trivial type non-orthogonal to a regular type? 
\end{quest}

\section{Flat groups}
In this final section we describe the structure of type-definable groups in flat theories. It turns out that this resemblances to the structure of a superstable group, since in the framework of groups we can find enough regular types.  More precisely, we  recover \cite[Corollary 5.3]{Udi2} where a superstable group $G$ is shown to admit a normal series of definable subgroups
$$
G=G_0\unrhd G_1 \unrhd \dots \unrhd G_m \unrhd \{1\}
$$
such that each group $G_i/G_{i+1}$ is $p_i$-semi-regular for some regular type $p_i$. We refer the reader to \cite[Chapter 7]{Pil} for the general theory of $p$-simplicity and semi-regularity; which was originally introduced (in a different way) in \cite[Chapter V]{SheClas}. We recall some of the basic definitions.

Fix a regular type $p$. Recall that a stationary type $q$ is said to be {\em hereditarily orthogonal} to $p$ if $p$ is orthogonal to any extension of $q$. A stationary type $q$ is {\em $p$-simple} if for some set $B$ with $p$ and $q$ based on $B$, there exist $c\models q|B$ and an independent sequence $I$ of realizations of $p|B$ such that $\stp(c/B,I)$ is hereditarily orthogonal to $p$. The type $q$ is $p$-semi-regular if it is $p$-simple and domination-equivalent to $p^{(n)}$. In fact, a $p$-simple type $q=\stp(a/A)$ is $p$-semi-regular if and only if $\tp(d/A)$ is not hereditarily orthogonal to $p$ for every $d\in\dcl(A,a)\setminus \acl(A)$, see \cite[Lemma 7.1.18]{Pil} for a proof. Finally, concerning groups, we say that a group is {\em $p$-simple} if some (any) generic type is $p$-simple, and it is {\em $p$-semi-regular} group if some (any) generic is $p$-semi-regular.

The main facts concerning $p$-simplicity for groups, which we recall bellow, are shown by Hrushovski in \cite{Udi2}, see also \cite[Lemma 7.4.7]{Pil} for a proof.

\begin{fact}
Let $G$ be a type-definable group and let $q\in S_G(\emptyset)$ be some generic type.
\begin{enumerate}
\item If $q$ is not foreign to an $\emptyset$-invariant family $\mathbb P$ of types, then there exists a relatively definable  normal subgroup $N$ of $G$ of infinite index such that $G/N$ is $\mathbb P$-internal.
\item If $q$ is non-orthogonal to a regular type $p$, then there exists a relatively definable normal subgroup $N$ of $G$ such that $G/N$ is $p$-simple (even $p$-internal), and that a generic type of $G/N$ is non-orthogonal to $p$.
\end{enumerate}
\end{fact}

Before proceeding to analyse flat groups, we first see that the $\wU$-rank behaves as the $\U$-rank for groups. Note that a generic type $p\in S_G(A)$ has maximal $\wU$-rank: If $\tp(h/A)$ is another type, then taking $g\models p|A,h$  we get
\begin{align*}
\wU(h/A) & = \wU(h/A,g) = \wU (gh/A,g) \le \wU(gh/A) \\ &= \wU(gh/A,h) = \wU(g/A,h)  = \wU(p),
\end{align*}
since $g\wind_A h$, $h\wind_A g$ and $gh\wind A,h$. We then set $\wU(G)$ to be the $\wU$-rank of some (any) generic type; note that {\it a priori} a type of maximal $\wU$-rank might not be generic. Similarly, we can define the $\wU$-rank of a coset space to be the $\wU$-rank of its generic type. More precisely, if $g$ is generic of $G$ over $A$ and $E(x,y)$ is the equivalence relation $x^{-1}y\in H$ for some relatively definable subgroup $H$ of $G$, then $\tp(g_E/A)$ is the generic for $G/H$ and moreover note that $\tp(g/A,g_E)$ is a generic for the coset $gH$. Thus, since $\wU(gH)=\wU(H)$, using the Lascar inequalities, we get
$$
\wU(H) + \wU(G/H) \le \wU(G) \le \wU(H) \oplus \wU(G/H).
$$

The following key fact is a generalization of Example \ref{E:1}.

\begin{lemma}\label{L:GroupHT}
A generic type of an infinite type-definable group is not hereditarily $k$-trivial for any natural number $k$. In particular, there is no hereditarily $k$-trivial partial type defining an infinite group. 
\end{lemma}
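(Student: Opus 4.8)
The plan is to exhibit, inside any infinite type-definable group $G$, an independent sequence $I$ together with a generic element whose relationship to $I$ cannot be localized to a subsequence of size $<k$, thereby contradicting hereditary $k$-triviality. The model for this is Example \ref{E:1}: in a vector space over a finite field, a sum $a=\sum_{x\in J}x$ of a large independent set $J$ is not independent from $I\setminus J'$ for any small $J'$, because from $a$ together with all but one element of $J$ one recovers the missing element. I would first reduce to the case where $G$ is abelian of bounded exponent by replacing $G$ by a suitable relatively definable section, or — more directly — work with generic elements $g_0,\dots,g_{n-1}$ of $G$ over a base $B$ forming a $B$-independent sequence, and consider their "partial products" $h_j = g_0\cdots g_j$. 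The key group-theoretic observation is that any one of $g_0,\dots,g_{n-1}$ lies in $\dcl$ of the product $h=g_0\cdots g_{n-1}$ together with the remaining $g_i$'s, so $h$ is "spread out" over the whole sequence.

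Concretely, I would fix a base set $B=\acl(B)$ over which the relevant generic type is based, pick a $B$-independent sequence $I=(g_i)_{i<n}$ of realizations of the generic type $q$ with $n$ much larger than $k$, and set $a=g_0g_1\cdots g_{n-1}$. The main step is to check: for every subset $J\subseteq I$ with $|J|<k$ (so $|I\setminus J|\ge n-k>1$), we have $aJ\nind_B I\setminus J$. Indeed, pick two elements $g_s,g_t\in I\setminus J$. Using genericity, $g_s$ is interalgebraic over $B$ with the element $g_0\cdots g_{n-1}$ together with $\{g_i: i\neq s\}$ — more carefully, one should arrange the product so that knowing $a$, $g_t$, and all $g_i$ with $i\notin\{s,t\}$ pins down $g_s$ up to finitely many choices (this is where one uses that in a group, from a product and all but one factor one recovers the last factor, up to the ambiguity caused by non-commutativity, which is harmless since we only need a forking, not definability). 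Hence $g_s\in\acl(B, aJ, (I\setminus J)\setminus\{g_s\})$ while $g_s\notin\acl(B,(I\setminus J)\setminus\{g_s\})$ by independence of $I$ and non-algebraicity of $q$; this gives $aJ\nind_B I\setminus J$. Since $a$ realizes the generic $q$ (a product of independent generics is generic), this shows $q$ is not hereditarily $k$-trivial.

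For the "in particular" clause: if $\pi$ were a hereditarily $k$-trivial partial type defining an infinite group $H$ (in the sense that $H$ is type-definable and every element of $H$ satisfies $\pi$, or $\pi$ isolates the generic), then $H$ has a generic type extending/concentrated on $\pi$; by Lemma \ref{L:Her1} and the downward stability of hereditary $\lambda$-triviality under the hypotheses there, that generic would be hereditarily $k'$-trivial for some $k'$, contradicting the first part. I would phrase this by noting that hereditary $k$-triviality of $\pi$ passes to any complete type extending it over a larger base (this is essentially immediate from the definition, restricting the independent sequence), in particular to a generic type of $H$.

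The main obstacle I anticipate is the non-commutativity: the clean "recover the missing summand" argument of Example \ref{E:1} uses the abelian group structure, and for a general type-definable group one must either (a) first descend to an abelian (or bounded-exponent abelian) section using the stable group machinery, or (b) argue directly that from a product $g_0\cdots g_{n-1}$ and all factors except $g_s$ one still gets a forking relation with $g_s$ — which is true because left and right translations are definable bijections, so $g_s\in\dcl(B, g_0,\dots,g_{s-1}, g_0\cdots g_{n-1}, g_{s+1},\dots,g_{n-1})$ after solving $g_s = (g_0\cdots g_{s-1})^{-1}\,(g_0\cdots g_{n-1})\,(g_{s+1}\cdots g_{n-1})^{-1}$. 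So in fact approach (b) works with literal equality and no algebraic ambiguity, which makes the proof go through uniformly; the only care needed is to make sure that $aJ$ together with $I\setminus J$ minus one element really does determine that one element, which the displayed solved equation handles provided the two "test" elements are chosen among the indices outside $J$.
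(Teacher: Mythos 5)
Your argument is correct and is essentially the paper's own proof: the paper likewise takes the product of $k+1$ independent realizations of the generic type, applies hereditary $k$-triviality to that product against the independent sequence of factors, and concludes that some factor outside $J$ is algebraic over a set from which it is independent, forcing the generic to be algebraic and $G$ finite. Your displayed identity $g_s=(g_0\cdots g_{s-1})^{-1}\,a\,(g_{s+1}\cdots g_{n-1})^{-1}$ simply makes explicit the ``recover the missing factor'' step that the paper leaves implicit, and your handling of the ``in particular'' clause (hereditary triviality of a partial type passes to any completion) matches the paper's.
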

\begin{proof} Let $G$ be an infinite type-definable group and suppose, towards a contradiction, that the principal generic $p\in S_G(\emptyset)$  of $G$ is hereditarily $k$-trivial. Now, let $(a_i)_{i<k+2}$ be an independent sequence of realizations of $p$ and set $a=\prod_{i<k+1} a_i$. As $a$ realizes $p$, by assumption there is some subset $J$ with $|J|<k$ such that $(a_i)_{i\in J} a \ind (a_i)_{i \not\in J}$. Thus, the definition of $a$ yields the existence of some $k\not\in J$ such that $a_k \in \dcl(a,(a_i)_{i\in J})$ and so $a_k$ is independent from itself. This implies that $p$ is an algebraic type and so $G$ is finite, a contradiction. 

The second part of the statement follows from the fact that if $G$ is type-defined by an hereditarily $k$-trivial type, then so is any generic.
\end{proof}
As a consequence, we then have by Lemma \ref{L:Rank0HT} that a group of $\wU$-rank zero must be finite. More generally, we obtain:
\begin{lemma}\label{L:GroupRank}
Let $G$ be a type-definable group and let $H$ be a relatively definable subgroup of $H$. Then
$\wU(G)=\wU(H)<\infty$ if and only if $G/H$ is finite.
\end{lemma}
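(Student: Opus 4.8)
The plan is to prove both implications using the Lascar-type inequalities for $\wU$-rank together with Lemma \ref{L:GroupHT} (via its consequence that a $\wU$-rank-zero group is finite). The inequality $\wU(H) + \wU(G/H) \le \wU(G) \le \wU(H) \oplus \wU(G/H)$ derived just before the statement will do most of the work.

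First I would prove the easy direction: suppose $G/H$ is finite. Then its generic type is algebraic, so $\wU(G/H)=0$. Plugging into the displayed inequality gives $\wU(H) \le \wU(G) \le \wU(H)$, hence $\wU(G)=\wU(H)$; this is an ordinal (in particular $<\infty$) precisely when $\wU(H)<\infty$, but in fact we want to conclude it outright, so I should note that if $\wU(G)=\infty$ then by the comments after the definition of $\wU$-rank it has a forking extension of $\wU$-rank $\infty$, and one can pass to a generic, which is impossible for a group whose quotient by $H$ is finite only if $H$ itself has $\wU$-rank $\infty$ — so really the cleanest phrasing is: $\wU(G)=\wU(H)$ always holds when $G/H$ is finite, and the statement asserts the equivalence under the blanket, so I would state it as ``$\wU(G)=\wU(H)<\infty$ iff $G/H$ finite'' and simply observe that finiteness of $G/H$ forces $\wU(G)=\wU(H)$, and separately that $\wU(H)<\infty$ is needed for the ``only if'' direction anyway. (In a flat theory $\wU(H)<\infty$ automatically, so this subtlety is cosmetic.)

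For the converse, assume $\wU(G)=\wU(H)<\infty$ and suppose for contradiction that $G/H$ is infinite. Consider the coset space $G/H$ as a type-definable homogeneous space with generic type $q=\tp(g_E/A)$ for a generic $g$ of $G$. The Lascar inequality gives $\wU(H) + \wU(G/H) \le \wU(G) = \wU(H) < \infty$, and since $\wU(H)<\infty$ we may cancel on the left (ordinal addition is left-cancellative) to obtain $\wU(G/H) = 0$. Now I want to say that an infinite coset space cannot have $\wU$-rank zero. The generic type $q$ of $G/H$ is interdefinable, over the relevant parameters, with the generic of the connected component acting, or more directly: $G/H^0$ (where $H^0$ is the connected component of $H$) is an infinite type-definable group acting on... — actually the cleanest route is: if $G/H$ is infinite then $G$ has an infinite type-definable quotient group, namely $G/N$ where $N$ is the stabilizer of the action on $G/H$ (or if $H$ is not normal, pass to the normal core, or use that $G$ acts transitively on the infinite set $G/H$ and invoke that a transitive group action on an infinite set forces the acting group to have a generic of positive rank). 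Then by Lemma \ref{L:Rank0HT} combined with Lemma \ref{L:GroupHT}, an infinite type-definable group cannot have $\wU$-rank zero, contradicting $\wU(G/H)=0$.

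The main obstacle I anticipate is the last step: making precise why ``$\wU(G/H)=0$ with $G/H$ infinite'' is contradictory when $G/H$ is a coset space rather than a group. The honest fix is to reduce to the group case. If $H \trianglelefteq G$ this is immediate since $G/H$ is itself a group and Lemma \ref{L:GroupHT} (plus Lemma \ref{L:Rank0HT}) applies directly. For general relatively definable $H$, I would replace $H$ by the largest relatively definable normal subgroup $N$ of $G$ contained in $H$ (the normal core, which exists and is relatively definable in a stable group by the chain condition one should have available, or one can work with $\bigcap_{g} gHg^{-1}$ of bounded intersection); then $[G:N] \ge [G:H]$ is infinite, $G/N$ is an infinite type-definable group, and $\wU(G/N) = 0$ would follow from $\wU(G/H)=0$ together with $\wU(H/N) \le \wU(G/H)=0$ and the Lascar inequality $\wU(H/N)+\wU(G/H) \ge$ ... — wait, I need $\wU(G/N) \le \wU(H/N) \oplus \wU(G/H)$, which is the Lascar inequality for the tower $N \le H \le G$ applied to generics, giving $\wU(G/N) \le 0 \oplus 0 = 0$. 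Then Lemma \ref{L:GroupHT} applied to the infinite group $G/N$ gives the contradiction. So the proof structure is sound; the only care needed is justifying existence and relative definability of the normal core, which follows from stability of the ambient theory.
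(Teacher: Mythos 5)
Your proof is correct and follows essentially the same route as the paper: the displayed Lascar inequalities for groups together with the fact (Lemmas \ref{L:Rank0HT} and \ref{L:GroupHT}) that a type-definable group of $\wU$-rank zero must be finite. The paper's one-line proof silently treats $G/H$ as if it were a group; your normal-core reduction for non-normal $H$ fills exactly that point, and the one assertion you leave unjustified, $\wU(H/N)\le\wU(G/H)$, is fine because the core $N$ is a finite intersection of conjugates of $H$ (by the stable chain condition), so $H/N$ embeds into a finite product of translates of $G/H$ and the Lascar inequalities give $\wU(H/N)=0$.
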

\begin{proof}
It is enough to use the above Lascar inequalities for groups and notice that by Lemma \ref{L:Rank0HT} a type-definable group has $\wU$-rank $0$ if and only if it is finite.
\end{proof}

\begin{cor}\label{C:DCC}
Let $G$ be a type-definable group with $\wU(G)<\infty$. Then, there is no infinite sequence of relatively definable subgroups, each having infinite index in its predecessor.
\end{cor}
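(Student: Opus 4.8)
The plan is to argue by contradiction, using the $\wU$-rank as an ordinal-valued invariant that must strictly decrease along such a chain. So suppose, for contradiction, that there is an infinite descending sequence of relatively definable subgroups
$$
G = G_0 \unrhd G_1 \unrhd G_2 \unrhd \dots
$$
in which each $G_{i+1}$ has infinite index in $G_i$. The first, purely bookkeeping, step is to observe that since each $G_i$ is relatively definable in $G$ and $G_{i+1} \subseteq G_i$, the group $G_{i+1}$ is also relatively definable \emph{in} $G_i$; hence Lemma \ref{L:GroupRank} is applicable to the pair $G_{i+1} \le G_i$ as soon as we know the relevant ranks are finite.

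Next I would bound the ranks. By the Lascar inequalities for groups recorded just before Lemma \ref{L:GroupHT}, $\wU(H) + \wU(G/H) \le \wU(G)$ for every relatively definable $H \le G$, so along the chain $\wU(G_i) \le \wU(G_{i-1}) \le \dots \le \wU(G) < \infty$; in particular every $\wU(G_i)$ is an ordinal. Applying Lemma \ref{L:GroupRank} to $G_{i+1} \le G_i$: as $\wU(G_i) < \infty$ and the index $[G_i : G_{i+1}]$ is infinite, we cannot have $\wU(G_i) = \wU(G_{i+1})$; together with $\wU(G_{i+1}) \le \wU(G_i)$ this forces the strict inequality $\wU(G_{i+1}) < \wU(G_i)$ for every $i$.

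Thus $(\wU(G_i))_{i<\omega}$ would be a strictly descending $\omega$-sequence of ordinals, which is impossible; this contradiction proves the corollary. I do not anticipate a real obstacle here: the substantive work has already been isolated in Lemma \ref{L:GroupRank} (which in turn rests on Lemma \ref{L:Rank0HT} and Lemma \ref{L:GroupHT}) and in the group version of the Lascar inequalities, and the corollary is just the well-foundedness of the ordinal rank applied to the resulting chain. The only point needing a moment's care is the inheritance of relative definability along the chain, so that Lemma \ref{L:GroupRank} may legitimately be invoked at each step.
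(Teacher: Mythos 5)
Your argument is correct and is exactly the intended one: the paper leaves the corollary without an explicit proof precisely because it follows from Lemma \ref{L:GroupRank} together with the group Lascar inequalities and the well-foundedness of the ordinals, which is what you carry out. The one point you flag for care (relative definability of $G_{i+1}$ in $G_i$ so that Lemma \ref{L:GroupRank} applies at each step) is handled correctly.
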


Now, we can obtain the semi-regular decomposition for flat groups.

\begin{theorem}\label{T:Group}
Let $G$ be a type-definable group with $\wU(G)<\infty$. Then, there exist finitely many regular types $p_0,\ldots,p_m$ and a series of relatively definable subgroups
$$
G=G_0 \unrhd G_1 \unrhd \dots \unrhd G_m \unrhd \{1\}
$$
such that each group $G_i/G_{i+1}$ is $p_i$-internal and $p_i$-semi-regular.
\end{theorem}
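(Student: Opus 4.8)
The plan is to prove Theorem \ref{T:Group} by induction on $\wU(G)$, using the results of the previous sections to locate, at each stage, a regular type non-orthogonal to the current top quotient and then quotienting by the subgroup produced by Fact 4.\ref{F:Group-fact} (the Hrushovski group fact). The base case is $\wU(G)=0$, which by Lemma \ref{L:GroupRank} (more precisely by the remark following it, since $\wU(G)=0$ iff $G$ is finite by Lemma \ref{L:Rank0HT}) means $G$ is finite, and then the trivial series $G \unrhd \{1\}$ works vacuously (or one takes $m=0$ with $p_0$ any regular type if one insists on a nonempty list; better: the statement is read as allowing the empty series when $G$ is finite).

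For the inductive step, assume $\wU(G)<\infty$ is nonzero and the theorem holds for all type-definable groups of strictly smaller $\wU$-rank. Let $q\in S_G(\emptyset)$ be a generic type. First I would argue that $q$ is \emph{not} foreign to $\P_0$: if it were, then by Proposition \ref{P:Min-For} the generic $q$ would be $\omega$-minimal, hence by Lemma \ref{L:GroupHT} together with Lemma \ref{L:Rank0HT}\dots\ actually the cleaner route is: since $\wU(G)<\infty$, Theorem \ref{T:Strong} gives that $q$ has finite weight, and by Proposition \ref{P:NotForeign}/the decomposition machinery $q$ is non-orthogonal to some type of weight one; but to get a \emph{regular} type I would instead invoke Corollary \ref{C:Reg}. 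Concretely: let $A_0 = \dcl(q)\cap \cl_{\P_0}(\emptyset)$-style base as in Lemma \ref{L:Decomp} so that $\stp(g/A_0)$ is foreign to $\P_0$ with the same $\wU$-rank $\wU(G)=\beta+\omega^\alpha n$; write this rank in the form required by Corollary \ref{C:Reg} (peel off the lowest Cantor monomial $\omega^\alpha$), obtaining that $\stp(g/A_0)$, hence $q$, is non-orthogonal to an $\omega$-minimal regular type $p_0$ of $\wU$-rank $\omega^\alpha$. Now apply Fact 4.\ref{F:Group-fact}(2): there is a relatively definable normal subgroup $N=G_1$ of $G$ such that $G/G_1$ is $p_0$-internal and a generic of $G/G_1$ is non-orthogonal to $p_0$; since $G/G_1$ is $p_0$-internal, $p_0$-internality forces (via \cite[Lemma 7.1.18]{Pil}, cited in the excerpt) that the generic of $G/G_1$ is in fact $p_0$-semi-regular — here I use that for a \emph{group}, $p_0$-internal plus generic non-orthogonal to $p_0$ upgrades to $p_0$-semi-regular, because every $d\in\dcl(A,g_{G/G_1})\setminus\acl(A)$ is itself generic of a definable coset space, hence non-orthogonal to $p_0$ by homogeneity, matching the criterion in \cite[Lemma 7.1.18]{Pil}. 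Thus $G_0/G_1 = G/G_1$ is $p_0$-internal and $p_0$-semi-regular.

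Next I would check that $\wU(G_1) < \wU(G)$. Since $G/G_1$ is $p_0$-internal and $p_0$ has $\wU$-rank $\omega^\alpha>0$, the quotient $G/G_1$ is infinite, so $G_1$ has infinite index in $G$; by Lemma \ref{L:GroupRank} this gives $\wU(G_1)\ne\wU(G)$, and since $\wU(G_1)\le\wU(G)$ (monotonicity of $\wU$ under restriction, or the Lascar inequalities for groups $\wU(G_1)+\wU(G/G_1)\le\wU(G)$ with $\wU(G/G_1)>0$), we get $\wU(G_1)<\wU(G)<\infty$. By the induction hypothesis applied to the type-definable group $G_1$, there are regular types $p_1,\ldots,p_m$ and a series $G_1 \unrhd G_2 \unrhd \dots \unrhd G_m \unrhd \{1\}$ with each $G_i/G_{i+1}$ being $p_i$-internal and $p_i$-semi-regular. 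Prepending $G=G_0\unrhd G_1$ yields the desired series for $G$, with the regular types $p_0,p_1,\ldots,p_m$. (One subtlety: the induction hypothesis produces subgroups relatively definable in $G_1$, which is itself relatively definable in $G$, so they are relatively definable in $G$ as well; and normality $G_{i+1}\unlhd G_i$ is exactly what the induction gives, so the chain is a genuine normal series in the sense stated — each term normal in its predecessor, not necessarily in $G$.)

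\textbf{The main obstacle.} The delicate point is the upgrade from ``$G/G_1$ is $p_0$-internal with generic non-orthogonal to $p_0$'' to ``$G/G_1$ is $p_0$-semi-regular,'' i.e.\ that a $p_0$-simple group whose generic is non-orthogonal to $p_0$ is actually $p_0$-semi-regular. This is where the group structure is essential (it is false for types in general). The argument: let $\bar q$ be the generic of $H:=G/G_1$ over a base $A$ over which $p_0$ is based and $H$ is defined; $\bar q$ is $p_0$-simple since $H$ is $p_0$-internal hence $p_0$-simple. By \cite[Lemma 7.1.18]{Pil} it suffices to show every $d\in\dcl(A,h)\setminus\acl(A)$, with $h\models\bar q|A$, has $\tp(d/A)$ non-orthogonal to $p_0$. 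Such a $d$ codes a generic element of some coset space $H/K$ for a relatively definable $K\le H$ (using that in a group every such $d$ is interdefinable over $A$ with the class of $h$ modulo the stabilizer of $\tp(d/Ah)$, which is relatively definable by stability), hence $\tp(d/A)$ is the generic of the infinite coset space $H/K$; were it orthogonal to $p_0$, then — running the same group-fact machinery, or directly the Lascar-inequality bookkeeping — the quotient $H/K$ would contribute no $p_0$-part and one would contradict $p_0$-internality of $H$ together with the non-orthogonality of $\bar q$ to $p_0$ (formally: $H$ being $p_0$-internal means $\wU(H)$ is ``all $p_0$'', and an orthogonal-to-$p_0$ infinite quotient would force a drop incompatible with that). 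I would cite \cite[Lemma 7.1.18]{Pil} and the relevant part of \cite{Udi2} / \cite[\S7]{Pil} for this upgrade rather than reprove it, since the excerpt explicitly points to these references for the theory of $p$-semi-regularity for groups; this keeps the proof short and isolates the one place where genuine input beyond the $\omega$-forking formalism is used.
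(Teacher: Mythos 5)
Your skeleton is the paper's: induct on $\wU(G)$, find a regular type non-orthogonal to the generic, apply Hrushovski's group fact (part (2)) to obtain a normal $G_1$ with $G/G_1$ $p_0$-internal, upgrade its generic to $p_0$-semi-regular, and recurse on $G_1$, whose rank drops by Lemma \ref{L:GroupRank}. What is missing is the single step on which everything else in the paper's proof rests: writing $\wU(G)=\beta+\omega^\alpha\cdot n$, one first shows that \emph{the generic of $G$ is foreign to the family of types of $\wU$-rank strictly less than $\omega^\alpha$}, using part (1) of the group fact --- otherwise there is a relatively definable normal $N$ of infinite index with $G/N$ internal to that family, so $\wU(G/N)<\omega^\alpha$ and the Lascar inequalities force $\wU(N)=\wU(G)$, contradicting Lemma \ref{L:GroupRank}. (You in fact set out to argue the opposite, that $q$ is \emph{not} foreign to $\P_0$, before abandoning that line.) Because you skip this, your first concrete gap appears immediately: you apply Corollary \ref{C:Reg} to $\stp(g/A_0)$ with $A_0=\dcl(g)\cap\cl_{\P_0}(\emptyset)$ and then assert ``hence $q$'' is non-orthogonal to the regular type $p_0$. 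If $A_0\not\subseteq\acl(\emptyset)$ then $\tp(g/A_0)$ is a \emph{forking} extension of $q$, and non-orthogonality of a forking extension to $p_0$ does not descend to $q$ --- that is precisely the gap between ``not hereditarily orthogonal'' and ``non-orthogonal'' --- yet you need $q$ itself non-orthogonal to $p_0$ to invoke the group fact. The foreignness claim repairs this: it forces $A_0\subseteq\acl(\emptyset)$, so Corollary \ref{C:Reg} applies to the generic directly.

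The second gap is in the semi-regularity upgrade. You argue that every non-algebraic $d\in\dcl(A,h)$ sits in a $p_0$-internal quotient and is therefore non-orthogonal to $p_0$. But $p_0$-internality of a non-algebraic type only produces non-orthogonality to \emph{some extension} of $p_0$, and since $p_0$ is regular its forking extensions are orthogonal to $p_0$; so internality alone does not deliver what \cite[Lemma 7.1.18]{Pil} requires. The paper closes exactly this hole with the same foreignness claim: forking extensions of the $\omega$-minimal $p_0$ have $\wU$-rank $<\omega^\alpha$, so $q$ --- and hence $q'$, which it dominates --- is orthogonal to every forking extension of $p_0$; consequently the extension $r=\tp(c_m/B,c_{<m})$ witnessing non-orthogonality of $\tp(d/A)$ must be a non-forking extension of $p_0$, hence non-orthogonal to $p_0$, and Lemma 7.1.18 applies. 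Your coset-space observation shows only that $\tp(d/A)$ is non-algebraic and $p_0$-internal; it does not substitute for this orthogonality-to-forking-extensions input. Restoring the foreignness step at the start of the inductive case fixes both gaps and essentially reproduces the paper's argument.
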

\begin{proof}
We proceed by induction on the $\wU$-rank. Assume that $\wU(G) = \beta +\omega^\alpha \cdot n$ for some ordinal $\beta \ge \omega^{\alpha+1}$ or $\beta =0$ and some $n\ge 0$. Moreover, since a group of $\wU$-rank zero is finite we may assume that $n>0$. 

We first claim that a generic type of $G$ is foreign to the set of types of $\wU$-rank strictly smaller than $\omega^\alpha$.  Otherwise, the previous fact yields the existence of a relatively definable normal subgroup $N$ of $G$ of infinite index such that $G/N$ is internal to the family of types of $\wU$-rank strictly smaller than $\omega^\alpha$. Thus, we then have that $\wU(G/N)<\omega^\alpha$ by the Lascar inequalities and so, the inequation 
$$
\beta+\omega^\alpha \cdot n = \wU(G)\le \wU(N) \oplus \wU(G/N)
$$
yields that $\wU(N)=\wU(G)$, a contradiction by Lemma \ref{L:GroupRank} since $G/N$ is infinite. Therefore, any generic type of  $G$ is foreign, and so orthogonal, to any type of $\wU$-rank strictly smaller than $\omega^\alpha$. In particular, it is foreing to $\mathbb P_0$ and consequently, by Corollary \ref{C:Reg}, some generic type $q$ of $G$ is non-orthogonal to an $\omega$-minimal regular type $p$ of $\wU$-rank $\omega^\alpha$. Thus, the second point of the previous fact yields the existence of a relatively definable normal subgroup $H$ such that $G/H$ is $p$-internal (so $p$-simple), and some generic type $q'$ of $G/H$ is non-orthogonal to $p$. 

Assume $p,q$ and $q'$ are stationary over $A$.  Since $p$ is $\omega$-minimal, any forking extension of $p$ has smaller $\wU$-rank and hence, the first part of the proof implies that $q$ is orthogonal to any forking extension of $p$. Whence, the same is true of $q'$ since $q$ dominates $q'$. Consequently, a standard argument (see the proof of \cite[Corollary 7.1.19]{Pil}) yields that $q'$ is $p$-semi-regular. Namely, as $q'$ is $p$-internal, there is some set $B$ containing $A$, some $c_1,\ldots,c_k$ realizing $p$ and some $a\models q'|B$ such that $a\in \dcl(B,c_1,\ldots,c_k)$. Fix some $d\in\dcl(a,A)\setminus \acl(A)$, and note then that there exists some $m\le k$ such that $d\ind_A B c_{<m}$ but $d\nind_{B,c_{<m}} c_{m}$. Setting $r=\tp(c_m/B,c_{<m})$, we clearly have that $\tp(d/A)$ is non-orthogonal to $r$ and then so is $q'=\tp(a/A)$, since $d\in\dcl(A,a)$. Thus, necessarily $r$ must be a non-forking extension of $p$, as $q'$ is orthogonal to any forking extension. This implies that $r$ is a regular type, as so is $p$, and moreover that $p$ and $r$ are non-orthogonal. Hence, we then have $\tp(d/A)$ is non-orthogonal to $p$, yielding that $q'$ is semi-regular by \cite[Lemma 7.1.18]{Pil}, say. Therefore, we have shown that $G/H$ is $p$-internal and $p$-semi-regular. 

Finally, as $G/H$ is infinite and so $\wU(H)<\wU(G)$ by Lemma \ref{L:GroupRank}, the inductive hypothesis applied to $H$ yields the statement.
\end{proof}

To finish the paper, a question:

\begin{quest}
Is there a flat non-superstable group?
\end{quest}

\end{document}